\newtheorem{theorem}{Theorem}[section]
\newtheorem{corollary}[theorem]{Corollary}
\newtheorem{claim}[theorem]{Claim}
\newtheorem{lemma}[theorem]{Lemma}
\newtheorem{proposition}[theorem]{Proposition}
\theoremstyle{definition}
\newtheorem{definition}[theorem]{Definition}
\newtheorem{remark}[theorem]{Remark}
\newtheorem{example}[theorem]{Example}
\newtheorem{thm}{Theorem}
\newcommand{\C}{\mathbb{C}}
\newcommand{\N}{\mathbb{N}}
\newcommand{\R}{\mathbb{R}}
\newcommand{\mcB}{\mathcal{B}}
\DeclareMathOperator{\Per}{Per}
\DeclareMathOperator{\rank}{rank}
\DeclareMathOperator{\Log}{Log}
\DeclareMathOperator{\var}{var}
\date{\today}
\title[\null]{Compactness of transfer operators and spectral representation of Ruelle zeta functions for super-continuous functions}
\author[\null]{Katsukuni Nakagawa}
\address{K.\ Nakagawa\\
Graduate School of Advanced Science and Engineering\\
Hiroshima University\\
Kagamiyama 1-3-1\\
Higashi-Hiroshima 739-8526\\
Japan}
\email{ktnakagawa@hiroshima-u.ac.jp}
\begin{document}

\subjclass[2010]{Primary: 37C30, 37B10; Secondary: 47B06.}

\keywords{Ruelle zeta functions, transfer operators, trace formula, spectral representation, super-continuous functions.}

\begin{abstract}
Transfer operators and Ruelle zeta functions for super-continuous functions on one-sided topological Markov shifts are considered. 
For every super-continuous function, we construct a Banach space on which the associated transfer operator is compact. 
Using this Banach space, we establish the trace formula and spectral representation of Ruelle zeta functions for a certain class of super-continuous functions.
Our results include, as a special case, the classical trace formula and spectral representation for the class of locally constant functions. 
\end{abstract}

\maketitle

\section{Introduction}
\label{sec:introduction}

Let $N\ge2$ be an integer and $\textbf{A}$ an $N\times N$ zero-one matrix. 
We say that $\textbf{A}$ is \emph{aperiodic} if there exists a positive integer $k$ such that all entries of $\textbf{A}^{k}$ are positive. 
In this paper, we always assume that $\textbf{A}$ is aperiodic.
We set
\[
\Sigma_{\textbf{A}}^{+}=\{\omega=(\omega_{m})_{m\in\N\cup\{0\}}\in\{1,\dots,N\}^{\N\cup\{0\}}:\textbf{A}(\omega_{m}\omega_{m+1})=1,\ m\in\N\cup\{0\}\}
\]
and equip $\Sigma_{\textbf{A}}^{+}$ with the product topology. 
Then, $\Sigma_{\textbf{A}}^{+}$ is a compact topological space. 
We define the \emph{shift map} $\sigma_{\textbf{A}}:\Sigma_{\textbf{A}}^{+}\to\Sigma_{\textbf{A}}^{+}$ by 
\[
(\sigma_{\textbf{A}}\omega)_{m}=\omega_{m+1},\quad m\in\N\cup\{0\}.
\]
Then, $\sigma_{\textbf{A}}$ is a continuous mapping. 
We call the dynamical system $(\Sigma_{\textbf{A}}^{+},\sigma_{\textbf{A}})$ a \emph{one-sided topological Markov shift}. 

For $\phi:\Sigma_{\textbf{A}}^+\to\C$ and $m\in\N\cup\{0\}$, we write
\begin{equation}
\label{def:080}
\var_m(\phi)=\sup_{\omega,\omega^{\prime}}|\phi(\omega)-\phi(\omega^{\prime})|,
\end{equation}
where the $\sup_{\omega,\omega^{\prime}}$ is taken over all $\omega,\omega^{\prime}\in\Sigma_{\textbf{A}}^+$ with $\omega_k=\omega^{\prime}_k,\ 0\le k\le m-1$. 
If $\var_m(\phi)^{1/m}\to0$ as $m\to\infty$, then we call $\phi$ a \emph{super-continuous function}. 
(This term is taken from \cite{Quas ETDS 2012}. See also Remark \ref{rem:072} below.)
We set
\[
V=\{\phi:\Sigma_{{\bf A}}^+\to\C:\phi\ \mbox{is a super-continuous function}\}.
\]
For $m\in\N\cup\{0\}$, we also set
\[
L_m=\{\phi\in V:\var_m(\phi)=0\}.
\]
We call an element of $\bigcup_{m\ge0}L_m$ a \emph{locally constant function} on $\Sigma_{{\bf A}}^+$. 
Note that $L_0$ is the set of constant functions and that $L_0\subset L_1\subset\cdots$.
Moreover, for $m\in\N\cup\{0\}$, $L_m$ is a finite-dimensional linear subspace of $V$ and $\dim L_m\le N^m$.
There exists a natural topology of $V$. 
Indeed, for $\theta\in(0,1)$, we define the metric $d_{\theta}$ on $\Sigma_{{\bf A}}^+$ by
\[
d_{\theta}(\omega,\omega^{\prime})=\theta^{m_0},\quad m_0=\min\{m\in\N\cup\{0\}:\omega_{m}\neq\omega^{\prime}_{m}\}
\]
and denote by $F_{\theta}$ the set of complex-valued $d_{\theta}$-Lipschitz continuous functions on $\Sigma_{\textbf{A}}^+$. 
Then, $V=\bigcap_{\theta\in(0,1)}F_{\theta}$ (see Lemma \ref{lem:405} below). 
We denote by $\|\cdot\|_{\theta}$ the Lipschitz norm with respect to the metric $d_{\theta}$.
We equip $V$ with the topology induced by the family of norms $\{\|\cdot\|_{\theta}\}_{\theta\in(0,1)}$. 

Let $f\in V$. 
The \emph{Ruelle transfer operator} $\mathscr{L}_f:V\to V$ of $f$ is defined as follows:
\[
(\mathscr{L}_{f}\phi)(\omega)=\sum_{\omega^{\prime}\in\Sigma_{\textbf{A}}^+:\,\sigma_{\textbf{A}}\omega^{\prime}=\omega}e^{f(\omega^{\prime})}\phi(\omega^{\prime}).
\]
We set
\[
\Lambda_f=\{\lambda\in\C\setminus\{0\}:\lambda\ \mbox{is an eigenvalue of}\ \mathscr{L}_{f}:V\to V\}.
\]
From \cite[Theorem 1]{Pollicott Invent 1986}, $\Lambda_f$ is a discrete subset of $\C\setminus\{0\}$ and each eigenvalue has finite multiplicity. 
Hence, the structure of $\Lambda_f$ is similar to that of the spectrum of a compact operator on a Banach space. 
In fact, if $f$ is locally constant, then there exists a Banach space $\mcB\subset V$ with $\mathscr{L}_f(\mcB)\subset\mcB$ such that $\mathscr{L}_f:\mcB\to\mcB$ is compact and the discrete structure of $\Lambda_f$ comes from the compactness. 
More precisely, the following assertion holds.

\begin{thm}[\text{\cite[Section 3]{Pollicott Invent 1986}}]
\label{thm:060}
\textit{
Let $m\ge2$ and $f\in L_m$. 
Then, $\mathscr{L}_f(L_{m-1})\subset L_{m-1}$ and $\Lambda_f=\sigma(\mathscr{L}_f:L_{m-1}\to L_{m-1})\setminus\{0\}$. Moreover, for $\lambda\in\Lambda_f$, the multiplicity of $\lambda$ as an eigenvalue of $\mathscr{L}_f:V\to V$ coincides with that as an eigenvalue of $\mathscr{L}_f:L_{m-1}\to L_{m-1}$.
}
\end{thm}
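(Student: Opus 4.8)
The plan is to exploit two facts: that $\mathscr{L}_f$ reduces by exactly one the depth to which a function depends on far-away coordinates, and that a super-continuous eigenfunction has variations decaying faster than any geometric rate; together these will force every eigenfunction with nonzero eigenvalue to lie in $L_{m-1}$. Concretely, I would first prove the contraction estimate
\[
\var_j(\mathscr{L}_f\psi)\le C\,\var_{j+1}(\psi)\qquad(\psi\in V,\ j\ge m-1),\qquad C:=N\max_{\omega\in\Sigma_{\textbf A}^+}\bigl|e^{f(\omega)}\bigr|,
\]
by a direct computation. If $\sigma_{\textbf A}\omega'=\omega$ then $\omega'=(a,\omega_0,\omega_1,\dots)$ for some symbol $a$ with $\textbf A(a\omega_0)=1$; and if $\omega,\tilde\omega\in\Sigma_{\textbf A}^+$ agree in coordinates $0,\dots,j-1$, then (using $j\ge m-1\ge1$, so that $\omega_0=\tilde\omega_0$) the admissible symbols $a$ are the same for both, the corresponding preimages agree in coordinates $0,\dots,j$, and $f$ takes the same value on them since $f\in L_m$ and $j+1\ge m$. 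Subtracting the two sums defining $(\mathscr{L}_f\psi)(\omega)$ and $(\mathscr{L}_f\psi)(\tilde\omega)$, factoring out the common weights $e^{f(a\omega)}$, and bounding each resulting difference by $\var_{j+1}(\psi)$ gives the estimate. Applying it with $j=m-1$ to $\psi\in L_{m-1}$, for which $\var_m(\psi)\le\var_{m-1}(\psi)=0$, immediately yields $\mathscr{L}_f(L_{m-1})\subset L_{m-1}$, the first assertion.

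The heart of the argument is the claim that \emph{if $\psi\in V$, $\lambda\in\C\setminus\{0\}$ and $\mathscr{L}_f\psi-\lambda\psi\in L_{m-1}$, then $\psi\in L_{m-1}$.} Since the variations are non-increasing in the index, an element of $L_{m-1}$ has $\var_j=0$ for every $j\ge m-1$ and hence does not affect $\var_j$; so for $j\ge m-1$ the contraction estimate gives
\[
|\lambda|\,\var_j(\psi)=\var_j(\lambda\psi)=\var_j(\mathscr{L}_f\psi)\le C\,\var_{j+1}(\psi),
\]
and iterating from $j=m-1$ gives $\var_{m-1}(\psi)\le (C/|\lambda|)^k\,\var_{m-1+k}(\psi)$ for all $k\ge0$. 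Fixing $\eta\in(0,|\lambda|/C)$ and using super-continuity of $\psi$ to get $\var_n(\psi)\le\eta^n$ for all large $n$, the right-hand side is eventually at most $\eta^{m-1}(\eta C/|\lambda|)^k$, which tends to $0$ as $k\to\infty$; hence $\var_{m-1}(\psi)=0$, that is, $\psi\in L_{m-1}$.

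Granting this claim, the rest is formal. Taking $\mathscr{L}_f\psi-\lambda\psi=0$ shows that every eigenfunction of $\mathscr{L}_f:V\to V$ with nonzero eigenvalue lies in $L_{m-1}$; combined with the trivial inclusion $\sigma(\mathscr{L}_f:L_{m-1}\to L_{m-1})\setminus\{0\}\subset\Lambda_f$ this gives $\Lambda_f=\sigma(\mathscr{L}_f:L_{m-1}\to L_{m-1})\setminus\{0\}$. For the multiplicities (the dimensions of the generalized eigenspaces, the case $k=1$ giving the geometric multiplicities), an induction on $k$ using the claim — if $(\mathscr{L}_f-\lambda)^{k+1}\psi=0$ then $(\mathscr{L}_f-\lambda)\psi\in\ker(\mathscr{L}_f-\lambda)^k\subset L_{m-1}$, so the claim applies to $\psi$ — shows $\ker\bigl((\mathscr{L}_f-\lambda)^k:V\to V\bigr)\subset L_{m-1}$ for all $k\ge1$; since $L_{m-1}$ is $\mathscr{L}_f$-invariant, these spaces coincide with $\ker\bigl((\mathscr{L}_f-\lambda)^k:L_{m-1}\to L_{m-1}\bigr)$, and letting $k\to\infty$ identifies the generalized eigenspace of $\lambda$ in $V$ with that in $L_{m-1}$, so the multiplicities agree. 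I expect the crux to be the second paragraph — the passage from the geometric recursion to $\var_{m-1}(\psi)=0$ — since this is exactly where super-continuity of the eigenfunction is indispensable (membership in some $F_\theta$ alone would not suffice when $C\ge|\lambda|$) and where the hypothesis $m\ge2$ enters, guaranteeing that the contraction estimate is only ever invoked for $j\ge m-1\ge1$, the range in which the two sets of admissible symbols coincide.
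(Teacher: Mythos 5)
Your argument is correct, but it is not the route the paper takes: the paper never proves this theorem directly (it is quoted from Pollicott) and instead subsumes it as the special case $\mcB(\{\theta_m(f)\})=L_{m-1}$ of Theorem \ref{thm:919}, whose proof runs through the Banach space $\mcB$, compactness of $\mathscr{L}_f:\mcB\to\mcB$ via the finite-rank approximations $K_{f,m}$, Pollicott's bound $r_{\mathrm{ess}}(\mathscr{L}_f:F_\theta\to F_\theta)\le\theta e^{P(\Re f)}$, and the abstract spectral-comparison lemma of Baladi--Tsujii applied to the dense embedding $\mcB\subset V_\theta$. Your proof is an elementary, self-contained alternative: the one-step estimate $\var_j(\mathscr{L}_f\psi)\le Ne^{\max\Re f}\var_{j+1}(\psi)$ for $j\ge m-1$ is exactly the paper's Lasota--Yorke inequality (Lemma \ref{lem:302}) specialized to $f\in L_m$, where the term $\var_{j+1}(f)$ vanishes, and your key claim --- that $\mathscr{L}_f\psi-\lambda\psi\in L_{m-1}$ with $\lambda\neq0$ and $\psi\in V$ forces $\var_{m-1}(\psi)=0$, by iterating the recursion and beating the factor $(C/|\lambda|)^k$ with the super-geometric decay of $\var_n(\psi)$ --- is a genuinely different mechanism from the functional-analytic one, and it correctly exploits that eigenfunctions of $\mathscr{L}_f:V\to V$ are by definition super-continuous (mere membership in some $F_\theta$ would indeed not suffice). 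The induction on the order of generalized eigenvectors then gives equality of algebraic multiplicities, matching the paper's conclusion. What your approach buys is a short proof requiring no Banach-space machinery and no essential-spectral-radius input; what the paper's approach buys is uniformity: the same argument covers all super-continuous $f$, not just locally constant ones, which is precisely the content of Theorem \ref{thm:919}.
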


Here, for a bounded linear operator $T:E\to E$ on a Banach space $E$, we denote by $\sigma(T:E\to E)$ the spectrum of $T$.

The first aim of this paper is to extend the above theorem to \emph{all} super-continuous $f$. 
Let $\{\theta_m\}_{m\in\N}$ satisfy 
\begin{equation}
\label{cond:227}
\theta_1\ge\theta_2\ge\cdots\ge0,\quad\lim_{m\to\infty}\theta_m=0.
\end{equation}
We set
\begin{equation}
\label{def:787}
\begin{aligned}
\mcB(\{\theta_m\})=\{\phi\in V:\mbox{there}&\ \mbox{exists}\ C\ge0\ \mbox{such that}\\
&\var_m(\phi)\le C\theta_{m+1}^m\ \mbox{for}\ m\in\N\cup\{0\}\}
\end{aligned}
\end{equation}
and write, for $\phi\in\mcB(\{\theta_m\})$,
\begin{equation}
\label{def:788}
\|\phi\|_{\mcB(\{\theta_m\})}=\|\phi\|_{\infty}+\inf\{C\ge0:\var_m(\phi)\le C\theta_{m+1}^m\ \mbox{for}\ m\in\N\cup\{0\}\}.
\end{equation}
It is easy to see that $(\mcB(\{\theta_m\}),\|\cdot\|_{\mcB(\{\theta_m\})})$ is a Banach space.
The first main result of this paper is as follows:

\begin{theorem}
\label{thm:919}
Let $f\in V$ and let $\{\theta_m\}$ satisfy (\ref{cond:227}). 
Assume that
\begin{equation}
\label{cond:037}
\var_m(f)^{1/m}\le\theta_m,\quad m\in\N.
\end{equation}
We write $\mcB=\mcB(\{\theta_m\})$. 
Then the following two assertions hold:
\begin{itemize}
\item[(i)]$\mathscr{L}_f(\mcB)\subset\mcB$ and $\mathscr{L}_f:\mcB\to\mcB$ is compact.
\item[(ii)]$\Lambda_f=\sigma(\mathscr{L}_f:\mcB\to\mcB)\setminus\{0\}$. 
Moreover, for $\lambda\in\Lambda_f$, the multiplicity of $\lambda$ as an eigenvalue of $\mathscr{L}_f:V\to V$ coincides with that as an eigenvalue of $\mathscr{L}_f:\mcB\to\mcB$. 
\end{itemize}
\end{theorem}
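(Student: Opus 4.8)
The plan is to prove (i) and (ii) in turn; the basic computational tool is the elementary estimate
\[
\var_m(\mathscr{L}_f\phi)\le Ne^{\|f\|_\infty}\bigl(\var_{m+1}(\phi)+\|\phi\|_\infty\var_{m+1}(f)\bigr),\qquad m\in\N\cup\{0\},\ \phi\in V,
\]
which follows by writing $(\mathscr{L}_f\phi)(\omega)=\sum_i e^{f(i\omega)}\phi(i\omega)$ (sum over $i$ with $i\omega:=(i,\omega_0,\omega_1,\dots)$ admissible), noting that if $\omega,\omega'$ agree in the first $m$ coordinates then the sums run over the same index set and $i\omega,i\omega'$ agree in the first $m+1$ coordinates, and using $|e^a-e^b|\le e^{\|f\|_\infty}|a-b|$ for $a,b$ in the range of $f$. (If some $\theta_m$ vanishes, then (\ref{cond:037}) forces $f$ to be locally constant and one checks $\mcB=L_j$ for a suitable $j$, so the theorem follows from Theorem \ref{thm:060}; thus we may assume $\theta_m>0$ for all $m$, in which case also $L_n\subset\mcB$ for every $n$.) Feeding $\var_{m+1}(f)\le\theta_{m+1}^{m+1}$ and $\var_{m+1}(\phi)\le\|\phi\|_{\mcB}\theta_{m+2}^{m+1}$ into the estimate, together with $\theta_{m+2}\le\theta_{m+1}$ and $\theta_{m+1}^{m+1}=\theta_{m+1}\theta_{m+1}^m\le\theta_1\theta_{m+1}^m$, gives $\var_m(\mathscr{L}_f\phi)\le 2Ne^{\|f\|_\infty}\theta_1\|\phi\|_{\mcB}\theta_{m+1}^m$; combined with $\|\mathscr{L}_f\phi\|_\infty\le Ne^{\|f\|_\infty}\|\phi\|_\infty$ this proves $\mathscr{L}_f(\mcB)\subset\mcB$ with $\mathscr{L}_f:\mcB\to\mcB$ bounded.

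For compactness I would approximate $\mathscr{L}_f$ in operator norm by finite-rank operators. For each admissible word $w$ of length $n$ fix a point $\xi(w)\in\Sigma_{\mathbf{A}}^+$ beginning with $w$, and define $P_n:\mcB\to L_n$ by $(P_n\phi)(\omega)=\phi(\xi(\omega_0\cdots\omega_{n-1}))$; since $\var_m(P_n\phi)\le\var_m(\phi)$ for all $m$ (and $=0$ for $m\ge n$) one has $\|P_n\phi\|_{\mcB}\le\|\phi\|_{\mcB}$, so $P_n\mathscr{L}_f$ is a bounded operator with finite-dimensional range $L_n\subset\mcB$. The key point is the improved regularity of $\psi=\mathscr{L}_f\phi$ for $\|\phi\|_{\mcB}\le1$, namely $\var_m(\psi)\le 2Ne^{\|f\|_\infty}\theta_{m+1}^{m+1}$ and $\|\psi\|_\infty\le Ne^{\|f\|_\infty}$: since $P_n\psi\in L_n$ one gets $\var_m((I-P_n)\psi)=\var_m(\psi)$ for $m\ge n$, while for $m<n$ the identity $(I-P_n)\psi(\omega)=\psi(\omega)-\psi(\xi(\omega_0\cdots\omega_{n-1}))$, with arguments agreeing in the first $n$ coordinates, gives $\var_m((I-P_n)\psi)\le 2\var_n(\psi)$; using $\theta_{n+1}\le\theta_{m+1}$ one checks in both ranges that $\var_m((I-P_n)\psi)/\theta_{m+1}^m=O(\theta_{n+1})$ uniformly in $m$ once $\theta_{n+1}\le1$, and likewise $\|(I-P_n)\psi\|_\infty\le\var_n(\psi)=O(\theta_{n+1})$. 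Hence $\|\mathscr{L}_f-P_n\mathscr{L}_f\|_{\mcB\to\mcB}=O(\theta_{n+1})\to0$, so $\mathscr{L}_f:\mcB\to\mcB$ is compact. I expect the estimate for the range $m<n$, which is exactly where the extra factor $\theta_{m+1}$ gained from applying $\mathscr{L}_f$ gets consumed, to be the main obstacle in (i): the unit ball of $\mcB$ is not relatively compact, so one really must quantify how much $\mathscr{L}_f$ improves the modulus of continuity and feed this into the tail estimate uniformly in $m$.

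For (ii), compactness already gives that $\sigma(\mathscr{L}_f:\mcB\to\mcB)\setminus\{0\}$ consists of eigenvalues of finite algebraic multiplicity with finite-dimensional generalized eigenspaces, and since $\mcB\subset V$ each such eigenvalue lies in $\Lambda_f$. The remaining content---the reverse inclusion and the coincidence of multiplicities---I would deduce from a single regularity statement: for $\lambda\neq0$, every $\phi\in V$ with $(\lambda-\mathscr{L}_f)^k\phi=0$ lies in $\mcB$. I would prove this by induction on $k$. For $k=1$: from $\phi=\lambda^{-1}\mathscr{L}_f\phi$ and the basic estimate, $a_m:=\var_m(\phi)$ satisfies $a_m\le A\,a_{m+1}+C\theta_{m+1}^m$ with $A=|\lambda|^{-1}Ne^{\|f\|_\infty}$ and $C$ depending on $\phi$; unrolling $n$ times and using $\theta_{m+j+1}\le\theta_{j+1}$ and $\theta_{m+j+1}\le\theta_{m+1}$ gives $a_m\le A^n a_{m+n}+C'\theta_{m+1}^m$ where $C'=C\sum_{j\ge0}(A\theta_{j+1})^j<\infty$ (convergent because $\theta_j\to0$); finally, using that $\phi$ is super-continuous, i.e. $a_{m+n}\le\varepsilon^{m+n}$ for large $n$ with a fixed $\varepsilon\in(0,1/A)$, and letting $n\to\infty$, the term $A^n a_{m+n}\le(A\varepsilon)^n\varepsilon^m$ vanishes and $a_m\le C'\theta_{m+1}^m$, so $\phi\in\mcB$. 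For the inductive step one puts $\phi_1=(\lambda-\mathscr{L}_f)\phi$, which satisfies $(\lambda-\mathscr{L}_f)^{k-1}\phi_1=0$ hence $\phi_1\in\mcB$ by induction, and runs the same argument for $\phi$ using $\mathscr{L}_f\phi=\lambda\phi-\phi_1$, the extra term $\var_m(\phi_1)\le\|\phi_1\|_{\mcB}\theta_{m+1}^m$ being absorbed into $C$. This yields that the generalized eigenspace $\bigcup_k\ker(\lambda-\mathscr{L}_f)^k$ computed in $V$ coincides with the one computed in $\mcB$, which gives both $\Lambda_f=\sigma(\mathscr{L}_f:\mcB\to\mcB)\setminus\{0\}$ and equality of the multiplicities.

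One subtle point in (ii) deserves emphasis: $|\lambda|$ may be strictly smaller than $\|\mathscr{L}_f\|_{\mcB\to\mcB}$, so $A>1$ is possible and one cannot iterate $\phi=\lambda^{-1}\mathscr{L}_f\phi$ naively; it is precisely the a priori super-continuity of the (generalized) eigenfunction, known from $\phi\in V$, that forces $A^n a_{m+n}\to0$ and thereby the required decay $\var_m(\phi)\le C'\theta_{m+1}^m$.
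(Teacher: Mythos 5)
Your argument is correct, but it takes a genuinely different route from the paper, most notably in part (ii). For (i) the paper also proceeds by norm-approximation with finite-rank operators, but it pre-composes with the cylinder-averaging operators $E_m$ of (\ref{def:228}) (so the approximants are $K_{f,m}=\mathscr{L}_f\circ E_m$, estimated in Lemma \ref{lem:216}), whereas you post-compose with point-evaluation projections $P_n$; the two estimates are entirely parallel and both hinge on the same gain of one factor $\theta_{n+1}$ from the Lasota--Yorke inequality, so this difference is cosmetic. For (ii) the paper does not argue directly on $V$: it introduces the completions $V_{\theta}$, invokes Pollicott's bound $r_{\mathrm{ess}}(\mathscr{L}_f:F_{\theta}\to F_{\theta})\le\theta e^{P(\Re f)}$, observes that $\mcB\supset\bigcup_m L_m$ is dense in $V_{\theta}$ (Lemma \ref{lem:681} (iv)), and then cites an abstract spectral-comparison lemma of Baladi--Tsujii to transport eigenvalues and multiplicities between $\mcB$ and $V_{\theta}$. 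Your bootstrap --- iterating $\var_m(\phi)\le A\var_{m+1}(\phi)+C\theta_{m+1}^m$ and killing the term $A^n\var_{m+n}(\phi)$ by the a priori super-continuity of $\phi$, then inducting on $k$ for generalized eigenfunctions --- replaces all of this by a self-contained regularity statement ($\ker_V(\lambda-\mathscr{L}_f)^k=\ker_{\mcB}(\lambda-\mathscr{L}_f)^k$ for $\lambda\neq0$), needing neither topological pressure, nor essential spectral radius, nor any external lemma; what the paper's route buys is that it delegates exactly this regularity to the cited literature and simultaneously settles the relation to the spectra on the auxiliary spaces $V_\theta$, while your route is more elementary and gives the multiplicity statement in the natural sense of generalized eigenspaces computed in $V$, which is the intended reading. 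Two minor points: your preliminary inequality $\var_m(\mathscr{L}_f\phi)\le Ne^{\|f\|_\infty}(\var_{m+1}(\phi)+\|\phi\|_\infty\var_{m+1}(f))$ is only valid for $m\ge1$ (for $m=0$ the preimage index sets $\{i:{\bf A}(i\omega_0)=1\}$ and $\{i:{\bf A}(i\omega_0')=1\}$ need not coincide), but at $m=0$ you only ever need the bound $\var_0\le2\|\cdot\|_\infty$, so nothing breaks; and in the degenerate case where some $\theta_{m}$ vanishes your reduction to Theorem \ref{thm:060} should name the index explicitly (minimal $m_0$ with $\theta_{m_0}=0$, so $f\in L_{m_0}$ and $\mcB=L_{m_0-1}$), which is exactly the case analysis the paper performs in Lemma \ref{lem:293} and Corollary \ref{cor:488}.
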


We write
\[
\theta_m(f)=\sup_{k\ge m}\var_k(f)^{1/k}
\]
for $f\in V$ and $m\in\N$. 
It is easy to see that the sequence $\theta_m=\theta_m(f),\ m\in\N$ satisfies (\ref{cond:227}) and (\ref{cond:037}).
Moreover, if $m_0\ge2$ and $f\in L_{m_0}\setminus L_{m_0-1}$, then $\mcB(\{\theta_m(f)\})=L_{m_0-1}$.
Thus, Theorem  \ref{thm:919} is an extension of Theorem \ref{thm:060} to all super-continuous $f$.

We are interested in the connection between the spectrum of the transfer operator $\mathscr{L}_f$ and the poles of the Ruelle zeta function $\zeta_{f}(z)$. 
Here, the \emph{Ruelle zeta function} $\zeta_f(z)$ of $f$ is an exponential of a formal power series defined by  
\[
\zeta_{f}(z)=\exp\left(\sum_{q=1}^{\infty}\frac{z^{q}}{q}\sum_{\omega\in\Per_{q}(\sigma_{{\bf A}})}e^{S_{q}f(\omega)}\right),\quad z\in\C,
\]
where, for $q\in\N$, $\Per_{q}(\sigma_{{\bf A}})$ denotes the set of $\omega\in\Sigma_{{\bf A}}^+$ with $\sigma_{{\bf A}}^{q}\omega=\omega$ and $S_{q}f(\omega)=\sum_{k=0}^{q-1}f(\sigma_{\textbf{A}}^{k}\omega)$. 
It is well known that the radius of convergence of the formal power series is not less than $e^{-P(\Re f)}$, where $P(\Re f)$ denotes the \emph{topological pressure} of the real part $\Re f$ of $f$. 
Let 
\[
\lambda_1(f),\lambda_2(f),\dots
\]
be the sequence of non-zero eigenvalues of $\mathscr{L}_f:V\to V$, where each eigenvalue is counted according to its multiplicity and $|\lambda_n(f)|\ge|\lambda_{n+1}(f)|$ holds for $n\in\N$. 
(If the number of the eigenvalues is finite, say, $M$, then we put $\lambda_n(f)=0$ for $n>M$.) 
The following theorem is an immediate consequence of \cite[Corollary 6]{Haydn ETDS 1990}.

\begin{thm}
\label{thm:415}
\textit{
Let $f\in V$. 
Then, $\zeta_{f}(z)^{-1}$ admits a holomorphic extension to $\C$ and its zeros are exactly $\{\lambda_n(f)^{-1}:n\in\N,\ \lambda_n(f)\neq0\}$. 
Moreover, the order of each zero coincides with the multiplicity of the corresponding eigenvalue.
}
\end{thm}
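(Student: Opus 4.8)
The plan is to derive Theorem \ref{thm:415} from \cite[Corollary 6]{Haydn ETDS 1990} after two essentially routine checks: that every super-continuous $f$ satisfies the hypotheses of that corollary, and that the eigenvalues occurring there are the $\lambda_n(f)$. For the first point, note that $\var_m(f)^{1/m}\to0$ forces $\var_m(f)\le2^{-m}$ for all large $m$, while $\var_m(f)\le\var_0(f)=2\|f\|_{\infty}<\infty$ for the remaining finitely many $m$; hence $\sum_{m\ge0}\var_m(f)<\infty$, so $f$ has summable variation and, in particular, $f\in F_{\theta}$ for every $\theta\in(0,1)$ (cf.\ Lemma \ref{lem:405}). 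Haydn's theory therefore applies to $f$, and \cite[Corollary 6]{Haydn ETDS 1990} yields that $\zeta_f(z)^{-1}$ extends holomorphically to $\C$ and that its zeros, counted with order, are exactly the reciprocals of the non-zero eigenvalues of the transfer operator $\mathscr{L}_f$ acting on a space of the form $F_{\theta}$, counted with multiplicity.

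It then remains to see that these non-zero eigenvalues, with their multiplicities, coincide with those of $\mathscr{L}_f:V\to V$, i.e.\ with $\{\lambda_n(f):\lambda_n(f)\neq0\}$. Since $V\subset F_{\theta}$, any generalized eigenfunction for a non-zero $\lambda$ in $V$ is also one in $F_{\theta}$; for the converse one uses the smoothing property of the transfer operator, namely that if $(\mathscr{L}_f-\lambda)^k\phi=0$ with $\lambda\neq0$ then $\phi$ lies in the range of $\mathscr{L}_f$, and iterating this together with the elementary estimate relating $\var_m(\mathscr{L}_f\psi)$ to $\var_{m+1}(\psi)$, $\var_{m+1}(f)$ and $\|\psi\|_{\infty}$ shows $\phi\in\bigcap_{\theta'\in(0,1)}F_{\theta'}=V$ — this is the argument behind the $\theta$-independence of the non-zero spectrum in \cite{Pollicott Invent 1986}. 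Equivalently, and more in line with the present paper, one may set $\theta_m=\max\{\theta_m(f),1/m\}$, which satisfies (\ref{cond:227}) and (\ref{cond:037}), interpose $\mcB(\{\theta_m\})\subset V\subset F_{\theta}$, and combine the same smoothing estimate with Theorem \ref{thm:919}(ii), which already matches the non-zero eigendata on $\mcB(\{\theta_m\})$ with that on $V$. Feeding this identification into Haydn's description of the zeros of $\zeta_f(z)^{-1}$ gives the theorem.

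I expect the only genuine content to be this reconciliation of spectra across $\mcB(\{\theta_m\})\subset V\subset F_{\theta}$; after that the statement is a transcription of \cite[Corollary 6]{Haydn ETDS 1990}. It is worth recording why the present paper's compactness result does not by itself yield a shorter proof: the classical trace formula $\sum_{\omega\in\Per_q(\sigma_{{\bf A}})}e^{S_qf(\omega)}=\operatorname{tr}(\mathscr{L}_f^q)$ — valid, with $\mathscr{L}_f$ restricted to $L_{m-1}$, when $f\in L_m$ by Theorem \ref{thm:060} — leads formally to $\zeta_f(z)^{-1}=\det(I-z\mathscr{L}_f)$ and hence at once to the result, but only once $\mathscr{L}_f$ is trace-class on the space in question. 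Theorem \ref{thm:919} supplies compactness of $\mathscr{L}_f$ on $\mcB(\{\theta_m\})$, not nuclearity, and nuclearity fails for general super-continuous $f$ (it requires a stronger decay of $\{\theta_m\}$, which is why the paper's own trace-formula and determinant results are confined to a certain subclass). For the fully general Theorem \ref{thm:415} one must instead approximate $f$ in $F_{\theta}$ by locally constant functions and pass to the limit in the resulting Fredholm determinants, which is precisely what Haydn does; that limiting step, rather than anything above, is the real obstacle in a from-scratch proof.
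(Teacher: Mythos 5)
Your proposal is correct and follows essentially the same route as the paper, which states Theorem \ref{thm:415} as an immediate consequence of \cite[Corollary 6]{Haydn ETDS 1990}: one checks via Lemma \ref{lem:405} that $f\in F_{\theta}$ for every $\theta\in(0,1)$, applies Haydn's corollary on each $F_{\theta}$ and lets $\theta\to0$, and identifies the resulting eigenvalues with $\{\lambda_n(f)\}$ using the standard $\theta$-independence of the non-zero spectrum from \cite{Pollicott Invent 1986}. The extra checks you spell out (and your closing remarks on why compactness alone does not shortcut the argument) are consistent with, and merely elaborate on, what the paper leaves implicit.
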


On the other hand, if $f$ is locally constant, then we have the next Weierstrass canonical product form of $\zeta_f$.

\begin{thm}[\text{\cite[Section 3]{Pollicott Invent 1986}}]
\label{thm:525}
\textit{
Let $f\in V$ be locally constant. 
Then, $\{n\in\N:\lambda_n(f)\neq0\}$ is a finite set and
\begin{equation}
\zeta_f(z)^{-1}=\prod_{n=1}^{\infty}(1-z\lambda_n(f)).
\label{eq:782}
\end{equation}
}
\end{thm}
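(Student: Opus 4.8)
The plan is to reduce the statement to a computation with a finite matrix by means of Theorem \ref{thm:060}, and then to identify $\zeta_f$ near $z=0$ via a trace formula. Since $f$ is locally constant and $L_0\subset L_1\subset\cdots$, I would first fix an integer $m\ge2$ with $f\in L_m$. Theorem \ref{thm:060} then gives $\mathscr{L}_f(L_{m-1})\subset L_{m-1}$ and $\Lambda_f=\sigma(\mathscr{L}_f:L_{m-1}\to L_{m-1})\setminus\{0\}$; since $L_{m-1}$ is finite-dimensional (of dimension at most $N^{m-1}$), the operator $\mathscr{L}_f:L_{m-1}\to L_{m-1}$ is a matrix, so its spectrum is finite and its total multiplicity is at most $N^{m-1}$. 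Combined with the multiplicity assertion of Theorem \ref{thm:060}, this shows that $\{n\in\N:\lambda_n(f)\neq0\}$ is finite; writing $M$ for its cardinality, $\lambda_1(f),\dots,\lambda_M(f)$ are exactly the non-zero eigenvalues of the matrix $\mathscr{L}_f|_{L_{m-1}}$ listed with multiplicity, and $\prod_{n=1}^\infty(1-z\lambda_n(f))=\prod_{n=1}^M(1-z\lambda_n(f))$ reduces to a polynomial. Since the trace of the $q$-th power of a matrix is the sum of the $q$-th powers of its eigenvalues listed with multiplicity, and the zero eigenvalues contribute nothing for $q\ge1$, one obtains $\sum_{n=1}^M\lambda_n(f)^q=\operatorname{tr}\bigl((\mathscr{L}_f|_{L_{m-1}})^q\bigr)$ for every $q\in\N$.

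The heart of the argument is the (classical) trace formula $\operatorname{tr}\bigl((\mathscr{L}_f|_{L_{m-1}})^q\bigr)=\sum_{\omega\in\Per_q(\sigma_{\bf A})}e^{S_qf(\omega)}$, valid for all $q\in\N$. To prove it I would work in the basis $\{\mathbf{1}_{[w]}\}$ of $L_{m-1}$ indexed by admissible words $w=(w_0,\dots,w_{m-2})$, where $[w]$ is the associated cylinder set and $\mathbf{1}_{[w]}$ its indicator. A direct computation from the definition of $\mathscr{L}_f$ gives $\mathscr{L}_f\mathbf{1}_{[w]}=\sum_b e^{f(w_0\cdots w_{m-2}b)}\mathbf{1}_{[w_1\cdots w_{m-2}b]}$, the sum over $b$ with $\textbf{A}(w_{m-2}b)=1$, the weight being well defined because $f\in L_m$ depends only on the first $m$ symbols. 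Hence the matrix of $\mathscr{L}_f|_{L_{m-1}}$ is the weighted adjacency matrix of the graph with an edge $w\to v$ precisely when $v$ arises from $w$ by deleting the first symbol and appending one admissible symbol. Expanding $\operatorname{tr}\bigl((\mathscr{L}_f|_{L_{m-1}})^q\bigr)$ as the sum over closed length-$q$ paths in this graph and reading off the concatenated symbol string of such a path yields a weight-preserving bijection between those paths and the points $\omega\in\Per_q(\sigma_{\bf A})$, under which the path weight equals $\prod_{k=0}^{q-1}e^{f(\sigma_{\bf A}^k\omega)}=e^{S_qf(\omega)}$. Carrying out this bookkeeping carefully, and checking the degenerate cases $m=2$ and short cylinders, is the only real obstacle; the rest is formal. (Equivalently, one could recode onto the subshift of admissible $(m{-}1)$-blocks, turning $\mathscr{L}_f$ into a transfer operator for a one-coordinate potential, for which this trace identity is standard.)

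Finally, combining the two identities above, for $z$ in a small neighbourhood of $0$ (for instance $|z|<e^{-P(\Re f)}$, where both power series below converge and no factor $1-z\lambda_n(f)$ vanishes) one gets
\[
\log\zeta_f(z)=\sum_{q=1}^\infty\frac{z^q}{q}\sum_{\omega\in\Per_q(\sigma_{\bf A})}e^{S_qf(\omega)}=\sum_{q=1}^\infty\frac{z^q}{q}\sum_{n=1}^M\lambda_n(f)^q=-\sum_{n=1}^M\log\bigl(1-z\lambda_n(f)\bigr),
\]
the interchange of summations being harmless since the sum over $n$ is finite. Thus $\zeta_f(z)^{-1}=\prod_{n=1}^M(1-z\lambda_n(f))=\prod_{n=1}^\infty(1-z\lambda_n(f))$ on that neighbourhood of $0$. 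By Theorem \ref{thm:415} the left-hand side extends to an entire function, and since it agrees with the polynomial $\prod_{n=1}^M(1-z\lambda_n(f))$ on a non-empty open set, the identity theorem gives \eqref{eq:782} on all of $\C$, which completes the proof.
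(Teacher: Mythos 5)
The paper does not prove Theorem \ref{thm:525} itself; it simply cites Pollicott's original argument, and your proposal is in substance a correct reconstruction of that classical route: reduce to the finite matrix $\mathscr{L}_f|_{L_{m-1}}$ via Theorem \ref{thm:060}, prove the trace identity $\operatorname{tr}\bigl((\mathscr{L}_f|_{L_{m-1}})^q\bigr)=\sum_{\omega\in\Per_q(\sigma_{\bf A})}e^{S_qf(\omega)}$ by the closed-path/periodic-point bijection in the $(m-1)$-block graph, and then exponentiate near $z=0$. The bookkeeping you defer is genuinely routine: a closed length-$q$ path $w^{(0)}\to\cdots\to w^{(q)}=w^{(0)}$ corresponds to the unique $\omega\in\Per_q(\sigma_{\bf A})$ with $\omega_k=w^{(k\bmod q)}_0$, and the $k$-th edge weight is $e^{f(\sigma_{\bf A}^k\omega)}$ because $f\in L_m$; no degeneracy arises for $q<m-1$. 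Two small remarks. First, your choice of the disc $|z|<e^{-P(\Re f)}$ tacitly uses $|\lambda_n(f)|\le e^{P(\Re f)}$ (true, since the spectral radius of $\mathscr{L}_f$ on $F_\theta$ is bounded by that of $\mathscr{L}_{\Re f}$), but you could sidestep this by simply taking $|z|$ small enough that all series involved converge and no factor vanishes. Second, the appeal to Theorem \ref{thm:415} is not needed: with the paper's reading of (\ref{eq:782}) as the assertion that the polynomial $\prod_{n=1}^{M}(1-z\lambda_n(f))$ is a holomorphic extension of $\zeta_f(z)^{-1}$, the identity on a neighbourhood of $0$ already finishes the proof, since the right-hand side is entire by inspection.
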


Equation (\ref{eq:782}) means that the entire function $\prod_{n=1}^{\infty}(1-z\lambda_n(f))$ is a holomorphic extension of $\zeta_f(z)^{-1}$ to $\C$. 
Thus, for a locally constant $f$, we also obtain an analog of Theorem \ref{thm:415} by (\ref{eq:782}). 
We call (\ref{eq:782}) the \emph{spectral representation} of $\zeta_f(z)$. 

The second aim of this paper is to establish the representation (\ref{eq:782}) for a wider class of $f\in V$.
To this end, we consider the following condition for $f\in V$ and $r\in(0,1)$:
\begin{equation}
\label{cond:209}
\var_m(f)^{1/m}=O(r^m)\quad\mbox{as}\quad m\to\infty,
\end{equation}
that is, $\limsup_{m\to\infty}\var_m(f)^{1/m}/r^m<\infty$. 
If $f$ is locally constant, then (\ref{cond:209}) is valid for any $r\in(0,1)$. (For each $r\in(0,1)$, we give an example of non-locally constant $f\in V$ satisfying (\ref{cond:209}) in Example \ref{exa:004} below.)

Here is the second maim result of this paper. 

\begin{theorem}
\label{thm:925}
Let $f\in V$ and let $r\in(0,1)$ satisfy (\ref{cond:209}). 
Then, for $p>0$ with 
\begin{equation}
\label{cond:468}
r^{2p}e^{h_{\mathrm{top}}(\sigma_{{\bf A}})}<1,
\end{equation}
the following three assertions hold:
\begin{itemize}
\item[(i)]$\sum_{n=1}^{\infty}|\lambda_n(f)|^p<\infty$.
\item[(ii)]For $q\in\N$ with $q\ge p$, we have
\begin{equation}
\label{eq:008}
\sum_{n=1}^{\infty}\lambda_n(f)^q=\sum_{\omega\in\Per_q(\sigma_{{\bf A}})}e^{S_qf(\omega)}.
\end{equation}
\item[(iii)]Let $k_0$ be the smallest $k\in\N\cup\{0\}$ such that $\sum_{n=1}^{\infty}|\lambda_n(f)|^{k+1}<\infty$ and (\ref{eq:008}) holds for $q\in\N$ with $q>k$.
We set $E(z,k_0)=(1-z)\exp(\sum_{k=1}^{k_0}z^k/k),z\in\C$.
Then, the infinite product $\prod_{n=1}^{\infty}E(z\lambda_n(f),k_0)$ converges uniformly on any compact set of $\C$ and we have
\begin{equation}
\label{eq:962}
\zeta_f(z)^{-1}=\exp\left(-\sum_{q=1}^{k_0}\frac{z^q}{q}\sum_{\omega\in\Per_q(\sigma_{{\bf A}})}e^{S_qf(\omega)}\right)\prod_{n=1}^{\infty}E(z\lambda_n(f),k_0).
\end{equation}
\end{itemize}
\end{theorem}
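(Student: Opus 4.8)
The plan is to deduce everything from the compactness furnished by Theorem~\ref{thm:919} together with the zero-counting result of Theorem~\ref{thm:415}, combined with a quantitative estimate on the approximation numbers (or equivalently the eigenvalue decay) of $\mathscr{L}_f$ on a suitably chosen Banach space $\mcB(\{\theta_m\})$. First I would choose the sequence $\theta_m$ adapted to the hypothesis (\ref{cond:209}): since $\var_m(f)^{1/m}=O(r^m)$, one may take $\theta_m$ comparable to $r^m$ (for instance $\theta_m=Cr^m$ for a large constant $C$, truncated so as to satisfy (\ref{cond:227}) and (\ref{cond:037})). With this choice, $\mcB=\mcB(\{\theta_m\})$ consists of functions $\phi$ with $\var_m(\phi)=O(r^{m^2}\cdot\text{something})$; more precisely $\var_m(\phi)\le C\theta_{m+1}^m$, so the $m$-th variation decays like $r^{m(m+1)}$. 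The key point is that this super-exponential decay makes the natural inclusions $L_m\hookrightarrow\mcB$ dense and, crucially, allows one to estimate the $n$-th approximation number $a_n(\mathscr{L}_f:\mcB\to\mcB)$: approximating $\phi\in\mcB$ by its ``cylinder average'' up to depth $m$ (the conditional expectation onto $L_m$) has error controlled by $\var_m$, and the number of cylinders of depth $m$ is at most $N^m\sim e^{m\,h_{\mathrm{top}}(\sigma_{\bf A})}$ (using $h_{\mathrm{top}}(\sigma_{\bf A})=\log$ of the spectral radius of $\mathbf A$, and aperiodicity). Pairing $n\approx e^{m h_{\mathrm{top}}}$ with error $\approx r^{m^2}$ gives $a_n=O(r^{c(\log n)^2})$ for a suitable constant, which is far faster than any polynomial decay.

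Next, by Weyl's inequality for eigenvalues of operators on Banach spaces (the Johnson--König--Maurey--Retherford / König bound $\prod_{k=1}^{n}|\lambda_k|\le \prod_{k=1}^n a_k(\mathscr{L}_f)$ up to a multiplicative constant, or simply the submultiplicativity of singular-value-type sequences), the eigenvalue sequence $\lambda_n(f)$ of $\mathscr{L}_f:\mcB\to\mcB$ — which by Theorem~\ref{thm:919}(ii) is exactly the sequence $\lambda_n(f)$ of nonzero eigenvalues of $\mathscr{L}_f:V\to V$ with multiplicities — decays at the same super-exponential rate. Hence $\sum_n|\lambda_n(f)|^p<\infty$ for every $p>0$, which proves (i) (in fact a much stronger statement than (\ref{cond:468}) alone gives). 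For (ii), the identity (\ref{eq:008}) is the trace formula: one must show $\operatorname{tr}(\mathscr{L}_f^q:\mcB\to\mcB)=\sum_{\omega\in\Per_q(\sigma_{\bf A})}e^{S_qf(\omega)}$ for $q\ge p$, and that the left side equals $\sum_n\lambda_n(f)^q$. The latter is automatic once $\mathscr{L}_f$ is nuclear of order $<1$ (which the approximation-number bound guarantees) since for such operators the trace equals the sum of eigenvalues (Grothendieck--Lidskii). The former — the ``dynamical trace formula'' — is the standard computation $(\mathscr{L}_f^q\phi)(\omega)=\sum_{\sigma^q\omega'=\omega}e^{S_qf(\omega')}\phi(\omega')$, whose ``diagonal'' contribution over $\omega=\omega'\in\Per_q$ gives the right-hand side; making this rigorous as an operator trace on $\mcB$ is where one invokes nuclearity plus an explicit kernel/approximation argument (approximate $f$ by locally constant functions $f_\ell\in L_\ell$, use Theorem~\ref{thm:525}/Pollicott's locally-constant trace formula for each $f_\ell$, and pass to the limit using the continuity of $f\mapsto\mathscr{L}_f$ on the relevant spaces together with the uniform eigenvalue bounds).

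Finally, (iii) is a matter of assembling (i), (ii) and Theorem~\ref{thm:415}. Theorem~\ref{thm:415} already gives that $\zeta_f(z)^{-1}$ is entire with zero set $\{\lambda_n(f)^{-1}\}$ counted with multiplicity. Its logarithmic derivative is, by the defining power series of $\zeta_f$, $-\sum_{q\ge1}z^{q-1}\sum_{\omega\in\Per_q}e^{S_qf(\omega)}$; splitting off the terms $q=1,\dots,k_0$ and using (\ref{eq:008}) for $q>k_0$ to replace $\sum_{\omega\in\Per_q}e^{S_qf(\omega)}$ by $\sum_n\lambda_n(f)^q$, one recognizes the remaining series as the logarithmic derivative of the Weierstrass product $\prod_n E(z\lambda_n(f),k_0)$ — this is exactly the classical identity $\tfrac{d}{dz}\log E(w,k_0)=-w^{k_0}/(1-w)$ summed over $w=z\lambda_n(f)$, which converges locally uniformly precisely because $\sum_n|\lambda_n(f)|^{k_0+1}<\infty$ (the definition of $k_0$). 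Integrating and matching the value at $z=0$ (both sides equal $1$) yields (\ref{eq:962}); the uniform convergence of the product on compacta is the standard Weierstrass-product estimate given $\sum_n|\lambda_n(f)|^{k_0+1}<\infty$. The main obstacle I anticipate is part (ii): establishing the operator trace formula on $\mcB$ rigorously — i.e. justifying that the formal diagonal sum $\sum_{\omega\in\Per_q}e^{S_qf(\omega)}$ really is $\operatorname{tr}(\mathscr{L}_f^q)$ as a nuclear operator on $\mcB$ — since the transfer operator is not given by a nice integral kernel and one must either build an explicit nuclear representation $\mathscr{L}_f=\sum_j \psi_j^*\otimes\eta_j$ with $\sum_j\|\psi_j^*\|\,\|\eta_j\|<\infty$ on $\mcB$ and compute its trace, or approximate by locally constant data and control the limit; the condition (\ref{cond:468}) is exactly what is needed to make $q=\lceil p\rceil$ admissible so that $\mathscr{L}_f^q$ is already trace class with the trace equal to the eigenvalue sum.
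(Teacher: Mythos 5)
Your choice of Banach space is the paper's (it takes $\theta_m=Dr^m$), but the key quantitative step is wrong: the approximation error must be measured in the operator norm of $\mcB=\mcB(\{\theta_m\})$, whose norm weights $\var_k$ by $\theta_{k+1}^{-k}\asymp r^{-k(k+1)}$, not in the sup norm. For $\phi$ in the unit ball of $\mcB$ one has $\var_k(\phi-E_m\phi)\lesssim\theta_{m+1}^m$ for $k<m$, and at $k=m-1$ this is only a factor $\asymp r^{2m}$ below the allowed weight $\theta_m^{m-1}$; consequently $\|\mathscr{L}_f-\mathscr{L}_f\circ E_m\|_{\mcB\to\mcB}\lesssim r^{2m}$ (this is exactly Lemma \ref{lem:807}), \emph{not} $r^{m^2}$. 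Pairing rank $\approx R^m$ with $R>e^{h_{\mathrm{top}}(\sigma_{\bf A})}$ against error $r^{2m}$ yields only polynomial decay, $a_n(\mathscr{L}_f^q)\lesssim n^{-2q\log(1/r)/\log R}$ (Corollary \ref{cor:517}), and $\sum_n a_n(\mathscr{L}_f)^p<\infty$ holds precisely when $r^{2p}e^{h_{\mathrm{top}}(\sigma_{\bf A})}<1$; hypothesis (\ref{cond:468}) is exactly what makes the Weyl-inequality step close. Your claimed $a_n=O(r^{c(\log n)^2})$ and the ensuing assertion that (i) holds for \emph{every} $p>0$ are therefore unjustified (you implicitly mixed the sup-norm error with the $\mcB$-norm), and they would render (\ref{cond:468}), the restriction $q\ge p$ in (ii), and the whole $k_0$-mechanism of (iii) vacuous — which is not what the theorem asserts.

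The same over-optimism infects your treatment of (ii). You invoke ``nuclear of order $<1$ implies trace $=$ sum of eigenvalues''; in a general Banach space Lidskii's theorem fails for nuclear operators (Grothendieck's positive result requires $2/3$-nuclearity), and with the correct polynomial bounds one only gets $\mathscr{L}_f^q\in\mathfrak{L}_1^{(a)}(\mcB)$ for $q\ge p$, so this route does not apply. The paper never computes an operator trace: it sets $f_m=E_mf$, uses the locally constant (finite matrix) identity $\sum_{\omega\in\Per_q(\sigma_{\bf A})}e^{S_qf_m(\omega)}=\sum_n\lambda_n(f_m)^q$, and passes to the limit using the continuity of the eigenvalue-sum functional $\tau$ on $\mathfrak{L}_1^{(a)}(\mcB)$ with respect to $|||T|||=\sum_na_n(T)$ (property (II) in Section \ref{sec:introduction}). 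Your fallback (``approximate by locally constant data and pass to the limit by continuity of $f\mapsto\mathscr{L}_f$'') is the right idea but omits the decisive step: operator-norm convergence $\|\mathscr{L}_f-\mathscr{L}_{f_m}\|_{\mcB\to\mcB}\to0$ (Lemma \ref{lem:950}) does not by itself give convergence of eigenvalue sums; one needs $\sum_na_n(\mathscr{L}_f^q-\mathscr{L}_{f_m}^q)\to0$, which the paper derives from approximation-number bounds that are uniform in $m$ (since $f_m$ satisfies the same bounds (\ref{condi:029})) together with a tail-splitting argument (Corollary \ref{cor:029}(ii)). Your part (iii) is essentially correct and coincides, up to working with the logarithmic derivative instead of the logarithm, with the paper's argument.
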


Note that if $p\le1$, then $k_0=0$, and hence, (\ref{eq:962}) yields the spectral representation (\ref{eq:782}) of $\zeta_f(z)$. 

Equations like (\ref{eq:008}), which give a connection between the poles of a zeta function and the spectrum of the associated transfer operator, are often called \emph{trace formulas}. 
The trace formulas for dynamical zeta functions are widely studied in differentiable dynamical systems; see, e.g., \cite{Fried 1986, Jezequel 2018, Ruelle Invent 76, Ruelle IHES 90}. 

To prove Theorem \ref{thm:925} above, we introduce the operator ideal $\mathfrak{L}_p^{(a)}(E)$. 
Let $E$ be a Banach space.
We denote by $\mathfrak{L}(E)$ the set of bounded linear operators on $E$. 
For $p>0$, we set
\[
\mathfrak{L}_p^{(a)}(E)=\left\{T\in\mathfrak{L}(E):\sum_{n=1}^{\infty}a_n(T)^p<\infty\right\},
\]
where, for $n\in\N$, $a_n(T)$ denotes the $n$-th \emph{approximation number} of $T$ defined by
\begin{equation}
\label{def:785}
a_n(T)=\inf\{\|T-A\|:A\in\mathfrak{L}(E),\rank A<n\}.
\end{equation}
It is easy to see that any element of $\mathfrak{L}_p^{(a)}(E)$ is a compact operator and $\mathfrak{L}_p^{(a)}(E)$ is a left and right ideal of $\mathfrak{L}(E)$, that is, $\mathfrak{L}_p^{(a)}(E)$ is closed under addition and scalar multiplication, and $AT,TA\in\mathfrak{L}_p^{(a)}(E)$ for $A\in\mathfrak{L}(E)$ and $T\in\mathfrak{L}_p^{(a)}(E)$.
Moreover, for $T\in\mathfrak{L}_p^{(a)}(E)$, the following two assertions hold (see \cite[Theorems 3.6.3 and 4.2.26]{Pietsch} for the proof):
\begin{itemize}
\item[(I)] Let $\lambda_1(T),\ \lambda_2(T),\dots$ be the sequence of non-zero eigenvalues of $T$, where each eigenvalue is counted according to its multiplicity and $|\lambda_n(T)|\ge|\lambda_{n+1}(T)|$ holds for $n\in\N$. 
(If the number of the eigenvalues is finite, say, $M$, then we put $\lambda_n(T)=0$ for $n>M$.) 
Then, $\sum_{n=1}^{\infty}|\lambda_n(T)|^p<\infty$.
\item[(II)] Let $p=1$ and we write
\[
|||\,T\,|||=\sum_{n=1}^{\infty}a_n(T),\quad\tau(T)=\sum_{n=1}^{\infty}\lambda_n(T).
\]
Let $T_1,T_2,\ldots\in\mathfrak{L}_1^{(a)}(E)$. 
If $\lim_{m\to\infty}|||\,T_m-T\,|||=0$, then $\lim_{m\to\infty}\tau(T_m)=\tau(T)$.
\end{itemize}
Applying the theory of $\mathfrak{L}_p^{(a)}(E)$ to $E=\mcB$, we prove Theorem \ref{thm:925} in Section \ref{sec:main02}.

This paper is organized as follows. 
In Section \ref{sec:approximation}, we give preliminary definitions and basic facts. 
In Section \ref{sec:estimates}, we prove some estimates for the proofs of the main results, i.e., Theorems \ref{thm:919} and \ref{thm:925}.
In Section \ref{sec:main01}, we prove Theorem \ref{thm:919},
and in Section \ref{sec:main02}, we prove Theorem \ref{thm:925}.
In Appendix \ref{appendix:transfer operator}, we study the properties of transfer operators acting on $V$. 
It is natural to hope that the transfer operator $\mathscr{L}_f:V\to V$ is a compact operator. 
However, in Appendix \ref{appendix:transfer operator},  we prove that this is not the case for any $f\in V$.
Moreover, we give an example of $f\in V$ such that $\sum_{n=1}^{\infty}|\lambda_n(f)|=\infty$.
In Appendix \ref{appendix:TVS}, we study the properties of $V$ itself. 
We will see that $V$ is naturally a nuclear space. 
Moreover, we prove that $V$ has many non-trivial (i.e., non-locally constant) elements. 
More precisely, we prove that the set of non-locally constant elements of $V$ is a residual subset of $V$. 
In Appendix \ref{appendix:eigenvalues}, we study the asymptotic behavior of eigenvalues of transfer operators.
Using the Weyl inequality in Banach spaces (see, e.g., \cite[Theorem 2.a.6]{Konig}), we obtain an asymptotic behavior of $\{\lambda_n(f)\}_{n\in\N}$ for $f\in V$ satisfying (\ref{cond:209}) for some $r\in(0,1)$.
In Appendix \ref{appendix:eigenvalues}, we give an asymptotic behavior of $\{\lambda_n(f)\}_{n\in\N}$ for arbitrary $f\in V$, using a recent result of Demuth et al. \cite{Demuth et al JFA 2015}.


\section{Preliminaries}
\label{sec:approximation}

An element of $\bigcup_{m\in\N\cup\{0\}}\{1,\dots,N\}^m$ is called a \emph{word}. 
For $m\in\N\cup\{0\}$ and a word $w\in\{1,\dots,N\}^m$, we write $|w|= m$. 
Moreover, we write $w=w_0\cdots w_{|w|-1}$ for a word $w$, where $w_k\in\{1,...,N\},\ 0\le k\le|w|-1$. 
The \emph{empty word} is the unique word $w$ with $|w|=0$. 
A word $w$ with $|w|\ge2$ is \emph{self-avoiding} if $w_k\neq w_l$ for $0\le k\neq l\le|w|-1$.
We define the new word $vw$ for two words $v,w$ by $vw=v_0\cdots v_{|v|-1}w_{0}\cdots w_{|w|-1}$.
Moreover, for a word $w$ with $|w|\ge1$, we define $w^*\in\{1,\dots,N\}^{\N\cup\{0\}}$ by $w^*=www\cdots$. 
A word $w$ is said to be $\bf A$-\emph{admissible} if $|w|\ge2$ and $A(w_{k}w_{k+1})=1$ for $0\le k\le|w|-1$. 
For $\omega\in\Sigma_{{\bf A}}^{+}$ and $m\in\N\cup\{0\}$, we define the word $\omega|m\in\{1,\dots,N\}^m$ by
\[
\omega|m=\begin{cases}
\omega_{0}\cdots\omega_{m-1}&(m\ge1),\\
\mbox{the empty word}&(m=0).
\end{cases}
\]
For $m\in\N\cup\{0\}$ and $w\in\{1,\dots,N\}^m$, we set
\[
[w]=\{\omega\in\Sigma_{{\bf A}}^{+}:\omega|m=w\}.
\]
A point $\omega\in\Sigma_{{\bf A}}^{+}$ is said to be \emph{periodic} if $\sigma_{{\bf A}}^{q}\omega=\omega$ for some $q\in\N$. 
For a periodic point $\omega$, its \emph{period} is the smallest $q\in\N$ such that $\sigma_{{\bf A}}^{q}\omega=\omega$.  
We denote by $\Per_{q}(\sigma_{{\bf A}})$ the set of periodic points $\omega\in\Sigma_{{\bf A}}^+$ with $\sigma_{{\bf A}}^{q}\omega=\omega$.

We recall that the $N\times N$ zero-one matrix ${\bf A}$ is assumed to be aperiodic, that is, all entries of ${{\bf A}}^{k}$ are positive for some positive integer $k$. 
The following lemma is needed in Appendices \ref{appendix:transfer operator} and \ref{appendix:TVS}.

\begin{lemma}
\label{lem:203}
At least one row of ${\bf A}$ has more than two entries which are equal to one, and similarly for columns.
\end{lemma}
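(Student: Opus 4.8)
The plan is to restate the lemma graph-theoretically. Let $G$ be the directed graph on the vertex set $\{1,\dots,N\}$ with an edge $i\to j$ precisely when ${\bf A}(ij)=1$. The assertion about rows then says that some vertex of $G$ has out-degree at least two, and the assertion about columns that some vertex has in-degree at least two. These two halves are symmetric: out-degrees are row sums of ${\bf A}$ and in-degrees are column sums, and ${\bf A}^{\mathrm T}$ is aperiodic whenever ${\bf A}$ is, since $({\bf A}^{\mathrm T})^k=({\bf A}^k)^{\mathrm T}$ has all entries positive exactly when ${\bf A}^k$ does. Hence it suffices to prove the statement for rows and then apply it to ${\bf A}^{\mathrm T}$.

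For the row statement I would argue by contradiction, assuming that every row of ${\bf A}$ contains at most one entry equal to one. No row of ${\bf A}$ can be identically zero: if the $i_0$-th row vanished, then an immediate induction based on ${\bf A}^{k+1}(ij)=\sum_{l}{\bf A}^{k}(il){\bf A}(lj)$ shows that the $i_0$-th row of ${\bf A}^{k}$ vanishes for every $k\in\N$, contradicting aperiodicity. Consequently, under the contradiction hypothesis, every row of ${\bf A}$ has exactly one entry equal to one, so there is a map $\pi\colon\{1,\dots,N\}\to\{1,\dots,N\}$ with ${\bf A}(i\,\pi(i))=1$ for all $i$.

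The heart of the argument is the identity ${\bf A}^{k}(ij)=1$ if $j=\pi^{k}(i)$ and ${\bf A}^{k}(ij)=0$ otherwise, which I would establish by induction on $k$ using the same convolution formula. It follows that each row of ${\bf A}^{k}$ also has exactly one entry equal to one; since $N\ge2$, such a matrix always has a zero entry, so no power of ${\bf A}$ has all entries positive, contradicting aperiodicity. Therefore at least one row of ${\bf A}$ has two or more entries equal to one, and the corresponding statement for columns follows by applying this to ${\bf A}^{\mathrm T}$ as above.

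I do not expect a real obstacle: the whole point is the elementary observation that a zero-one matrix with all row sums equal to one is the adjacency matrix of a self-map of $\{1,\dots,N\}$, and the iterates of such a matrix never become strictly positive once $N\ge2$. The only items requiring a little care are running the two inductions cleanly and recording that the transpose of an aperiodic matrix is aperiodic, so that the column case needs no separate argument.
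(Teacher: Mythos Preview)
Your argument is correct and follows essentially the same route as the paper: assume no row has two ones, deduce that the nonzero pattern of ${\bf A}$ is given by a self-map of $\{1,\dots,N\}$, observe that powers then have the same single-one-per-row structure and hence never become strictly positive, and reduce the column case to the row case via ${\bf A}^{\mathrm T}$. Your version is in fact slightly more careful than the paper's, which tacitly skips the ``no zero row'' step and calls the resulting map a permutation, whereas you correctly note it is only a function (a distinction that does not affect the conclusion).
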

\begin{proof}
Assume that every row has just one entry which is equal to 1. 
Then there exists a permutation $\tau$ of the set $\{1,...,N\}$ such that ${\bf A}(ij)=1\ (j=\tau(i)),\ =0\ (j\neq\tau(i))$. 
Thus, ${\bf A}^{k}(ij)=1\ (j=\tau^{k}(i)),\ =0\ (j\neq\tau^{k}(i))$ for $k\ge1$,  and hence, ${\bf A}$ is not aperiodic.
The transpose ${\bf A}^{\mathrm{T}}$ of ${\bf A}$ is also aperiodic.
Hence, the assertion for columns also holds.
\end{proof}

We recall from Section \ref{sec:introduction} that, for $\theta\in(0,1)$, $F_{\theta}$ denotes the set of complex-valued functions on $\Sigma_{{\bf A}}^+$ that are Lipschitz continuous with respect to the metric $d_{\theta}$ on $\Sigma_{{\bf A}}^+$. 
The Lipschitz norm $\|\phi\|_{\theta}$ for $\phi\in F_{\theta}$ is defined by $\|\phi\|_{\theta}=\|\phi\|_{\infty}+[\phi]_{\theta}$, where
\[
\|\phi\|_{\infty}=\max_{\omega\in\Sigma_{{\bf A}}^+}|\phi(\omega)|,\qquad[\phi]_{\theta}=\sup_{\omega,\omega^{\prime}\in\Sigma_{{\bf A}}^+:\ \omega\neq\omega^{\prime}}\frac{|\phi(\omega)-\phi(\omega^{\prime})|}{d_{\theta}(\omega,\omega^{\prime})}.
\]
Then, $(F_{\theta},\|\cdot\|_{\theta})$ is a Banach space. 
Moreover, we easily see that if $\theta<\theta^{\prime}$, then $\|\phi\|_{\theta^{\prime}}\le\|\phi\|_{\theta}$ for $\phi\in F_{\theta}$, and hence, $F_{\theta}\subset F_{\theta^{\prime}}$. 

Let us recall the following definition of a super-continuous function.
\begin{definition}
\label{def:396}
A \emph{super-continuous function} on $\Sigma_{{\bf A}}^+$ is a function $\phi:\Sigma_{{\bf A}}^+\to\C$ such that $\var_{m}(\phi)^{1/m}\to0$ as $m\to\infty$.
\end{definition}
We denote by $V$ the set of all super-continuous functions on $\Sigma_{{\bf A}}^+$.

\begin{remark}
\label{rem:072}
A super-continuous function on a topological Markov shift was first defined by Quas and Siefken in \cite{Quas ETDS 2012} as follows: $\phi:\Sigma_{{\bf A}}^+\to\C$ is called a super-continuous function if there exists a positive and non-increasing sequence $\{A_m\}_{m\in\N}$ such that $\var_m(\phi)\le A_m$ for $m\in\N$ and $A_{m+1}/A_m\to0$ as $m\to\infty$.
Let $V^{\prime}$ be the set of super-continuous functions in the sense of \cite{Quas ETDS 2012}. 
Then, $V=V^{\prime}$.
Indeed, $V^{\prime}\subset V$ is obvious.
Let $\phi\in V$.
We may assume that $0<\var_m(\phi)<1$ for any $m\in\N$. 
We set $A_m=\theta_m^m$ for $m\in\N$, where $\theta_m=\inf\{\theta\in(0,1):\var_k(\phi)\le\theta^k,k\ge m\}$.
Then, $\{A_m\}_{m\in\N}$ is positive and non-increasing. 
Moreover, $\var_m(\phi)\le A_m$ for $m\in\N$ and $A_{m+1}/A_m\to0$ as $m\to\infty$. 
Thus, $\phi\in V^{\prime}$.
\end{remark}

\begin{lemma}
\label{lem:405}
We have $V=\bigcap_{\theta\in(0,1)}F_{\theta}$.
\end{lemma}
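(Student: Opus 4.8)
The plan is to establish the two inclusions $V\subseteq\bigcap_{\theta\in(0,1)}F_\theta$ and $\bigcap_{\theta\in(0,1)}F_\theta\subseteq V$ separately, both resting on the elementary identity
\[
[\phi]_\theta=\sup_{m\ge1}\frac{\var_m(\phi)}{\theta^m}\qquad(\theta\in(0,1)),
\]
valid in $[0,\infty]$ for every function $\phi\colon\Sigma_{\textbf{A}}^+\to\C$. To prove it I would sort the pairs $\omega\neq\omega'$ according to $m_0=\min\{m:\omega_m\neq\omega'_m\}$: a pair with $m_0=m$ satisfies $\omega_k=\omega'_k$ for $0\le k\le m-1$ and $d_\theta(\omega,\omega')=\theta^m$, so its difference quotient is at most $\var_m(\phi)/\theta^m$, which gives the inequality ``$\le$''; conversely, for fixed $m$ any pair agreeing on the first $m$ coordinates has $d_\theta(\omega,\omega')=\theta^{m_0}\le\theta^m$, whence $|\phi(\omega)-\phi(\omega')|\le[\phi]_\theta\,\theta^m$, and taking the supremum over such pairs yields $\var_m(\phi)\le[\phi]_\theta\,\theta^m$, which gives ``$\ge$''.

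For $V\subseteq\bigcap_\theta F_\theta$, let $\phi\in V$ and fix $\theta\in(0,1)$. First I would observe that $\phi$ is bounded: choosing $m$ large enough that $\var_m(\phi)\le1$, the compact space $\Sigma_{\textbf{A}}^+$ is covered by the finitely many cylinders $[w]$ with $|w|=m$, and $\phi$ oscillates by at most $\var_m(\phi)\le1$ on each of them, so $\|\phi\|_\infty<\infty$; since $m\mapsto\var_m(\phi)$ is non-increasing, this also gives $\var_k(\phi)\le\var_0(\phi)\le2\|\phi\|_\infty<\infty$ for all $k$. Because $\var_m(\phi)^{1/m}\to0<\theta$, the terms $\var_m(\phi)/\theta^m=(\var_m(\phi)^{1/m}/\theta)^m$ tend to $0$, so the supremum in the identity is finite; hence $[\phi]_\theta<\infty$ and $\phi\in F_\theta$. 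As $\theta\in(0,1)$ was arbitrary, $\phi\in\bigcap_\theta F_\theta$.

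For the reverse inclusion, let $\phi\in\bigcap_\theta F_\theta$ and fix $\theta\in(0,1)$. The identity gives $\var_m(\phi)\le[\phi]_\theta\,\theta^m$ with $[\phi]_\theta<\infty$, hence $\var_m(\phi)^{1/m}\le[\phi]_\theta^{1/m}\,\theta$; letting $m\to\infty$ (so $[\phi]_\theta^{1/m}\to1$, the case $[\phi]_\theta=0$ being trivial) yields $\limsup_{m\to\infty}\var_m(\phi)^{1/m}\le\theta$. Since this holds for every $\theta\in(0,1)$, we conclude $\var_m(\phi)^{1/m}\to0$, i.e.\ $\phi\in V$. I do not expect a genuine obstacle here; the only point requiring any care is the boundedness check in the first inclusion, needed so that $\|\phi\|_\theta=\|\phi\|_\infty+[\phi]_\theta$ is finite and not merely the seminorm $[\phi]_\theta$, and this is handled by the finite-cover argument above.
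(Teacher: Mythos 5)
Your proof is correct and takes essentially the same route as the paper's, namely deducing both inclusions from the relation between $\var_m(\phi)$ and the Lipschitz seminorm $[\phi]_\theta$; you merely spell out details (the explicit identity and the boundedness of $\phi$) that the paper leaves implicit. One small correction: the displayed identity should read $[\phi]_\theta=\sup_{m\ge0}\var_m(\phi)/\theta^m$ rather than $\sup_{m\ge1}$ (with $m\ge1$ the inequality ``$\le$'' fails, e.g.\ for a non-constant $\phi\in L_1$, where all $\var_m(\phi)$ with $m\ge1$ vanish but $[\phi]_\theta>0$); your sorting argument in fact proves the $m\ge0$ version, and since you separately verify $\var_0(\phi)\le2\|\phi\|_\infty<\infty$, the rest of the proof goes through unchanged.
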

\begin{proof}
Let $\phi\in V$. 
Fix $\theta\in(0,1)$. 
For sufficiently large $m$, we have $\var_{m}(\phi)^{1/m}\le\theta$, and hence, we have $\var_{m}(\phi)\le\theta^m$. 
This implies $\phi\in F_{\theta}$.

Let $\phi\in\bigcap_{\theta\in(0,1)}F_{\theta}$. 
For $\theta\in(0,1)$, there exists $C>0$ such that $\var_{m}(\phi)\le C\theta^m$ for $m\in\N\cup\{0\}$, and hence, $\limsup_{n\to\infty}\var_{m}(\phi)^{1/m}\le\theta$. 
Letting $\theta\to0$, we obtain $\lim_{n\to\infty}\var_{m}(\phi)^{1/m}=0$.
\end{proof}

Recall from Section \ref{sec:introduction} that $V$ is equipped with the topology induced by the family of norms $\{\|\cdot\|_{\theta}\}_{\theta\in(0,1)}$. 
By the definition of the topology of $V$, we easily see that $\mathscr{L}_f:V\to V$ is continuous for $f\in V$.
Moreover, since $\|\cdot\|_{\theta^{\prime}}\le\|\cdot\|_{\theta}$ for $\theta<\theta^{\prime}$, we see that the topology of $V$ coincides with that induced by the countable subfamily $\{\|\cdot\|_{1/(m+1)}\}_{m\in\N}$. 
Hence, we obtain the following proposition:

\begin{proposition}
\label{Prop:715}
$V$ is a Fr\'{e}chet space. 
\end{proposition}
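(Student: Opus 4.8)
The plan is to verify the two defining properties of a Fréchet space: that $V$ is a complete metrizable topological vector space, and that its topology is locally convex. Local convexity is immediate, since the topology is generated by a family of seminorms (in fact norms) $\{\|\cdot\|_{\theta}\}_{\theta\in(0,1)}$, and any topology generated by seminorms is locally convex. Metrizability follows from the remark already made in the excerpt: because $\|\cdot\|_{\theta'}\le\|\cdot\|_{\theta}$ whenever $\theta<\theta'$, the countable subfamily $\{\|\cdot\|_{1/(m+1)}\}_{m\in\N}$ is cofinal in $\{\|\cdot\|_{\theta}\}_{\theta\in(0,1)}$ and hence generates the same topology; a Hausdorff topological vector space whose topology is generated by a countable family of seminorms is metrizable, with an explicit translation-invariant metric $d(\phi,\psi)=\sum_{m}2^{-m}\min(1,\|\phi-\psi\|_{1/(m+1)})$. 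Hausdorffness holds because each $\|\cdot\|_{\theta}$ is actually a norm, so $\|\phi\|_{\theta}=0$ already forces $\phi=0$.

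The substantive step is completeness. So the main work is: given a sequence $(\phi_n)$ in $V$ that is Cauchy with respect to every $\|\cdot\|_{\theta}$ (equivalently, with respect to each $\|\cdot\|_{1/(m+1)}$), produce a limit $\phi\in V$. First I would fix $\theta\in(0,1)$: since $(F_{\theta},\|\cdot\|_{\theta})$ is a Banach space (stated in the Preliminaries), the sequence $(\phi_n)$ converges in $F_{\theta}$ to some $\phi^{(\theta)}\in F_{\theta}$. Because the inclusions $F_{\theta}\hookrightarrow F_{\theta'}$ for $\theta<\theta'$ are continuous and $\|\cdot\|_{\infty}$-convergence is implied by $\|\cdot\|_{\theta}$-convergence, all the limits $\phi^{(\theta)}$ agree as functions on $\Sigma_{\mathbf A}^+$; call the common function $\phi$. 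Thus $\phi\in\bigcap_{\theta\in(0,1)}F_{\theta}=V$ by Lemma \ref{lem:405}, and $\phi_n\to\phi$ in $\|\cdot\|_{\theta}$ for every $\theta$, i.e.\ $\phi_n\to\phi$ in $V$. Hence $V$ is complete.

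The only mild obstacle is bookkeeping the compatibility of the various Banach-space limits $\phi^{(\theta)}$: one must check they coincide, which follows from the fact that convergence in any single $\|\cdot\|_{\theta}$ entails pointwise (indeed uniform) convergence, so the pointwise limit is unambiguous and equals every $\phi^{(\theta)}$. Everything else — local convexity, Hausdorffness, metrizability via the cofinal countable subfamily — is routine and has effectively been recorded already in the discussion preceding the statement. I would therefore organize the proof as: (1) the topology is locally convex and Hausdorff; (2) it is metrizable, via the countable cofinal subfamily $\{\|\cdot\|_{1/(m+1)}\}$; (3) it is complete, by the argument above using completeness of each $F_{\theta}$ and Lemma \ref{lem:405}; conclude that $V$ is a Fréchet space.
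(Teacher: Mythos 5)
Your proposal is correct and follows essentially the same route as the paper: the paper's argument is just the observation that the defining family $\{\|\cdot\|_{\theta}\}_{\theta\in(0,1)}$ can be replaced by the countable cofinal subfamily $\{\|\cdot\|_{1/(m+1)}\}_{m\in\N}$, giving a metrizable locally convex topology, with completeness resting (implicitly) on the completeness of each $F_{\theta}$ together with $V=\bigcap_{\theta\in(0,1)}F_{\theta}$ from Lemma \ref{lem:405}. You merely spell out the completeness step (agreement of the limits via uniform convergence) that the paper leaves to the reader.
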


Fix a Borel probability measure $\mu$ on $\Sigma_{{\bf A}}^{+}$ such that $\mu(G)>0$ for every non-empty open set $G$ of $\Sigma_{{\bf A}}^+$. 
(The Gibbs measure for a real-valued function in $F_{\theta}$ satisfies this condition; see, e.g., \cite[Chapter 3]{Parry-Pollicott}.)
Let $C(\Sigma_{{\bf A}}^+)$ be the set of complex-valued continuous functions on $\Sigma_{{\bf A}}^+$.
For $m\in\N$, we define a finite-rank operator $E_{m}:C(\Sigma_{{\bf A}}^+)\to L_{m}$ by
\begin{equation}
\label{def:228}
(E_{m}\phi)(\omega)=\frac{1}{\mu([\omega|m])}\int_{[\omega|m]}\phi\,d\mu.
\end{equation}
Notice that $E_{m}\phi=\phi$ for $\phi\in L_m$. 
In addition to (\ref{def:080}), we write 
\[
V_{m}^{\theta}(\phi)=\sup_{k\ge m}\frac{\var_{k}(\phi)}{\theta^k}
\]
for $\phi:\Sigma_{{\bf A}}^+\to\C,m\in\N\cup\{0\}$ and $\theta\in(0,1)$. 
Notice that $[\phi]_{\theta}=V_{0}^{\theta}(\phi)$ for $\phi\in F_{\theta}$. 
The next lemma will be used in Sections \ref{sec:estimates}--\ref{sec:main02} and Appendix \ref{appendix:TVS}. 

\begin{lemma}
\label{lem:681}
Let $\phi\in C(\Sigma_{{\bf A}}^+),m\in\N$ and $\theta,\theta^{\prime}\in(0,1)$.
\begin{itemize}
\item[(i)]If $\phi$ is real-valued, then so is $E_m\phi$ and $\max E_m\phi\le\max\phi$.
\item[(ii)]For $k\in\N\cup\{0\}$, $\var_k(E_m\phi)\le\var_k(\phi)$.
\item[(iii)]If $\phi\in F_{\theta}$, then $\|\phi-E_{m}\phi\|_{\infty}\le V_{m}^{\theta}(\phi)\theta^{m}$.
\item[(iv)]If $\phi\in F_{\theta}$ and $\theta<\theta^{\prime}$, then $\|\phi-E_{m}\phi\|_{\theta^{\prime}}\le3V_{m}^{\theta}(\phi)\left(\theta/\theta^{\prime}\right)^m$.
\end{itemize}
\end{lemma}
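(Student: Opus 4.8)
The plan is to derive parts (i)--(iii) directly from the fact that $(E_m\phi)(\omega)$ is the $\mu$-average of $\phi$ over the cylinder $[\omega|m]$, and then to obtain (iv) by combining (ii) and (iii). For (i): since $\mu([\omega|m])>0$, the value $(E_m\phi)(\omega)=\mu([\omega|m])^{-1}\int_{[\omega|m]}\phi\,d\mu$ is a weighted average of the values of $\phi$ on $[\omega|m]$, so it is real when $\phi$ is real, and if $\phi\le\max\phi$ pointwise then $(E_m\phi)(\omega)\le\max\phi$. For (iii): for fixed $\omega$ write $(\phi-E_m\phi)(\omega)=\mu([\omega|m])^{-1}\int_{[\omega|m]}\bigl(\phi(\omega)-\phi(\eta)\bigr)\,d\mu(\eta)$; every $\eta\in[\omega|m]$ satisfies $\eta|m=\omega|m$, hence $|\phi(\omega)-\phi(\eta)|\le\var_m(\phi)\le V_m^{\theta}(\phi)\theta^m$ by the definition of $V_m^{\theta}$, and taking the supremum over $\omega$ gives the claim.

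For (ii) I would split into the cases $k\ge m$ and $k\le m-1$. If $k\ge m$, then any $\omega,\omega'$ with $\omega|k=\omega'|k$ also satisfy $\omega|m=\omega'|m$, so $(E_m\phi)(\omega)=(E_m\phi)(\omega')$ and $\var_k(E_m\phi)=0$. If $k\le m-1$ and $\omega|k=\omega'|k$, then every $\eta\in[\omega|m]$ and $\eta'\in[\omega'|m]$ satisfy $\eta|k=\omega|k=\omega'|k=\eta'|k$, so $|\phi(\eta)-\phi(\eta')|\le\var_k(\phi)$; using that $(E_m\phi)(\omega)$ does not depend on the variable of integration over $[\omega'|m]$ and vice versa, one has
\[
(E_m\phi)(\omega)-(E_m\phi)(\omega')=\frac{1}{\mu([\omega|m])\,\mu([\omega'|m])}\int_{[\omega|m]}\int_{[\omega'|m]}\bigl(\phi(\eta)-\phi(\eta')\bigr)\,d\mu(\eta')\,d\mu(\eta),
\]
and taking absolute values inside the double integral yields $\var_k(E_m\phi)\le\var_k(\phi)$.

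For (iv), put $\psi=\phi-E_m\phi$; by (iii), $\|\psi\|_{\infty}\le V_m^{\theta}(\phi)\theta^m$. To bound $[\psi]_{\theta'}=\sup_{k\ge0}\var_k(\psi)(\theta')^{-k}$, I would use two estimates for $\var_k(\psi)$: first $\var_k(\psi)\le\var_k(\phi)+\var_k(E_m\phi)\le2\var_k(\phi)$ by (ii), and second $\var_k(\psi)\le2\|\psi\|_{\infty}\le2V_m^{\theta}(\phi)\theta^m$. For $k\ge m$ the first gives $\var_k(\psi)(\theta')^{-k}\le2V_m^{\theta}(\phi)(\theta/\theta')^k\le2V_m^{\theta}(\phi)(\theta/\theta')^m$ since $\theta/\theta'<1$; for $k\le m-1$ the second gives $\var_k(\psi)(\theta')^{-k}\le2V_m^{\theta}(\phi)(\theta/\theta')^m(\theta')^{m-k}\le2V_m^{\theta}(\phi)(\theta/\theta')^m$ since $0<\theta'<1$. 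Hence $[\psi]_{\theta'}\le2V_m^{\theta}(\phi)(\theta/\theta')^m$, and together with $\theta^m\le(\theta/\theta')^m$ this gives $\|\psi\|_{\theta'}=\|\psi\|_{\infty}+[\psi]_{\theta'}\le3V_m^{\theta}(\phi)(\theta/\theta')^m$.

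Essentially nothing here is a genuine obstacle; the only points that need a little care are the double-averaging identity in (ii) (which replaces a difference of averages by a single average of differences) and the case split in (iv), where one must check that the $k\le m-1$ contribution is absorbed by the $(\theta/\theta')^m$ factor, which is what produces the constant $3$ in the final bound.
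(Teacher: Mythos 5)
Your proof is correct and follows essentially the same route as the paper: averaging over cylinders for (i)--(iii), the double-integral trick with the case split $k\ge m$ versus $k<m$ for (ii), and the same two-regime estimate of $\var_k(\phi-E_m\phi)/(\theta')^k$ for (iv). The only cosmetic difference is that for $k\ge m$ you bound $\var_k(\phi-E_m\phi)\le2\var_k(\phi)$ via (ii), whereas the paper uses the exact identity $\var_k(\phi-E_m\phi)=\var_k(\phi)$ (since $E_m\phi$ is constant on $k$-cylinders); both give the stated constant $3$.
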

\begin{proof}
(i) is obvious. 

(ii)\ Let $\omega,\omega^{\prime}\in\Sigma_{{\bf A}}^+$ satisfy $\omega|k=\omega^{\prime}|k$. 
We show that $|(E_m\phi)(\omega)-(E_m\phi)(\omega^{\prime})|\le\var_k(\phi)$.
We first assume $k\ge m$.
Then, $(E_m\phi)(\omega)=(E_m\phi)(\omega^{\prime})$.
We next assume $k<m$. 
Then,
\begin{align*}
|(E_m\phi)&(\omega)-(E_m\phi)(\omega^{\prime})|\\
\le&\frac{1}{\mu([\omega|m])\,\mu([\omega^{\prime}|m])}\int_{[\omega|m]}\left(\int_{[\omega^{\prime}|m]}|\phi(\xi)-\phi(\xi^{\prime})|\,\mu(d\xi^{\prime})\right)\mu(d\xi).
\end{align*}
Let $\xi\in[\omega|m]$ and $\xi^{\prime}\in[\omega^{\prime}|m]$. 
Then, $\xi|k=\omega|k=\omega^{\prime}|k=\xi^{\prime}|k$ since $k<m$, and hence, $|\phi(\xi)-\phi(\xi^{\prime})|\le\var_k(\phi)$. 
Thus, $|(E_m\phi)(\omega)-(E_m\phi)(\omega^{\prime})|\le\var_k(\phi)$.

(iii)\ Let $\omega\in\Sigma_{{\bf A}}^+$. 
We have
\[
|\phi(\omega)-(E_m\phi)(\omega)|\le\frac{1}{\mu([\omega|m])}\int_{[\omega|m]}|\phi(\omega)-\phi(\omega^{\prime})|\,\mu(d\omega^{\prime}).
\]
If $\omega^{\prime}\in[\omega|m]$, then $|\phi(\omega)-\phi(\omega^{\prime})|\le\var_m(\phi)\le V_m^{\theta}(\phi)\theta^m$. 
Thus, (iii) follows.

(iv)\ By (iii), we have $\|\phi-E_m\phi\|_{\infty}\le V_m^{\theta}(\phi)(\theta/\theta^{\prime})^m$.
Therefore, it is enough to show that
\begin{equation}
\label{ineq:206}
\frac{\var_k(\phi-E_m\phi)}{(\theta^{\prime})^k}\le2V_m^{\theta}(\phi)\left(\frac{\theta}{\theta^{\prime}}\right)^m,\quad k\in\N\cup\{0\}.
\end{equation}
First we assume $k\ge m$. 
Then, 
\[
\var_k(\phi-E_m\phi)=\var_k(\phi)\le V_k^{\theta}(\phi)\theta^k\le V_m^{\theta}(\phi)(\theta/\theta^{\prime})^m(\theta^{\prime})^k,
\]
and hence, $\var_k(\phi-E_m\phi)/(\theta^{\prime})^k\le V_m^{\theta}(\phi)(\theta/\theta^{\prime})^m$.
Next we assume $k<m$. 
From (iii), $\var_k(\phi-E_m\phi)\le2\|\phi-E_m\phi\|_{\infty}\le2V_m^{\theta}(\phi)\theta^m$,
and hence, $\var_k(\phi-E_m\phi)/(\theta^{\prime})^k\le2V_m^{\theta}(\phi)(\theta/\theta^{\prime})^m(\theta^{\prime})^{m-k}\le2V_m^{\theta}(\phi)(\theta/\theta^{\prime})^m$. Combining, we obtain (\ref{ineq:206}).
\end{proof}

We note the next easy inequality:
\begin{equation}
\label{ineq:256}
|e^z-e^{z^{\prime}}|\le3e^{\max(\Re z,\Re z^{\prime})}|z-z^{\prime}|,\quad z,z^{\prime}\in\C.
\end{equation}
The following Lasota-Yorke type inequality is well known and a key tool for the proofs of the main results. 

\begin{lemma}
\label{lem:302}
For $f,\phi\in C(\Sigma_{{\bf A}}^+)$ and $k\in\N$, we have
\[
\var_{k}(\mathscr{L}_{f}\phi)\le Ne^{\max\Re f}\{3\var_{k+1}(f)\|\phi\|_{\infty}+\var_{k+1}(\phi)\}.
\]
\end{lemma}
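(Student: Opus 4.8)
The plan is to estimate $\var_k(\mathscr{L}_f\phi)$ directly from the definition, comparing $(\mathscr{L}_f\phi)(\omega)$ and $(\mathscr{L}_f\phi)(\omega')$ for two points $\omega,\omega'\in\Sigma_{\bf A}^+$ agreeing in their first $k$ coordinates. The main structural observation is that the preimages under $\sigma_{\bf A}$ of such $\omega$ and $\omega'$ can be paired up: a preimage $\eta$ of $\omega$ has the form $\eta=j\omega$ (concatenation of a symbol $j$ with $\omega$, whenever ${\bf A}(j\omega_0)=1$), and the corresponding preimage of $\omega'$ is $\eta'=j\omega'$; since $\omega$ and $\omega'$ agree on coordinates $0,\dots,k-1$, the points $\eta$ and $\eta'$ agree on coordinates $0,\dots,k$. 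There are at most $N$ admissible symbols $j$, which produces the factor $N$.

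Carrying this out, first I would write
\[
(\mathscr{L}_f\phi)(\omega)-(\mathscr{L}_f\phi)(\omega')=\sum_{j:\,{\bf A}(j\omega_0)=1}\bigl(e^{f(j\omega)}\phi(j\omega)-e^{f(j\omega')}\phi(j\omega')\bigr),
\]
valid because $\omega_0=\omega'_0$ so the index set of admissible $j$ is the same on both sides. Then for each fixed $j$ I would split the summand in the standard way,
\[
e^{f(j\omega)}\phi(j\omega)-e^{f(j\omega')}\phi(j\omega')=e^{f(j\omega)}\bigl(\phi(j\omega)-\phi(j\omega')\bigr)+\bigl(e^{f(j\omega)}-e^{f(j\omega')}\bigr)\phi(j\omega').
\]
For the first term I bound $|e^{f(j\omega)}|\le e^{\max\Re f}$ and $|\phi(j\omega)-\phi(j\omega')|\le\var_{k+1}(\phi)$, using that $j\omega$ and $j\omega'$ agree on the first $k+1$ coordinates. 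For the second term I apply the elementary inequality \eqref{ineq:256} with $z=f(j\omega)$, $z'=f(j\omega')$, giving $|e^{f(j\omega)}-e^{f(j\omega')}|\le 3e^{\max\Re f}|f(j\omega)-f(j\omega')|\le 3e^{\max\Re f}\var_{k+1}(f)$, together with $|\phi(j\omega')|\le\|\phi\|_\infty$. Summing over the at most $N$ admissible values of $j$ and taking the supremum over all such pairs $\omega,\omega'$ yields exactly
\[
\var_k(\mathscr{L}_f\phi)\le Ne^{\max\Re f}\bigl\{3\var_{k+1}(f)\|\phi\|_\infty+\var_{k+1}(\phi)\bigr\}.
\]

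There is no real obstacle here; the only point requiring a little care is the bookkeeping with admissible words — namely that the map $j\mapsto j\omega$ is a bijection from the set of symbols $j$ with ${\bf A}(j\omega_0)=1$ onto $\sigma_{\bf A}^{-1}(\{\omega\})$, and that this index set depends only on $\omega_0$, hence is unchanged when $\omega$ is replaced by $\omega'$. Once that is in place the rest is the routine "add and subtract" estimate above.
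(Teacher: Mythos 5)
Your proof is correct and follows essentially the same argument as the paper: split each summand as $e^{f(i\omega)}(\phi(i\omega)-\phi(i\omega'))+(e^{f(i\omega)}-e^{f(i\omega')})\phi(i\omega')$, bound the two pieces by $e^{\max\Re f}\var_{k+1}(\phi)$ and $3e^{\max\Re f}\var_{k+1}(f)\|\phi\|_{\infty}$ via the elementary inequality (\ref{ineq:256}), and sum over the at most $N$ admissible symbols, using that the index set depends only on $\omega_0=\omega'_0$. No gaps.
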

\begin{proof}
Let $\omega,\,\omega^{\prime}\in\Sigma_{{\bf A}}^+$ satisfy $\omega|k=\omega^{\prime}|k$.
Since $\omega_{0}=\omega^{\prime}_{0}$, we have $\{i:{\bf A}(i\omega_0)=1\}=\{i:{\bf A}(i\omega_0^{\prime})=1\}$. 
Therefore, 
\begin{align*}
|(\mathscr{L}_{f}\phi)(&\omega)-(\mathscr{L}_{f}\phi)(\omega^{\prime})|\\
\le&\sum_{i:{\bf A}(i\omega_0)=1}\{|e^{f(i\omega)}||\phi(i\omega)-\phi(i\omega^{\prime})|+|e^{f(i\omega)}-e^{f(i\omega^{\prime})}||\phi(i\omega^{\prime})|\}.
\end{align*}
Let ${\bf A}(i\omega_0)=1$. 
We easily have $|e^{f(i\omega)}||\phi(i\omega)-\phi(i\omega^{\prime})|\le e^{\max\Re f}\var_{k+1}(\phi)$. 
Moreover, by (\ref{ineq:256}), we have $|e^{f(i\omega)}-e^{f(i\omega^{\prime})}||\phi(i\omega)|\le3e^{\max\Re f}\var_{k+1}(f)\|\phi\|_{\infty}$. 
Thus, the desired inequality holds.
\end{proof}

\section{Some estimates for the proofs of the main results}
\label{sec:estimates}

Fix a sequence $\{\theta_m\}_{m\in\N}$ satisfying (\ref{cond:227}).
Recall from (\ref{def:787}) and (\ref{def:788}) the definitions of the space $\mcB(\{\theta_m\})$ and the norm $\|\cdot\|_{\mcB(\{\theta_m\})}$, respectively.
In this section, we write $\mcB=\mcB(\{\theta_m\})$ and $\|\cdot\|=\|\cdot\|_{\mcB(\{\theta_m\})}$ for the sake of simplicity.
We set
\begin{equation}
\label{eq:224}
C=\sup_{m\in\N}\theta_m.
\end{equation}

We begin with the following easy lemma (we omit the proof):

\begin{lemma}
\label{lem:293}
For $m\in\N$, the following three assertions hold:
\begin{itemize}
\item[(i)]If $\theta_m=0$, then $\mcB\subset L_{m-1}$.
\item[(ii)]If $\theta_m>0$, then $L_{m-1}\subset\mcB$. 
\item[(iii)]If $m\ge2,\theta_m=0$ and $\theta_{m-1}>0$, then $\mcB=L_{m-1}$.
\end{itemize}
\end{lemma}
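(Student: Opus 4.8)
The plan is to read the three statements directly off the definition (\ref{def:787}) of $\mcB$, using only two elementary monotonicity facts: for every $\phi$ the sequence $m\mapsto\var_m(\phi)$ is non-increasing, since the supremum in (\ref{def:080}) is taken over an ever smaller family of pairs $\omega,\omega'$; and $\theta_1\ge\theta_2\ge\cdots\ge0$ by (\ref{cond:227}). In particular, $\phi\in L_{m-1}$ is equivalent to $\var_{m-1}(\phi)=0$, and this forces $\var_k(\phi)=0$ for all $k\ge m-1$. The statements for $m=1$ being trivial, I will argue for $m\ge2$.

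For (i) I would fix $\phi\in\mcB$ together with a constant $C\ge0$ such that $\var_k(\phi)\le C\theta_{k+1}^{\,k}$ for all $k\in\N\cup\{0\}$, and specialize this inequality to $k=m-1$, obtaining $\var_{m-1}(\phi)\le C\theta_m^{\,m-1}$. Since $\theta_m=0$ and $m-1\ge1$, the right-hand side is $0$, so $\var_{m-1}(\phi)=0$, i.e.\ $\phi\in L_{m-1}$; hence $\mcB\subset L_{m-1}$.

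For (ii) I would check that every $\phi\in L_{m-1}$ satisfies the defining condition of $\mcB$. Such a $\phi$ lies in $V$ and has $\var_k(\phi)=0$ for all $k\ge m-1$, so a bound $\var_k(\phi)\le C\theta_{k+1}^{\,k}$ is only in question for the finitely many indices $k=0,1,\dots,m-2$; for each of these $k+1\le m-1$, whence $\theta_{k+1}\ge\theta_{m-1}\ge\theta_m>0$ and therefore $\theta_{k+1}^{\,k}>0$ (the value at $k=0$ being $\theta_1^{0}=1$). Taking $C=\max_{0\le k\le m-2}\var_k(\phi)\,\theta_{k+1}^{-k}$, a finite maximum, yields $\phi\in\mcB$, so $L_{m-1}\subset\mcB$. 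Statement (iii) then follows by combining the two: applying (i) at the index $m$ (legitimate since $\theta_m=0$) gives $\mcB\subset L_{m-1}$, while the reverse inclusion $L_{m-1}\subset\mcB$ is obtained by repeating the argument just given, the only input being that $\theta_{k+1}>0$ for $0\le k\le m-2$, which is now supplied by the hypothesis $\theta_{m-1}>0$ in place of $\theta_m>0$. Hence $\mcB=L_{m-1}$.

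I do not expect a genuine obstacle here; the only points that call for attention are the index bookkeeping — the constraint at position $k$ bounds $\var_k$ in terms of $\theta_{k+1}$, not $\theta_k$ — and making sure the right-hand side $\theta_{k+1}^{\,k}$ is strictly positive exactly where it must be, in particular that the boundary term $\theta_1^{0}=1$ at $k=0$ causes no difficulty.
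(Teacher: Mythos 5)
The paper gives no proof of this lemma (it is stated with ``we omit the proof''), and your verification is exactly the elementary argument intended: for $m\ge2$ your treatment of (i), (ii), (iii) is correct, including the two points that actually need care -- the index shift ($\var_k$ is controlled by $\theta_{k+1}^k$, not $\theta_k^k$) and the fact that in (iii) one cannot simply quote (ii) at index $m$ but must rerun it using $\theta_{m-1}>0$ and monotonicity. Since part (iii), which is what the main results use, is only stated for $m\ge2$, this covers everything essential.

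The one inaccurate sentence is the dismissal of $m=1$ as ``trivial''. Part (ii) at $m=1$ is indeed trivial, but part (i) at $m=1$ is not: under the convention $\theta_1^{\,0}=1$ that you yourself invoke, the $k=0$ constraint in (\ref{def:787}) is vacuous, so $\theta_1=0$ only forces $\var_k(\phi)=0$ for $k\ge1$, i.e.\ $\mcB\subset L_1$; for instance $1_{[1]}$ then lies in $\mcB\setminus L_0$, so $\mcB\subset L_0$ fails unless one reads $\theta_1^{\,0}=0^0$ as $0$ in (\ref{def:787}). This boundary case is really an ambiguity in the paper's statement (it is invoked only in the first case of Corollary \ref{cor:488}, whose conclusion survives anyway because $E_m(L_1)=L_1$ for $m\ge1$), but your write-up should either adopt the convention explicitly or restrict (i) to $m\ge2$ rather than call the case trivial.
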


Let $E_m$ be as in (\ref{def:228}).

\begin{corollary}
\label{cor:488}
For $m\in\N$, we have $E_m(\mcB)\subset\mcB$.
\end{corollary}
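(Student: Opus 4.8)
The plan is to deduce the corollary immediately from the variation estimate of Lemma \ref{lem:681}(ii). Fix $m\in\N$ and let $\phi\in\mcB$. By the definition (\ref{def:787}) of $\mcB=\mcB(\{\theta_m\})$, there is a constant $C\ge0$ with $\var_k(\phi)\le C\theta_{k+1}^k$ for every $k\in\N\cup\{0\}$. First I would observe that $E_m\phi$ is well defined: each cylinder $[\omega|m]$ is a non-empty open subset of $\Sigma_{{\bf A}}^+$ and hence has positive $\mu$-measure by the standing assumption on $\mu$. Moreover $E_m\phi\in L_m$, so in particular $E_m\phi\in V$, since a locally constant function is super-continuous.

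By Lemma \ref{lem:681}(ii) applied to $\phi$ we have $\var_k(E_m\phi)\le\var_k(\phi)\le C\theta_{k+1}^k$ for every $k\in\N\cup\{0\}$; that is, $E_m\phi$ satisfies the defining inequality of $\mcB$ with the \emph{same} constant $C$. Hence $E_m\phi\in\mcB$, and since $\phi\in\mcB$ was arbitrary, $E_m(\mcB)\subset\mcB$. (Combined with $\|E_m\phi\|_{\infty}\le\|\phi\|_{\infty}$, which is immediate from (\ref{def:228}) and the fact that $\mu$ is a probability measure, this even gives $\|E_m\phi\|\le\|\phi\|$, so that $E_m$ restricts to a contraction of $\mcB$; but only the inclusion is needed here.) There is no genuine obstacle: once Lemma \ref{lem:681}(ii) is in place, the only point worth spelling out is that $E_m\phi$ lands in $V$, which holds automatically because $E_m\phi$ is locally constant.
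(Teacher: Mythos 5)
Your proof is correct, but it follows a genuinely different route from the paper's. The paper argues structurally: it invokes Lemma \ref{lem:293} to split into cases according to whether some $\theta_m$ vanishes, identifying $\mcB$ with a space $L_{m_0-1}$ of locally constant functions in the degenerate cases, and in the non-degenerate case ($\theta_m>0$ for all $m$) using $\bigcup_{k\ge0}L_k\subset\mcB$ to conclude $E_m(\mcB)\subset L_m\subset\mcB$. You instead apply the variation monotonicity $\var_k(E_m\phi)\le\var_k(\phi)$ from Lemma \ref{lem:681}(ii) directly, so that $E_m\phi$ satisfies the defining inequality of $\mcB$ with the same constant $C$; together with the observation that $E_m\phi$ is locally constant (hence in $V$), this gives the inclusion in one stroke, with no case analysis, and it works uniformly even when some $\theta_{k+1}=0$ (then $\var_k(\phi)=0$ forces $\var_k(E_m\phi)=0$). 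Your argument also yields the quantitative bonus $\|E_m\phi\|_{\mcB(\{\theta_m\})}\le\|\phi\|_{\mcB(\{\theta_m\})}$, i.e.\ $E_m$ restricts to a contraction of $\mcB$, which the paper's structural argument does not record; what the paper's route buys instead is a description of $\mcB$ in the degenerate cases that it has already paid for in Lemma \ref{lem:293} and reuses elsewhere. Both proofs are valid, and yours is arguably the more economical one.
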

\begin{proof}
First, we consider the case in which $\theta_1=0$. 
Then, $\mcB=L_0$ from Lemma \ref{lem:293} (i), and hence, $E_m(L_0)=L_0$. 

Next, we consider the case in which $\theta_1>0$ and $\theta_{m}=0$ for some $m\ge2$. 
Take $m_0\ge2$ so that $\theta_{m_0}=0$ and $\theta_{m_0-1}>0$.
From Lemma \ref{lem:293} (iii), $\mcB=L_{m_0-1}$. 
Thus, if $m\ge m_0-1$, then $E_m(L_{m_0-1})=L_{m_0-1}$, and if $m<m_0-1$, then $E_m(L_{m_0-1})\subset L_{m}\subset L_{m_0-1}$.

Finally, we consider the case in which $\theta_m>0$ for all $m\ge2$. 
From Lemma \ref{lem:293} (ii), $\bigcup_{m\ge0}L_m\subset\mcB$. Thus, $E_m(\mcB)\subset L_m\subset\mcB$.
\end{proof}

\begin{lemma}
\label{lem:046}
Let $m\in\N,k\in\N\cup\{0\}$ and $\phi\in\mcB$.
\begin{itemize}
\item[(i)]$\|\phi-E_m\phi\|_{\infty}\le\|\phi\|\theta_{m+1}^m$.
\item[(ii)]$\var_k(\phi-E_m\phi)\le2\|\phi\|\theta_{m+1}^m$. 
Moreover, if $k\ge m$, then $\var_k(\phi-E_m\phi)\le\|\phi\|\theta_{k+1}^k$.
\item[(iii)]If $\theta_{m+1}\le1$, then $\|I-E_m\|_{\mcB\to\mcB}\le3$.
\end{itemize}
\end{lemma}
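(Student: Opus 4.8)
The plan is to reduce all three parts to one bookkeeping fact about the norm on $\mcB$: since the condition ``$\var_j(\phi)\le C\theta_{j+1}^j$ for all $j\in\N\cup\{0\}$'' is closed in $C\ge0$, the infimum in (\ref{def:788}) is attained, so every $\phi\in\mcB$ satisfies
\[
\var_j(\phi)\le\|\phi\|\,\theta_{j+1}^j,\qquad j\in\N\cup\{0\},
\]
(the extra $\|\phi\|_\infty\ge0$ only helps). Combined with the trivial bound $\var_j(\psi)\le2\|\psi\|_\infty$ for any bounded $\psi$, and the averaging argument already used in the proof of Lemma \ref{lem:681}, this is essentially all that is needed.

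For (i), I would run the averaging estimate directly: for fixed $\omega$, $(E_m\phi)(\omega)$ is the $\mu$-average of $\phi(\omega')$ over $\omega'\in[\omega|m]$, and every such $\omega'$ agrees with $\omega$ in coordinates $0,\dots,m-1$, so $|\phi(\omega)-\phi(\omega')|\le\var_m(\phi)$; averaging gives $\|\phi-E_m\phi\|_\infty\le\var_m(\phi)\le\|\phi\|\theta_{m+1}^m$. For (ii), the first inequality is immediate from (i) together with $\var_k(\phi-E_m\phi)\le2\|\phi-E_m\phi\|_\infty$. For the ``moreover'' part I would use that $E_m\phi\in L_m$, so $\var_k(E_m\phi)=0$ whenever $k\ge m$ by monotonicity of $\var$ in the index; hence $\var_k(\phi-E_m\phi)\le\var_k(\phi)\le\|\phi\|\theta_{k+1}^k$ for $k\ge m$.

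For (iii), assume $\theta_{m+1}\le1$. First note $I-E_m$ maps $\mcB$ into $\mcB$ by Corollary \ref{cor:488} and linearity. To bound $\|\phi-E_m\phi\|$ I estimate its two summands. By (i) and $0\le\theta_{m+1}\le1$, we get $\|\phi-E_m\phi\|_\infty\le\|\phi\|\theta_{m+1}^m\le\|\phi\|$. For the seminorm part I claim $\var_k(\phi-E_m\phi)\le2\|\phi\|\theta_{k+1}^k$ for every $k\in\N\cup\{0\}$: when $k\ge m$ this is (ii) (indeed with constant $1$); when $k<m$, (ii) gives $\var_k(\phi-E_m\phi)\le2\|\phi\|\theta_{m+1}^m$, and since $\{\theta_j\}$ is non-increasing and $0\le\theta_{m+1}\le1$,
\[
\theta_{m+1}^m=\theta_{m+1}^k\,\theta_{m+1}^{m-k}\le\theta_{k+1}^k\cdot1,
\]
so the bound follows. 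Feeding this into (\ref{def:788}) yields $\|\phi-E_m\phi\|\le\|\phi\|+2\|\phi\|=3\|\phi\|$, i.e.\ $\|I-E_m\|_{\mcB\to\mcB}\le3$.

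I do not expect a genuine obstacle here; the only points needing care are (a) confirming the infimum in the definition of $\|\cdot\|_{\mcB(\{\theta_m\})}$ is attained so that $\var_j(\phi)\le\|\phi\|\theta_{j+1}^j$ holds exactly, and (b) the index split in (iii), where one must exploit both the monotonicity of $\{\theta_j\}$ and $\theta_{m+1}\le1$ to compare $\theta_{m+1}^m$ with $\theta_{k+1}^k$ for $k<m$. Everything else is routine.
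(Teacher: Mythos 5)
Your proposal is correct and follows essentially the same route as the paper: (i) via the averaging argument of Lemma \ref{lem:681}(iii) together with $\var_m(\phi)\le\|\phi\|\theta_{m+1}^m$, (ii) from (i) plus $\var_k\le2\|\cdot\|_\infty$ and the fact that $E_m\phi\in L_m$, and (iii) by the same $k\ge m$ versus $k<m$ split using monotonicity of $\{\theta_j\}$ and $\theta_{m+1}\le1$ to get the constant $1+2=3$. The only cosmetic difference is that you justify $\var_j(\phi)\le\|\phi\|\theta_{j+1}^j$ by noting the infimum in (\ref{def:788}) is attained, which the paper uses implicitly; this changes nothing of substance.
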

\begin{proof}
(i)\ If $\omega,\omega^{\prime}\in\Sigma_{{\bf A}}^+$ satisfy $\omega|m=\omega^{\prime}|m$, then $|\phi(\omega)-\phi(\omega^{\prime})|\le\var_m(\phi)\le\|\phi\|\theta_{m+1}^m$.
Thus, (i) follows from the same argument as that in the proof of Lemma \ref{lem:681} (iii).  

(ii)\ From (i), $\var_k(\phi-E_m\phi)\le2\|\phi-E_m\phi\|_{\infty}\le2\|\phi\|\theta_{m+1}^m$.
If $k\ge m$, then $\var_k(\phi-E_m\phi)=\var_k(\phi)\le\|\phi\|\theta_{k+1}^k$. 

(iii)\ Take $\phi\in\mcB$ so that $\|\phi\|\le1$. 
By (i) and $\theta_{m+1}\le1$, we have $\|\phi-E_m\phi\|_{\infty}\le1$.
Hence, it is enough to show that $\var_k(\phi-E_m\phi)\le2\theta_{k+1}^k$ 
for $k\in\N\cup\{0\}$.
If $k\ge m$, then, from the latter part of (ii), $\var_k(\phi-E_m\phi)\le\theta_{k+1}^k$.
If $k<m$, then, from the former part of (ii) and $\theta_{m+1}\le1$, $\var_k(\phi-E_m\phi)\le2\theta_{m+1}^m\le2\theta_{k+1}^k$.
\end{proof}

Take $b_1,b_2>0$. 
We consider the following condition for $g\in V$:
\begin{equation}
\label{condi:029}
e^{\max\Re g}\le b_1\quad\mbox{and}\quad\var_{k}(g)\le b_2\theta_k^k\quad\mbox{for}\quad k\in\N\cup\{0\}.
\end{equation}
\begin{lemma}
\label{lem:035}
There exists $C_1>0$, depending only on $b_1$ and $b_2$, such that the following inequality holds for $k\in\N\cup\{0\},\phi\in\mcB$ and $g\in V$ satisfying (\ref{condi:029}):
\[
\var_{k}(\mathscr{L}_g\phi)\le C_1\|\phi\|\theta_{k+1}^k.
\]
\end{lemma}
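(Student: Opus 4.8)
The plan is to reduce everything to the Lasota--Yorke type inequality of Lemma \ref{lem:302} and then bound each of the two resulting terms using the hypotheses. Applying Lemma \ref{lem:302} with $f=g$ (legitimate since $V\subset C(\Sigma_{\mathbf A}^+)$) gives, for every $k\in\N\cup\{0\}$,
\[
\var_k(\mathscr{L}_g\phi)\le Ne^{\max\Re g}\bigl\{3\var_{k+1}(g)\|\phi\|_{\infty}+\var_{k+1}(\phi)\bigr\}.
\]
Now I control the four factors on the right. From the first half of (\ref{condi:029}), $e^{\max\Re g}\le b_1$. From the second half of (\ref{condi:029}) (with $k$ replaced by $k+1$), $\var_{k+1}(g)\le b_2\theta_{k+1}^{k+1}$. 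Since $\|\phi\|_{\infty}\le\|\phi\|$ by (\ref{def:788}), and since $\phi\in\mcB$ forces $\var_{k+1}(\phi)\le\|\phi\|\theta_{k+2}^{k+1}$ (again by (\ref{def:788}), as already used in Lemma \ref{lem:046}), we obtain
\[
\var_k(\mathscr{L}_g\phi)\le Nb_1\|\phi\|\bigl\{3b_2\theta_{k+1}^{k+1}+\theta_{k+2}^{k+1}\bigr\}.
\]

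It then remains to pull out a factor $\theta_{k+1}^k$ from the braces. By the monotonicity in (\ref{cond:227}) we have $\theta_{k+2}\le\theta_{k+1}\le C$, where $C=\sup_{m\in\N}\theta_m$ as in (\ref{eq:224}); hence $\theta_{k+1}^{k+1}\le C\theta_{k+1}^k$ and $\theta_{k+2}^{k+1}\le\theta_{k+1}^{k+1}\le C\theta_{k+1}^k$. Substituting these gives
\[
\var_k(\mathscr{L}_g\phi)\le Nb_1C(3b_2+1)\,\|\phi\|\,\theta_{k+1}^k,
\]
so the lemma holds with $C_1=Nb_1C(3b_2+1)$, which depends only on $b_1$ and $b_2$ (the quantities $N$ and $C$ being fixed throughout this section).

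There is no real obstacle here; the only point deserving care is the exponent bookkeeping in the last step, namely passing from the exponent $k+1$ naturally produced by $\var_{k+1}$ to the exponent $k$ demanded by the target bound, which is exactly what the monotonicity of $\{\theta_m\}$ together with the uniform bound $C$ supplies. It is also worth noting that the edge case $k=0$ is covered by the same chain of inequalities: there $\theta_{k+1}^k=\theta_1^0=1$ and the estimate reads $\var_0(\mathscr{L}_g\phi)\le C_1\|\phi\|$.
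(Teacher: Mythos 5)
Your argument for $k\ge1$ is exactly the paper's: apply Lemma \ref{lem:302}, use (\ref{condi:029}) and the membership $\phi\in\mcB$ to bound $\var_{k+1}(g)$ and $\var_{k+1}(\phi)$, and absorb the extra power of $\theta$ via monotonicity and $C=\sup_m\theta_m$ from (\ref{eq:224}). The constant you obtain has the same shape as the paper's, and the dependence on $C$ is harmless since $\{\theta_m\}$ is fixed in this section.

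The gap is the case $k=0$. Lemma \ref{lem:302} is stated only for $k\in\N$, and this is not a cosmetic restriction: its proof uses that $\omega|k=\omega^{\prime}|k$ with $k\ge1$ forces $\omega_0=\omega_0^{\prime}$, so that the two sums in $\mathscr{L}_g\phi(\omega)$ and $\mathscr{L}_g\phi(\omega^{\prime})$ run over the same set of symbols $i$ with ${\bf A}(i\omega_0)=1$. For $k=0$ the constraint $\omega|0=\omega^{\prime}|0$ is vacuous, the preimage alphabets may differ, and the inequality of Lemma \ref{lem:302} can actually fail: take $g$ and $\phi$ constant, so the right-hand side is $0$, while
\[
(\mathscr{L}_g\phi)(\omega)=e^{g}\phi\cdot\#\{i:{\bf A}(i\omega_0)=1\}
\]
varies with $\omega_0$ whenever the columns of ${\bf A}$ contain different numbers of ones. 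So your closing remark that ``the edge case $k=0$ is covered by the same chain of inequalities'' is not justified. The fix is cheap and is what the paper does: bound $\var_0(\mathscr{L}_g\phi)\le2\|\mathscr{L}_g\phi\|_{\infty}\le2Ne^{\max\Re g}\|\phi\|_{\infty}\le2Nb_1\|\phi\|$ and note $\theta_1^0=1$, then take $C_1=\max\bigl(2Nb_1,\;Nb_1C(3b_2+1)\bigr)$. With that correction your proof is complete and coincides with the paper's.
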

\begin{proof}
We have $\var_0(\mathscr{L}_g\phi)\le2\|\mathscr{L}_g\phi\|_{\infty}\le2Ne^{\max\Re g}\|\phi\|_{\infty}\le2Nb_1\|\phi\|$.
Let $k\in\N$.
By Lemma \ref{lem:302}, we have $\var_{k}(\mathscr{L}_{g}\phi)\le Nb_1\left\{3\var_{k+1}(g)\|\phi\|_{\infty}+\var_{k+1}(\phi)\right\}$. 
We also have $\var_{k+1}(g)\le b_2\theta_{k+1}^{k+1}\le Cb_2\theta_{k+1}^k$
and $\var_{k+1}(\phi)\le\|\phi\|\theta_{k+2}^{k+1}\le C\|\phi\|\theta_{k+1}^{k}$.
Hence, the assertion holds for $C_1=\max(2Nb_1,Nb_1(3Cb_2+C))$.
\end{proof}

For $g\in V$ and $m,q\in\N$, we define the two operators $K_{g,m},K_{g,m}^{(q)}$ by 
\[
K_{g,m}=\mathscr{L}_g\circ E_m,\quad K_{g,m}^{(q)}=\mathscr{L}_g^q-(\mathscr{L}_g-K_{g,m})^q.
\]
Notice that $K_{g,m}^{(1)}=K_{g,m}$.

\begin{lemma}
\label{lem:072}
Let $g\in V$. 
If there exists $b>0$ such that $\var_k(g)\le b\theta_k^k$ for $k\in\N$, then the following three assertions hold:
\begin{itemize}
\item[(i)]$\mathscr{L}_g(\mcB)\subset\mcB$.
\item[(ii)]$K_{g,m}^{(q)}(\mcB)\subset\mcB$ for $m,q\in\N$.
\item[(iii)]$\rank K_{g,m}^{(q)}\le q\rank E_m$ for $m,q\in\N$.
\end{itemize}
\end{lemma}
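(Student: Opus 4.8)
The plan is to derive all three assertions from results already established, so the argument should be short. For (i), I would simply apply Lemma \ref{lem:035}: since $g\in V$ is continuous on the compact space $\Sigma_{{\bf A}}^+$ it is bounded, so $g$ satisfies condition (\ref{condi:029}) for a suitable choice of $b_1,b_2$ (one may take $b_1=e^{\max\Re g}$ and $b_2$ large, using the hypothesis $\var_k(g)\le b\theta_k^k$). Lemma \ref{lem:035} then furnishes a constant $C_1$ with $\var_k(\mathscr{L}_g\phi)\le C_1\|\phi\|\theta_{k+1}^k$ for every $k\in\N\cup\{0\}$ and every $\phi\in\mcB$. Since $\mathscr{L}_g$ maps $V$ into $V$ we have $\mathscr{L}_g\phi\in V$, and the displayed estimates are exactly what is required for membership in $\mcB=\mcB(\{\theta_m\})$; hence $\mathscr{L}_g(\mcB)\subset\mcB$.

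For (ii), I would combine (i) with Corollary \ref{cor:488}. As $E_m(\mcB)\subset\mcB$ and $\mathscr{L}_g(\mcB)\subset\mcB$, the operator $K_{g,m}=\mathscr{L}_g\circ E_m$ maps $\mcB$ into $\mcB$; since $\mcB$ is a linear subspace of $V$, so does $\mathscr{L}_g-K_{g,m}$, and therefore so do $\mathscr{L}_g^q$ and $(\mathscr{L}_g-K_{g,m})^q$. Hence $K_{g,m}^{(q)}=\mathscr{L}_g^q-(\mathscr{L}_g-K_{g,m})^q$ maps $\mcB$ into $\mcB$.

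For (iii), I would expand $K_{g,m}^{(q)}$ using the telescoping identity
\[
A^q-B^q=\sum_{j=0}^{q-1}A^j(A-B)B^{q-1-j}
\]
with $A=\mathscr{L}_g$ and $B=\mathscr{L}_g-K_{g,m}$, so that $A-B=K_{g,m}=\mathscr{L}_g\circ E_m$ and each of the $q$ summands equals $\mathscr{L}_g^{\,j+1}\circ E_m\circ(\mathscr{L}_g-K_{g,m})^{q-1-j}$. Every such summand factors through $E_m$, hence has rank at most $\rank E_m$, and subadditivity of the rank then gives $\rank K_{g,m}^{(q)}\le q\,\rank E_m$.

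I do not expect a real obstacle here; the content is essentially bookkeeping. The only point worth care is that the operator identities in (ii) and (iii) should be applied only once each composite is known to be a well-defined self-map of $\mcB$, which is exactly what (i) together with Corollary \ref{cor:488} supplies; the degenerate cases $\theta_m=0$, in which $\mcB$ collapses to some $L_{m-1}$, are already absorbed into Lemma \ref{lem:293} and Corollary \ref{cor:488}.
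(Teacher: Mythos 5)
Your proposal is correct and follows essentially the same route as the paper: (i) is exactly the application of Lemma \ref{lem:035} (after fixing admissible $b_1,b_2$ in (\ref{condi:029})), (ii) combines (i) with Corollary \ref{cor:488} just as the paper does, and (iii) uses the same telescoping identity $\mathscr{L}_g^q-(\mathscr{L}_g-K_{g,m})^q=\sum_{k}\mathscr{L}_g^{q-1-k}K_{g,m}(\mathscr{L}_g-K_{g,m})^{k}$ together with subadditivity of rank and $\rank K_{g,m}\le\rank E_m$. No gaps; only the indexing of the telescoping sum differs from the paper's, which is immaterial.
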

\begin{proof}
(i) follows from Lemma \ref{lem:035} immediately. 

(ii)\ Thanks to (i), it is enough to show that $K_{g,m}(\mcB)\subset\mcB$. 
From Corollary \ref{cor:488} and (i), we have $K_{g,m}(\mcB)=\mathscr{L}_g(E_m(\mcB))\subset\mathscr{L}_g(\mcB)\subset\mcB$.

(iii)\ We have $K_{g,m}^{(q)}=\sum_{k=0}^{q-1}\mathscr{L}_g^{q-1-k}K_{g,m}(\mathscr{L}_g-K_{g,m})^k$. 
For $k\in\{0,\dots,q-1\}$, $\rank\mathscr{L}_g^{p-1-k}K_{g,m}(\mathscr{L}_g-K_{g,m})^k\le\rank K_{g,m}\le\rank E_m$. 
Thus, we obtain (iii).
\end{proof}

The following lemma plays a key role in the proof of Theorem \ref{thm:919}.

\begin{lemma}
\label{lem:216}
There exists $C_2>0$, depending only on $b_1$ and $b_2$, such that the following two inequalities hold for $m,q\in\N$ with $\theta_{m+1}\le1$ and $g\in V$ satisfying (\ref{condi:029}):
\begin{align}
&\|\mathscr{L}_g\|_{\mcB\to\mcB}\le C_2,
\label{ineq:278}\\
&\|\mathscr{L}_g^q-K^{(q)}_{g,m}\|_{\mcB\to\mcB}\le C_2^q\theta_{m+1}^q.
\label{ineq:094}
\end{align}
\end{lemma}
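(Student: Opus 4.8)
The plan is to prove the two inequalities in turn; (\ref{ineq:278}) is elementary, and (\ref{ineq:094}) is the substantive one. For (\ref{ineq:278}) I would combine Lemma \ref{lem:035} with the trivial sup-norm estimate: Lemma \ref{lem:035}, applied with the given $b_1,b_2$, gives $\var_k(\mathscr{L}_g\phi)\le C_1\|\phi\|\theta_{k+1}^k$ for all $k\in\N\cup\{0\}$ and all $\phi\in\mcB$, which in particular shows $\mathscr{L}_g\phi\in\mcB$, while $\|\mathscr{L}_g\phi\|_\infty\le Ne^{\max\Re g}\|\phi\|_\infty\le b_1N\|\phi\|$ by (\ref{condi:029}); adding the two bounds gives $\|\mathscr{L}_g\phi\|\le(b_1N+C_1)\|\phi\|$, so (\ref{ineq:278}) holds whenever $C_2\ge b_1N+C_1$.

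For (\ref{ineq:094}), the starting point is that, by the definitions of $K_{g,m}$ and $K_{g,m}^{(q)}$,
\[
\mathscr{L}_g^q-K_{g,m}^{(q)}=(\mathscr{L}_g-K_{g,m})^q=(\mathscr{L}_g\circ(I-E_m))^q,
\]
and all the operators appearing here map $\mcB$ into $\mcB$ (by Lemma \ref{lem:072}(i), which applies since $\var_k(g)\le b_2\theta_k^k$, and by Corollary \ref{cor:488}). Since $\mathscr{L}_g\circ(I-E_m)$ is a bounded operator on $\mcB$ (by (\ref{ineq:278}) and Lemma \ref{lem:046}(iii)), submultiplicativity of the operator norm reduces (\ref{ineq:094}) to the single estimate $\|\mathscr{L}_g\circ(I-E_m)\|_{\mcB\to\mcB}\le C_2\theta_{m+1}$. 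To prove this I would fix $\phi\in\mcB$ with $\|\phi\|\le1$, put $\psi=\phi-E_m\phi$, and use Lemma \ref{lem:046}(i),(ii), which give $\|\psi\|_\infty\le\theta_{m+1}^m$, $\var_k(\psi)\le2\theta_{m+1}^m$ for all $k$, and $\var_k(\psi)\le\theta_{k+1}^k$ for $k\ge m$. The sup-part of $\|\mathscr{L}_g\psi\|$ is then immediate: $\|\mathscr{L}_g\psi\|_\infty\le b_1N\|\psi\|_\infty\le b_1N\theta_{m+1}^m\le b_1N\theta_{m+1}$, using $\theta_{m+1}\le1$ and $m\ge1$. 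For the variation part I would invoke Lemma \ref{lem:302}, namely $\var_k(\mathscr{L}_g\psi)\le b_1N\{3\var_{k+1}(g)\|\psi\|_\infty+\var_{k+1}(\psi)\}$ for $k\ge1$ and $\var_0(\mathscr{L}_g\psi)\le2\|\mathscr{L}_g\psi\|_\infty$ for $k=0$, insert $\var_{k+1}(g)\le b_2\theta_{k+1}^{k+1}$ together with the bounds on $\|\psi\|_\infty$ and $\var_{k+1}(\psi)$ above, and split into the cases $k=0$, $1\le k<m$, and $k\ge m$. In each case the target is a bound of the form $\var_k(\mathscr{L}_g\psi)\le C_3\theta_{m+1}\theta_{k+1}^k$ with $C_3$ depending only on $b_1,b_2$; the only inequalities needed are $\theta_{k+1}\theta_{m+1}^{m-1}\le C$ for all $k$, $\theta_{m+1}^m\le\theta_{k+1}^k\theta_{m+1}$ for $k<m$, and $\theta_{k+2}^{k+1}\le\theta_{k+1}^k\theta_{m+1}$ for $k\ge m$, where $C=\sup_{j}\theta_j$ as in (\ref{eq:224}); these all follow from $\theta_1\ge\theta_2\ge\cdots$ together with $\theta_{m+1}\le1$ (distinguishing, for the second one, whether $\theta_{k+1}\ge1$ or $\theta_{k+1}<1$). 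Combining the sup-part with the variation-part gives $\|\mathscr{L}_g\psi\|\le C_4\theta_{m+1}$ for a constant $C_4$ depending only on $b_1,b_2$, and taking $C_2=\max(b_1N+C_1,C_4,1)$ finishes the proof.

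The only real difficulty is the bookkeeping in the last step: obtaining the honest factor $\theta_{m+1}$ in (\ref{ineq:094}) forces one to spread the ``gain'' $\theta_{m+1}^m$ supplied by $I-E_m$ (through $\|\psi\|_\infty$ and through the high-index variations of $\psi$) across all indices $k$, and one has to keep in mind that the hypothesis only provides $\theta_{m+1}\le1$, so for $j\le m$ the numbers $\theta_j$ may exceed $1$ and such stray factors must be absorbed into the constant via $C=\sup_j\theta_j$. Everything else is routine.
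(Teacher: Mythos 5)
Your proposal is correct and follows essentially the same route as the paper: inequality (\ref{ineq:278}) via Lemma \ref{lem:035} plus the trivial sup-norm bound, and (\ref{ineq:094}) by reducing to $q=1$ through $\mathscr{L}_g^q-K_{g,m}^{(q)}=(\mathscr{L}_g-K_{g,m})^q$ and then estimating $\|\mathscr{L}_g(\phi-E_m\phi)\|$ with Lemmas \ref{lem:302} and \ref{lem:046} and the same case analysis in $k$ versus $m$ (the paper splits at $k+1\ge m$ rather than $k\ge m$, which is immaterial). The bookkeeping you outline, including absorbing $\sup_j\theta_j$ into the constant, matches the paper's computation, which yields $C_2=3Nb_1(Cb_2+1)$ for (\ref{ineq:094}).
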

\begin{proof}
We prove (\ref{ineq:278}). 
Let $C_1$ be as in Lemma \ref{lem:035}.
Take $\phi\in\mcB$ so that $\|\phi\|\le1$. 
Then, $\|\mathscr{L}_g\phi\|_{\infty}\le Ne^{\max\Re g}\|\phi\|_{\infty}\le Nb_1\|\phi\|\le Nb_1$. 
Moreover, from Lemma \ref{lem:035}, $\var_k(\mathscr{L}_g\phi)\le C_1\theta_{k+1}^k$ for $k\in\N\cup\{0\}$. 
Thus, (\ref{ineq:278}) holds for $C_2=Nb_1+C_1$.

We prove (\ref{ineq:094}). 
Since $\mathscr{L}_g^q-K^{(q)}_{g,m}=(\mathscr{L}_g-K_{g,m})^q$, we may prove (\ref{ineq:094}) only for $q=1$. 
Take $\phi\in\mcB$ so that $\|\phi\|\le1$. 

First, from Lemma \ref{lem:046} (i), $\|\mathscr{L}_g(\phi-E_m\phi)\|_{\infty}\le Ne^{\max\Re f}\|\phi-E_m\phi\|_{\infty}\le Nb_1\theta_{m+1}^m$. 
This and $\theta_{m+1}\le1$ imply
\begin{equation}
\label{ineq:212}
\|\mathscr{L}_g(\phi-E_m\phi)\|_{\infty}\le Nb_1\theta_{m+1}.
\end{equation}

Next, we show that
\begin{equation}
\label{ineq:306}
\var_{k}(\mathscr{L}_g(\phi-E_m\phi))\le Nb_1(3Cb_2+2)\theta_{m+1}\theta_{k+1}^k,\quad k\in\N\cup\{0\}.
\end{equation}
From (\ref{ineq:212}), $\var_0(\mathscr{L}_g(\phi-E_m\phi))\le2\|\mathscr{L}_g(\phi-E_m\phi)\|_{\infty}\le2Nb_1\theta_{m+1}$.
Let $k\in\N$. 
From Lemmas \ref{lem:302} and \ref{lem:046} (i),
\begin{align*}
&\var_{k}(\mathscr{L}_g(\phi-E_m\phi))\\
&\qquad\le Ne^{\max\Re g}\left\{3\var_{k+1}(g)\|\phi-E_m\phi\|_{\infty}+\var_{k+1}(\phi-E_m\phi)\right\}\\
&\qquad\le Nb_1\{3Cb_2\theta_{k+1}^{k}\theta_{m+1}^m+\var_{k+1}(\phi-E_m\phi)\}.
\end{align*}
Thus, it is enough to show that
\begin{equation}
\label{ineq:711}
\var_{k+1}(\phi-E_m\phi)\le2\theta_{m+1}\theta_{k+1}^k.
\end{equation}
From Lemma \ref{lem:046} (ii), if $k+1\ge m$, then $\var_{k+1}(\phi-E_m\phi)\le\theta_{k+2}^{k+1}=\theta_{k+2}\theta_{k+2}^{k}\le\theta_{m+1}\theta_{k+1}^k$,
and if $k+1<m$,
then $\var_{k+1}(\phi-E_m\phi)\le2\theta_{m+1}^m\le2\theta_{m+1}^{m-k}\theta_{m}^k\le2\theta_{m+1}\theta_{k+1}^k$.
Thus, (\ref{ineq:711}) holds.

By (\ref{ineq:212}) and (\ref{ineq:306}), we conclude that (\ref{ineq:094}) holds for $C_2=3Nb_1(Cb_2+1)$.
\end{proof}

We need the next lemma to prove Theorem \ref{thm:925}.

\begin{lemma}
\label{lem:950}
There exists $C_3>0$, depending only on $b_1$ and $b_2$, such that 
$\mathscr{L}_{g_m}(\mcB)\subset\mcB$ and the following inequality holds
for $m\in\N$ with $\theta_m\le1$ and $g\in V$ satisfying (\ref{condi:029}):
\[
\|\mathscr{L}_g-\mathscr{L}_{g_m}\|_{\mcB\to\mcB}\le C_3\theta_m,
\]
where $g_m=E_mg$.
\end{lemma}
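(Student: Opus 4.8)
\emph{Proof strategy.} The plan is to compare $\mathscr{L}_g$ with $\mathscr{L}_{g_m}$ head-on, by observing that
\[
\bigl((\mathscr{L}_g-\mathscr{L}_{g_m})\phi\bigr)(\omega)=\sum_{\sigma_{{\bf A}}\omega'=\omega}w(\omega')\phi(\omega'),\qquad w:=e^{g}-e^{g_m},
\]
and then running the Lasota--Yorke computation in the proof of Lemma \ref{lem:302} with $w$ in place of $e^{f}$. First I would dispose of the structural points. Since $E_m$ is a (real, positive) averaging operator, $\Re(E_mg)=E_m(\Re g)$, so Lemma \ref{lem:681}(i) gives $\max\Re g_m\le\max\Re g$ and hence $e^{\max\Re g_m}\le b_1$; and Lemma \ref{lem:681}(ii) gives $\var_k(g_m)\le\var_k(g)\le b_2\theta_k^k$ for all $k$. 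Thus $g_m\in L_m\subset V$ again satisfies (\ref{condi:029}), and Lemma \ref{lem:072}(i) applied to $g_m$ yields $\mathscr{L}_{g_m}(\mcB)\subset\mcB$; together with $\mathscr{L}_g(\mcB)\subset\mcB$ this shows $\psi:=(\mathscr{L}_g-\mathscr{L}_{g_m})\phi\in\mcB$ for every $\phi\in\mcB$, so the norm on the left of the asserted inequality is meaningful and it remains to bound $\|\psi\|_\infty$ and $\var_k(\psi)$, $k\in\N\cup\{0\}$, by a constant times $\theta_m\theta_{k+1}^k\|\phi\|$ (with the convention $\theta_1^0=1$).

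Next I would record the weight estimates. Put $u=g-g_m=g-E_mg$. Since $|g(\omega)-(E_mg)(\omega)|\le\var_m(g)$ for all $\omega$ (cf.\ the proof of Lemma \ref{lem:681}(iii)) we get $\|u\|_\infty\le\var_m(g)\le b_2\theta_m^m$; moreover $\var_k(u)\le2\var_k(g)\le2b_2\theta_k^k$ for every $k$, and $\var_k(u)=\var_k(g)$ once $k\ge m$ because $E_mg\in L_m$ is constant on $k$-cylinders. Writing $w=e^{g_m}(e^{u}-1)$ and combining (\ref{ineq:256}), the Leibniz inequality for $\var_k$, and the elementary bound $|e^{z}-1|\le|z|e^{|\Re z|}$, one obtains $\|w\|_\infty\le 3b_1b_2\theta_m^m$ and
\[
\var_k(w)\le 3b_1e^{b_2}\var_k(u)+3b_1b_2^2e^{b_2}\theta_m^m\theta_k^k,\qquad k\in\N\cup\{0\}.
\]
From the displayed sum for $\psi$ we then have $\|\psi\|_\infty\le N\|w\|_\infty\|\phi\|_\infty\le 3Nb_1b_2\theta_m\|\phi\|$ (using $\theta_m\le1$), which also bounds $\var_0(\psi)\le 2\|\psi\|_\infty$, while the argument of Lemma \ref{lem:302} applied verbatim gives, for $k\in\N$,
\[
\var_k(\psi)\le N\bigl\{\|w\|_\infty\var_{k+1}(\phi)+\|\phi\|_\infty\var_{k+1}(w)\bigr\}.
\]

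To finish I would run a short case analysis on $\theta_{k+1}$, $k\in\N$. If $\theta_{k+1}\ge1$, then $\theta_{k+1}^k\ge1$ and $\var_k(\psi)\le 2\|\psi\|_\infty$ already suffices. If $\theta_{k+1}=0$, then $\mcB\subset L_k$ by Lemma \ref{lem:293}(i), so $\psi\in L_k$ and $\var_k(\psi)=0$. If $0<\theta_{k+1}<1$, then $\theta_j<1$ for all $j\ge k+1$ and $\theta_m\le\theta_{k+1}$ whenever $m\ge k+1$; substituting the weight estimates together with $\var_{k+1}(\phi)\le\|\phi\|\theta_{k+2}^{k+1}$ and $\var_{k+1}(u)\le 2b_2\theta_m\theta_{k+1}^k$ (which follows from $\var_{k+1}(u)=\var_{k+1}(g)\le b_2\theta_{k+1}^{k+1}\le b_2\theta_m\theta_{k+1}^k$ when $k+1\ge m$, and from $\var_{k+1}(u)\le2\|u\|_\infty\le 2b_2\theta_m^m$ with $\theta_m^m\le\theta_m\theta_{k+1}^{m-1}\le\theta_m\theta_{k+1}^k$ when $k+1<m$, since then $m-1\ge k+1$), and using the monotonicity facts $\theta_m^m\le\theta_m$ and $\theta_{k+2}^{k+1}\le\theta_{k+1}^{k+1}\le\theta_{k+1}^k$, every term collapses to a constant multiple of $\theta_m\theta_{k+1}^k\|\phi\|$, the constant depending only on $N,b_1,b_2$. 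Taking $C_3$ to be the maximum of the finitely many constants produced this way completes the proof.

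\emph{Main obstacle.} The real work is the bookkeeping in the last step: one must extract a single uniform factor $\theta_m$ — furnished by $\var_m(g)\le b_2\theta_m^m\le b_2\theta_m$ in the small-$k$ range and by $\theta_{k+1}\le\theta_m$ in the large-$k$ range — while keeping the weight $\theta_{k+1}^k$ that the norm of $\mcB$ demands, and one must peel off the two degenerate regimes $\theta_{k+1}\ge1$ and $\theta_{k+1}=0$, in which the naive monotonicity inequalities valid for $0<\theta_{k+1}<1$ break down and one instead argues through $\|\psi\|_\infty$ and through the collapse $\mcB\subset L_k$ respectively.
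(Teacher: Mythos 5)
Your proposal is correct and follows essentially the same route as the paper: both establish that $g_m$ satisfies (\ref{condi:029}) via Lemma \ref{lem:681}(i)--(ii) so that Lemma \ref{lem:072}(i) applies, both bound $\|e^{g}-e^{g_m}\|_{\infty}\le 3b_1b_2\theta_m^m$, and both estimate $\var_k$ of the difference by the same two-term (Lasota--Yorke type) split with the same dichotomy $k+1\ge m$ versus $k+1<m$, exploiting that $E_mg\in L_m$ is constant on $m$-cylinders. The only differences are bookkeeping: you bound the variation of the weight via the factorization $e^{g}-e^{g_m}=e^{g_m}(e^{g-g_m}-1)$ and treat the degenerate regimes $\theta_{k+1}\ge1$ and $\theta_{k+1}=0$ separately, whereas the paper estimates the second difference of the weights directly and absorbs those regimes into the constant $C=\sup_m\theta_m$.
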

\begin{proof}
From (i) and (ii) in Lemma \ref{lem:681}, $g_m$ also satisfies (\ref{condi:029}). Thus, $\mathscr{L}_{g_m}(\mcB)\subset\mcB$ follows form Lemma \ref{lem:072} (i).

We show that the inequality holds for $C_3=3Nb_1b_2(C+3)$. 
Take $\phi\in\mcB$ so that $\|\phi\|\le1$. 
It is enough to show the following two inequalities:
\begin{align}
&\|(\mathscr{L}_g-\mathscr{L}_{g_m})\phi\|_{\infty}\le3Nb_1b_2\theta_m,
\label{ineq:112}\\
&\var_{k}((\mathscr{L}_g-\mathscr{L}_{g_m})\phi)\le3Nb_1b_2(C+2)\theta_m\theta_{k+1}^k,\quad k\in\N\cup\{0\}.
\label{ineq:610}
\end{align}

To prove (\ref{ineq:112}) and (\ref{ineq:610}), we first show that
\begin{equation}
\|e^{g}-e^{g_m}\|_{\infty}\le3b_1b_2\theta_m^m.
\label{ineq:254}
\end{equation}
From (\ref{ineq:256}) and Lemma \ref{lem:681} (i), $\|e^{g}-e^{g_m}\|_{\infty}\le3b_1\|g-g_m\|_{\infty}$. 
If $\omega,\omega^{\prime}\in\Sigma_{{\bf A}}^+$ satisfy $\omega|m=\omega^{\prime}|m$, then $|g(\omega)-g(\omega^{\prime})|\le b_2\theta_m^m$, and hence, (\ref{ineq:254}) follows from the same argument as that in the proof of Lemma \ref{lem:681} (iii).

We prove (\ref{ineq:112}). 
Let $\omega\in\Sigma_{{\bf A}}^+$. Then, by (\ref{ineq:254}), we have
\[
|((\mathscr{L}_g-\mathscr{L}_{g_m})\phi)(\omega)|\le\sum_{i:{\bf A}(i\omega_0)=1}|e^{g(i\omega)}-e^{g_m(i\omega)}||\phi(i\omega)|\le3Nb_1b_2\theta_m^m.
\]
Moreover, $\theta_m^m\le\theta_m$ since $\theta_m\le1$.
Thus, (\ref{ineq:112}) holds.

We prove (\ref{ineq:610}). 
From (\ref{ineq:112}), $\var_0((\mathscr{L}_g-\mathscr{L}_{g_m})\phi)\le2\|(\mathscr{L}_g-\mathscr{L}_{g_m})\phi\|_{\infty}\le6Nb_1b_2\theta_m$. 
Let $k\in\N$ and let $\omega,\omega^{\prime}\in\Sigma_{{\bf A}}^+$ satisfy $\omega|k=\omega^{\prime}|k$. 
Since $\omega_0=\omega^{\prime}_0$, we have $\{i:{\bf A}(i\omega_0)=1\}=\{i:{\bf A}(i\omega_0^{\prime})=1\}$. 
For $i$ with ${\bf A}(i\omega_0)=1$, we set $I_1(i)=|e^{g(i\omega^{\prime})}-e^{g_m(i\omega^{\prime})}|\ |\phi(i\omega)-\phi(i\omega^{\prime})|$ and $I_2(i)=|e^{g(i\omega)}-e^{g_m(i\omega)}-(e^{g(i\omega^{\prime})}-e^{g_m(i\omega^{\prime})})|\ |\phi(i\omega)|$. 
Then,
\[
|((\mathscr{L}_g-\mathscr{L}_{g_m})\phi)(\omega)-((\mathscr{L}_g-\mathscr{L}_{g_m})\phi)(\omega^{\prime})|\le\sum_{i:{\bf A}(i\omega_0)=1}I_1(i)+I_2(i).
\]
By (\ref{ineq:254}), we have
\[
I_1(i)\le3b_1b_2\theta_m^m\theta_{k+2}^{k+1}\le 3Cb_1b_2\theta_m^m\theta_{k+1}^k.
\]
On the  other hand, if $k+1\ge m$, then, by (\ref{ineq:256}), $I_2(i)=|e^{g(i\omega)}-e^{g(i\omega^{\prime})}|\le3e^{\max\Re g}|g(i\omega)-g(i\omega^{\prime})|\le3b_1b_2\theta_{k+1}^{k+1}\le3b_1b_2\theta_m\theta_{k+1}^{k}$,
and if $k+1<m$, then, by (\ref{ineq:254}), $I_2(i)\le2\|e^{g}-e^{g_m}\|_{\infty}\le6b_1b_2\theta_m^m\le6b_1b_2\theta_m\theta_{k+1}^k$.
Thus,
\[
I_2(i)\le6b_1b_2\theta_m\theta_{k+1}^k,
\]
and hence, $I_1(i)+I_2(i)\le3b_1b_2(C+2)\theta_m\theta_{k+1}^k$.
Thus, (\ref{ineq:610}) holds.
\end{proof}

\section{Proof of Theorem \ref{thm:919}}
\label{sec:main01}

In this section, we prove Theorem \ref{thm:919}, which is the first main result of this paper.

\begin{proof}[Proof of Theorem \ref{thm:919}]
(i)\ $\mathscr{L}_f(\mcB)\subset\mcB$ follows from Lemma \ref{lem:072} (i). 
We show the compactness of $\mathscr{L}_f:\mcB\to\mcB$. 
By (\ref{ineq:094}), we have $\lim_{m\to\infty}\|\mathscr{L}_f-K_{f,m}\|_{\mcB\to\mcB}=0$. 
Since $K_{f,m}$ is a finite-rank operator, $\mathscr{L}_f$ is a compact operator.

(ii)\ Without loss of generality, we may assume that $\theta_m>0$ for any $m\in\N$. 
Then, from Lemma \ref{lem:293} (ii), 
\begin{equation}
\label{eq:392}
\bigcup_{m\ge0}L_m\subset\mcB.
\end{equation}

It is enough to show the following claim:

\begin{claim}
\label{cla:404}
Let $\rho>0$. 
Then, $\Lambda_f\cap\{|\lambda|>\rho\}=\sigma(\mathscr{L}_f:\mcB\to\mcB)\cap\{|\lambda|>\rho\}$. 
Moreover, for $\lambda\in\Lambda_f$ with $|\lambda|>\rho$, the corresponding multiplicities coincide with each other. 
\end{claim}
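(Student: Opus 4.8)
The plan is to match the two spectra through their generalised eigenspaces, which are finite‑dimensional in both settings. One inclusion is immediate from $\mcB\subseteq V$ (continuously) and $\mathscr{L}_f(\mcB)\subseteq\mcB$: if $\lambda\in\sigma(\mathscr{L}_f\colon\mcB\to\mcB)$ and $|\lambda|>\rho$, then, $\mathscr{L}_f\colon\mcB\to\mcB$ being compact, $\lambda$ is an eigenvalue whose generalised eigenspace $\mcB_\lambda$ is finite‑dimensional, contained in $V$, $\mathscr{L}_f$‑invariant and equipped with a nilpotent $\mathscr{L}_f-\lambda$; hence $\lambda\in\Lambda_f$ and $\dim V_\lambda\ge\dim\mcB_\lambda$, where $V_\lambda$ is the (finite‑dimensional, by \cite[Theorem 1]{Pollicott Invent 1986}) generalised eigenspace of $\mathscr{L}_f\colon V\to V$ at $\lambda$. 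The content of Claim~\ref{cla:404} is thus the reverse \emph{regularity} statement: $V_\lambda\subseteq\mcB$ whenever $\lambda\in\Lambda_f$ and $|\lambda|>\rho$. Granting it, $V_\lambda$ is a finite‑dimensional $\mathscr{L}_f$‑invariant subspace of $\mcB$ on which $\mathscr{L}_f-\lambda$ is nilpotent, so $\lambda\in\sigma(\mathscr{L}_f\colon\mcB\to\mcB)$ with $\dim\mcB_\lambda\ge\dim V_\lambda$, and comparing the two inclusions yields $\Lambda_f\cap\{|\lambda|>\rho\}=\sigma(\mathscr{L}_f\colon\mcB\to\mcB)\cap\{|\lambda|>\rho\}$ together with equality of all the multiplicities (each being $\dim V_\lambda$).

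To prove the regularity statement, set $R_\ell:=\mathscr{L}_f-K_{f,\ell}=\mathscr{L}_f\circ(I-E_\ell)$ and choose $\theta\in(0,1)$ small, then $\ell\in\N$ large (with $\theta_{\ell+1}\le1$), so that $\|R_\ell\|_{\mcB\to\mcB}\le C_2\theta_{\ell+1}<\rho$ — by \eqref{ineq:094} with $q=1$ — and the spectral radius of $R_\ell$ acting on $F_\theta$ is less than $\rho$. The latter follows by iterating the Lasota--Yorke inequality of Lemma~\ref{lem:302} together with Lemma~\ref{lem:681}(iii): one obtains $\|R_\ell^{\,n}\|_{F_\theta\to F_\theta}\le C_\theta\,\kappa^{\,n}$ with a ratio $\kappa$ of the form $\mathrm{const}\cdot Ne^{\max\Re f}\theta$ plus a quantity tending to $0$ as $\ell\to\infty$ for fixed $\theta$ (the usual mechanism behind quasi‑compactness of $\mathscr{L}_f$ on $F_\theta$), so $\kappa<\rho$ once $\theta$ is small and $\ell$ large. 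Now fix $\lambda\in\Lambda_f$ with $|\lambda|>\rho$, let $\phi\in V_\lambda$, and argue by induction on the least $k$ with $(\mathscr{L}_f-\lambda)^k\phi=0$; the inductive hypothesis is that $\psi:=(\mathscr{L}_f-\lambda)\phi$ already lies in $\mcB$ (and $\psi=0$ when $k=1$). From $\phi=\lambda^{-1}R_\ell\phi+\lambda^{-1}(K_{f,\ell}\phi-\psi)$ and iteration,
\[
\phi=\lambda^{-n}R_\ell^{\,n}\phi+\sum_{j=0}^{n-1}\lambda^{-(j+1)}R_\ell^{\,j}\bigl(K_{f,\ell}\phi-\psi\bigr),\qquad n\in\N .
\]
Since $E_\ell\phi\in L_\ell\subseteq\mcB$ by \eqref{eq:392}, $\mathscr{L}_f(\mcB)\subseteq\mcB$, and $\psi\in\mcB$, the element $K_{f,\ell}\phi-\psi$ lies in $\mcB$; as $\|R_\ell\|_{\mcB\to\mcB}<\rho<|\lambda|$, the series $\Phi:=\sum_{j\ge0}\lambda^{-(j+1)}R_\ell^{\,j}(K_{f,\ell}\phi-\psi)$ converges in $\mcB$. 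On the other hand $\|\lambda^{-n}R_\ell^{\,n}\phi\|_\theta\le C_\theta\|\phi\|_\theta(\kappa/|\lambda|)^n\to0$ because $\kappa<|\lambda|$. Letting $n\to\infty$ in $F_\theta$ — into which $\mcB$ and $V$ both embed continuously — gives $\phi=\Phi\in\mcB$, which completes the induction, hence the claim.

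The main obstacle is making the spectral radius of $R_\ell$ on $F_\theta$ smaller than $\rho$: the crude estimate $\|\mathscr{L}_f\phi\|_\infty\le Ne^{\max\Re f}\|\phi\|_\infty$ only controls the remainder $\lambda^{-n}R_\ell^{\,n}\phi$ when $|\lambda|$ is large, so one genuinely has to pass to the Lipschitz norm $\|\cdot\|_\theta$ with $\theta$ small to gain the extra contraction, while $\ell$ must be chosen to make $\|R_\ell\|_{\mcB\to\mcB}$ small \emph{simultaneously}, so that the ``regular part'' $\Phi$ of $\phi$ is trapped inside $\mcB$. Everything else — the continuous embeddings $\mcB\hookrightarrow V\hookrightarrow F_\theta$, the inclusions $L_\ell\subseteq\mcB$ and $\mathscr{L}_f(\mcB)\subseteq\mcB$, and finiteness of $\dim V_\lambda$ — is routine given Sections~\ref{sec:approximation}--\ref{sec:estimates} and \cite[Theorem 1]{Pollicott Invent 1986}.
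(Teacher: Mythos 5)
Your argument is correct, but it takes a genuinely different route from the paper. The paper proves Claim \ref{cla:404} by passing to the completions $V_{\theta}$ of $V$ under $\|\cdot\|_{\theta}$, quoting Pollicott's computation $r_{\mathrm{ess}}(\mathscr{L}_f:F_{\theta}\to F_{\theta})=\theta e^{P(\Re f)}$, checking that $\mcB$ is dense in $V_{\theta}$ (via Lemma \ref{lem:681}(iv) and (\ref{eq:392})), and then invoking the abstract comparison result \cite[Lemma A.1]{Baladi Contemp.Math 2008}; you instead prove the essential content of that abstract lemma directly in this concrete setting, by a bootstrap showing that every generalised eigenfunction of $\mathscr{L}_f:V\to V$ with $|\lambda|>\rho$ already lies in $\mcB$. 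Your key ingredients are the decomposition $\mathscr{L}_f=K_{f,\ell}+R_{\ell}$ with $\|R_{\ell}\|_{\mcB\to\mcB}\le C_2\theta_{\ell+1}<\rho$ from (\ref{ineq:094}), the fact $K_{f,\ell}(\mcB\cup V_\lambda)\subset\mcB$ (which uses $L_\ell\subset\mcB$, i.e.\ (\ref{eq:392}) — note this is legitimate exactly because the claim is stated after the paper's reduction ``WLOG $\theta_m>0$ for all $m$'', so you are implicitly working in that case, as does the paper), and the Lasota--Yorke-type estimate $\|R_{\ell}^{\,n}\|_{F_{\theta}\to F_{\theta}}\le C_{\theta}\kappa^{n}$ with $\kappa\approx 2Ne^{\max\Re f}\theta+O_{\theta}(\theta^{\ell})$, which indeed follows from Lemma \ref{lem:302} together with Lemma \ref{lem:681}(ii)--(iii) and suffices since $\theta$ may be taken arbitrarily small; the telescoped identity and the continuous embeddings $\mcB,V\hookrightarrow F_{\theta}$ then give $\phi\in\mcB$, and the two-way comparison of generalised eigenspaces yields equality of spectra and multiplicities. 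What each approach buys: the paper's proof is shorter given the external references (and gets the sharp threshold $\theta e^{P(\Re f)}$ from Pollicott), while yours is self-contained, needs neither the density of $\mcB$ in $V_{\theta}$ nor the pressure bound, and makes the mechanism behind the quoted abstract lemma explicit; the only part you leave as a sketch, the $F_{\theta}$-bound on $R_{\ell}^{\,n}$, does check out with the constants you indicate.
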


For $\theta\in(0,1)$, we denote by $V_{\theta}$ the completion of $V$ by the norm $\|\cdot\|_{\theta}$. 
Clearly, $V_{\theta}\subset F_{\theta}$. Hence, from Lemma \ref{lem:405}, $V=\bigcap_{\theta\in(0,1)}V_{\theta}$. 
Moreover, $\mathscr{L}_f(V_{\theta})\subset V_{\theta}$ since $\mathscr{L}_f(V)\subset V$.
Therefore, it is enough to show the following claim:
\begin{claim}
\label{cla:435}
Let $\rho>0$. 
For $\theta\in(0,1)$ with $\theta e^{P(\Re f)}<\rho$ and $\lambda\in\sigma(\mathscr{L}_f:V_{\theta}\to V_{\theta})$ with $|\lambda|>\rho$, $\lambda$ is an eigenvalue of $\mathscr{L}_f:\mcB\to\mcB$, and hence, $\lambda$ is that of $\mathscr{L}_f:V_{\theta}\to V_{\theta}$. 
Moreover, the corresponding multiplicities coincide with each other.
\end{claim}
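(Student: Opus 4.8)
The plan is to reduce the statement to the assertion that the entire generalized eigenspace of $\mathscr{L}_f:V_\theta\to V_\theta$ attached to $\lambda$ already sits inside $\mcB$, and then to establish this by bootstrapping regularity from a Lasota--Yorke estimate for iterates. First observe that $\mcB$ embeds continuously into $V_\theta$: for $\phi\in\mcB$ one has $\var_k(\phi)\le\|\phi\|_{\mcB(\{\theta_m\})}\theta_{k+1}^k$, hence $\var_k(\phi)/\theta^k\le\|\phi\|_{\mcB(\{\theta_m\})}(\theta_{k+1}/\theta)^k$, and $\sup_{k\ge0}(\theta_{k+1}/\theta)^k<\infty$ since $\theta_m\to0$, so $\|\phi\|_\theta\le c_\theta\|\phi\|_{\mcB(\{\theta_m\})}$. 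Next, by the classical quasi-compactness of transfer operators on $F_\theta$ (Ruelle--Perron--Frobenius together with the Ionescu-Tulcea--Marinescu/Hennion criterion, applied to $\mathscr{L}_f$ on $V_\theta$; see, e.g., \cite{Parry-Pollicott}), the essential spectral radius of $\mathscr{L}_f:V_\theta\to V_\theta$ is at most $\theta e^{P(\Re f)}$, which is $<\rho<|\lambda|$ by hypothesis. Hence $\lambda$ is an isolated eigenvalue of $\mathscr{L}_f:V_\theta\to V_\theta$ whose Riesz projection has finite rank, so the generalized eigenspace $N_\lambda=\{\psi\in V_\theta:(\mathscr{L}_f-\lambda)^{d}\psi=0\}$ (for suitable $d\in\N$) is finite-dimensional and $\dim N_\lambda$ is the multiplicity of $\lambda$ on $V_\theta$. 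Once $N_\lambda\subset\mcB$ is known, any nonzero $\psi\in N_\lambda$ is an eigenfunction lying in $\mcB$, so $\lambda$ is an eigenvalue of $\mathscr{L}_f:\mcB\to\mcB$ and hence also of $\mathscr{L}_f:V_\theta\to V_\theta$; and since $\mcB\subset V_\theta$ the generalized eigenspace of $\mathscr{L}_f:\mcB\to\mcB$ at $\lambda$ equals $\{\psi\in\mcB:(\mathscr{L}_f-\lambda)^{d}\psi=0\}=N_\lambda$, so the two multiplicities coincide. (As in the proof of Theorem \ref{thm:919}(ii) we may assume $\theta_m>0$ for all $m$.)

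The core estimate is a Lasota--Yorke bound for $\mathscr{L}_f^q$. Writing $G_q=\|\mathscr{L}_{\Re f}^q1\|_\infty$ and $R_k=\sum_{l>k}\var_l(f)$ (finite because $f\in V$), the computation underlying Lemma \ref{lem:302}, now applied to $\mathscr{L}_f^q\psi(\omega)=\sum_{|w|=q,\ w\omega\text{ admissible}}e^{S_qf(w\omega)}\psi(w\omega)$ and keeping track of the sharp weight $\sum_{w}e^{S_q\Re f(w\omega)}=(\mathscr{L}_{\Re f}^q1)(\omega)\le G_q$ together with $|S_qf(w\omega)-S_qf(w\omega')|\le R_k$ whenever $\omega|k=\omega'|k$, gives
\begin{equation*}
\var_k(\mathscr{L}_f^q\psi)\le G_q\bigl(\var_{q+k}(\psi)+3e^{R_0}R_k\|\psi\|_\infty\bigr),\qquad k\in\N\cup\{0\},\ q\in\N,
\end{equation*}
for every continuous $\psi$. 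Here the sharp growth bound $G_q\le Me^{qP(\Re f)}$ (a standard consequence of the Ruelle--Perron--Frobenius theorem) is what makes the hypothesis on $\theta$ usable: since $|\lambda|>\rho>\theta e^{P(\Re f)}$, one may fix $q$ so large that $a\theta^q<1$, where $a:=G_q/|\lambda|^q\le M(e^{P(\Re f)}/|\lambda|)^q$; a cruder bound such as $G_q\le N^qe^{q\max\Re f}$ would instead require $Ne^{\max\Re f}\theta<\rho$, which does not follow from $\theta e^{P(\Re f)}<\rho$.

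For a genuine eigenfunction $0\ne\psi\in N_\lambda$ one has $\psi=\lambda^{-q}\mathscr{L}_f^q\psi$, so the displayed inequality reads $\var_k(\psi)\le a\bigl(\var_{q+k}(\psi)+3e^{R_0}R_k\|\psi\|_\infty\bigr)$. Iterating this $n$ times, the tail $a^n\var_{nq+k}(\psi)\le\|\psi\|_\theta\theta^k(a\theta^q)^n$ (using $\psi\in V_\theta\subset F_\theta$) tends to $0$ as $n\to\infty$, leaving $\var_k(\psi)\le3e^{R_0}\|\psi\|_\infty\sum_{j\ge1}a^jR_{(j-1)q+k}$; the series converges since $R_l$ decays faster than any geometric sequence. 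Using $\var_l(f)\le\theta_l^l$ and the monotonicity $\theta_1\ge\theta_2\ge\cdots$, one bounds $R_k\le C_0\theta_{k+1}^k$ and $R_{(j-1)q+k}\le C_0\,\theta_{k+1}^k\,\theta_{(j-1)q}^{(j-1)q}$ for $j\ge2$ with $C_0$ independent of $j,k$, whence $\sum_{j\ge1}a^jR_{(j-1)q+k}\le C\,\theta_{k+1}^k$ for all $k$ with $C$ independent of $k$ (the finitely many small $k$ being handled trivially by $\var_k(\psi)\le2\|\psi\|_\infty$), so $\psi\in\mcB$. For a general $\psi\in N_\lambda$ proceed by induction on the least $j\ge1$ with $(\mathscr{L}_f-\lambda)^j\psi=0$: set $\phi:=(\mathscr{L}_f-\lambda)\psi$, which lies in $\mcB$ by the inductive hypothesis; expanding $\mathscr{L}_f^q=(\lambda+(\mathscr{L}_f-\lambda))^q$ yields $\lambda^q\psi=\mathscr{L}_f^q\psi-\sum_{i=1}^{j-1}\binom{q}{i}\lambda^{q-i}(\mathscr{L}_f-\lambda)^{i-1}\phi$, and since $\mathscr{L}_f(\mcB)\subset\mcB$ each $(\mathscr{L}_f-\lambda)^{i-1}\phi$ lies in $\mcB$, so the same iteration, now carrying an extra source term bounded by a constant times $\theta_{k+1}^k$, again places $\psi$ in $\mcB$. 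This would give $N_\lambda\subset\mcB$ and complete the argument.

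The main obstacle is the bootstrap just sketched. Two points demand care: first, one must run the iteration with the sharp growth constant $G_q\le Me^{qP(\Re f)}$, since otherwise the geometric recursion does not close under the hypothesis $\theta e^{P(\Re f)}<\rho$; second, after passing to the limit one must control the infinite series $\sum_j a^jR_{(j-1)q+k}$ by $\theta_{k+1}^k$ uniformly in $k$, which relies on the monotonicity of $\{\theta_m\}$ and on $\var_m(f)$ decaying faster than any geometric rate. Everything else — the continuous embedding $\mcB\hookrightarrow V_\theta$, the quasi-compactness of $\mathscr{L}_f$ on $V_\theta$, and the multiplicity bookkeeping via the nesting $\mcB\subset V\subset V_\theta$ — is routine.
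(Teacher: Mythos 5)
Your argument is correct in substance, but it takes a genuinely different route from the one in the paper. The paper disposes of the claim abstractly: it notes $r_{\mathrm{ess}}(\mathscr{L}_f:V_{\theta}\to V_{\theta})\le r_{\mathrm{ess}}(\mathscr{L}_f:F_{\theta}\to F_{\theta})=\theta e^{P(\Re f)}<\rho$ by \cite[Theorem 1]{Pollicott Invent 1986}, observes that $\bigcup_{m\ge0}L_m\subset\mcB$ is dense in $V_{\theta}$ by Lemma \ref{lem:681} (iv), and then invokes the spectral comparison lemma \cite[Lemma A.1]{Baladi Contemp.Math 2008}, which transfers eigenvalues and multiplicities outside the essential radius between a Banach space and a densely embedded invariant subspace on which the operator is compact (Theorem \ref{thm:919} (i)). You instead prove by hand the statement that the abstract lemma encapsulates: every generalized eigenfunction of $\mathscr{L}_f:V_{\theta}\to V_{\theta}$ at an eigenvalue of modulus $>\rho$ already lies in $\mcB$, via the iterated Lasota--Yorke inequality for $\mathscr{L}_f^q$ with the sharp weight $\|\mathscr{L}_{\Re f}^q1\|_{\infty}\le Me^{qP(\Re f)}$, the eigen-equation $\psi=\lambda^{-q}\mathscr{L}_f^q\psi$, and a resummation of the source terms $R_{(j-1)q+k}$ against $\theta_{k+1}^k$ using the monotonicity of $\{\theta_m\}$; the binomial trick handles Jordan blocks. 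This is self-contained (no appeal to \cite{Baladi Contemp.Math 2008}, only to quasi-compactness on $F_\theta$ and to the RPF growth bound), and your observation that the crude bound $N^qe^{q\max\Re f}$ would not close the recursion, whereas $Me^{qP(\Re f)}$ does under $\theta e^{P(\Re f)}<\rho$, is exactly the right point; the price is a substantially longer argument. Two small repairs: your displayed inequality should be asserted only for $k\ge1$ (for $k=0$ the preimage alphabets of $\omega,\omega'$ need not coincide, and e.g.\ for constant $f$ the stated bound fails at $k=0$), which is harmless since, after the reduction to $\theta_m>0$ for all $m$, the finitely many small $k$ are covered by $\var_k(\psi)\le2\|\psi\|_{\infty}\le C\theta_{k+1}^k$; and for the multiplicity count on the $\mcB$ side you should say explicitly that you use the compactness of $\mathscr{L}_f:\mcB\to\mcB$ from Theorem \ref{thm:919} (i) (already proved at this point), so that the algebraic multiplicity there equals the dimension of the generalized eigenspace, which by your inclusion coincides with $N_\lambda$.
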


From \cite[Theorem 1]{Pollicott Invent 1986}, $r_{\mathrm{ess}}(\mathscr{L}_f:V_{\theta}\to V_{\theta})\le r_{\mathrm{ess}}(\mathscr{L}_f:F_{\theta}\to
F_{\theta})=\theta e^{P(\Re f)}<\rho$. 
Here, for a Banach space $E$ and a bounded linear operator $T$ on $E$, $r_{\mathrm{ess}}(T:E\to E)$ denotes the essential spectral radius of $T$, that is,
\begin{align*}
r_{\mathrm{ess}}(T:E\to E)=\inf\{r\ge0:\>& \text{any $\lambda\in\sigma(T:E\to E)$ with $|\lambda|>r$ is an}\\
                                            & \text{isolated eigenvalue with finite multiplicity}\}.
\end{align*}
From Lemma \ref{lem:681} (iv), $\bigcup_{m\ge0}L_m$ is dense in $V_{\theta}$. 
Therefore, from (\ref{eq:392}), $\mcB$ is dense in $V_{\theta}$. 
Thus, Claim \ref{cla:435} follows from \cite[Lemma A.1]{Baladi Contemp.Math 2008}.
\end{proof}

\section{Proof of Theorem \ref{thm:925}}
\label{sec:main02}

In this section, we prove Theorem \ref{thm:925}, which is the second main result of this paper.

Let $f\in V$ and let $r\in(0,1)$ satisfy (\ref{cond:209}). 
Take $D>0$ so that 
\[
\var_m(f)^{1/m}\le Dr^m,\quad m\in\N.
\]
In this section, we set
\begin{equation}
\label{eq:204}
\theta_m=Dr^m,\quad m\in\N.
\end{equation}

When $\theta_m$ is of the form (\ref{eq:204}), we can obtain a slightly better estimate than (\ref{ineq:094}). 

\begin{lemma}
\label{lem:807}
There exists $C_4>0$, depending only on $b_1$ and $b_2$, such that the following inequality holds for $m,q\in\N$ with $m\ge2,Dr^{m+1}\le1$ and $g\in V$ satisfying (\ref{condi:029}):
\[
\|\mathscr{L}_g^q-K^{(q)}_{g,m}\|_{\mcB\to\mcB}\le C_4^q(r^{2m})^q.
\]
\end{lemma}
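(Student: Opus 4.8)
The plan is to imitate the proof of \eqref{ineq:094} in Lemma \ref{lem:216}, keeping careful track of the extra power of $r$ that is available when $\theta_m=Dr^m$. Since $\mathscr{L}_g^q-K^{(q)}_{g,m}=(\mathscr{L}_g-K_{g,m})^q$ and $\|\cdot\|_{\mcB\to\mcB}$ is submultiplicative, it suffices to prove the case $q=1$, i.e.\ to show $\|\mathscr{L}_g(I-E_m)\|_{\mcB\to\mcB}\le C_4 r^{2m}$ for a constant $C_4$ depending only on $b_1,b_2$. Fix $\phi\in\mcB$ with $\|\phi\|\le1$; the two things to bound are $\|\mathscr{L}_g(\phi-E_m\phi)\|_\infty$ and $\var_k(\mathscr{L}_g(\phi-E_m\phi))$ for every $k\in\N\cup\{0\}$, each by a constant multiple of $r^{2m}\theta_{k+1}^k$ (equivalently, of $r^{2m}(Dr^{k+1})^k$).

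First I would handle the sup norm: by Lemma \ref{lem:046} (i), $\|\phi-E_m\phi\|_\infty\le\theta_{m+1}^m=(Dr^{m+1})^m$, and since $m\ge2$ this is $\le (Dr^{m+1})^2\le D^2 r^{m+1}r^{m+1}$; using $Dr^{m+1}\le1$ once more gives $\|\mathscr{L}_g(\phi-E_m\phi)\|_\infty\le Nb_1(Dr^{m+1})^m\le Nb_1 r^{2m}$ (because $(Dr^{m+1})^m\le (Dr^{m+1})^2\le r^{2m}$ when $Dr^{m+1}\le1$). For the variations, apply Lemma \ref{lem:302} with $f\rightsquigarrow g$, $\phi\rightsquigarrow\phi-E_m\phi$: for $k\ge1$,
\[
\var_k(\mathscr{L}_g(\phi-E_m\phi))\le Nb_1\bigl\{3\var_{k+1}(g)\|\phi-E_m\phi\|_\infty+\var_{k+1}(\phi-E_m\phi)\bigr\}.
\]
The first term is $\le 3Nb_1 b_2(Dr^{k+1})^{k+1}(Dr^{m+1})^m$, which, after peeling off one factor $Dr^{k+1}\le C$ and one factor $Dr^{m+1}\le r^m\cdot(Dr)\le \text{const}\cdot r^m$ from each and using $(Dr^{m+1})^{m-1}\le r^{m}$ (valid since $m\ge2$, $Dr^{m+1}\le1$), is bounded by a constant times $r^{2m}(Dr^{k+1})^k$. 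The $k=0$ case follows from the sup-norm bound since $\var_0\le2\|\cdot\|_\infty$.

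The only genuinely new point compared to Lemma \ref{lem:216} is estimating $\var_{k+1}(\phi-E_m\phi)$ by a constant times $r^{2m}(Dr^{k+1})^k$; this is where the main bookkeeping lies, and I expect it to be the main obstacle — not because it is deep, but because one must split on whether $k+1\ge m$ or $k+1<m$ and in each case extract exactly two powers of $r^m$. When $k+1\ge m$, Lemma \ref{lem:046} (ii) gives $\var_{k+1}(\phi-E_m\phi)\le\theta_{k+2}^{k+1}=(Dr^{k+2})^{k+1}$; since $k+1\ge m\ge2$ one has $(Dr^{k+2})^{k+1}\le(Dr^{k+2})^{k}\cdot Dr^{k+2}\le C(Dr^{k+1})^k r^{k+2}\le C r^{2m}(Dr^{k+1})^k$ because $r^{k+2}\le r^{m}\cdot r^{m}$ when $k\ge m-1$ and $\ldots$ (one adjusts the exponents, using $k+2\ge 2m$ when $k+1\ge m$ only if $m$ is large, so more carefully: $r^{k+2}\le r^{2(m-1)+2}=r^{2m}$ using $k\ge m-1$). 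When $k+1<m$, Lemma \ref{lem:046} (ii) gives $\var_{k+1}(\phi-E_m\phi)\le2\theta_{m+1}^m=2(Dr^{m+1})^m$, and $(Dr^{m+1})^m=(Dr^{m+1})^{k}(Dr^{m+1})^{m-k}\le C^k\cdot(Dr^{m+1})^{m-k}$; here $m-k\ge2$ so $(Dr^{m+1})^{m-k}\le(Dr^{m+1})^2\le r^{2m}$, and writing $C^k\le (D r^{k+1})^k\cdot(C/(Dr^{k+1}))^k$ — which is fine because $Dr^{k+1}\ge Dr^{m}\ge$ something bounded below... this requires $Dr^{k+1}$ bounded below, which need not hold, so instead one keeps $\theta_{k+1}^k=(Dr^{k+1})^k$ on the right and absorbs the mismatch into $C_4$, exactly as in the final lines of Lemma \ref{lem:216}. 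Collecting the sup-norm and variation bounds, $\|\mathscr{L}_g(I-E_m)\|_{\mcB\to\mcB}\le C_4 r^{2m}$ with $C_4$ depending only on $b_1,b_2$ (and on $D$, $r$, $N$, which are fixed throughout the section), and the general $q$ follows by submultiplicativity, giving $\|\mathscr{L}_g^q-K^{(q)}_{g,m}\|_{\mcB\to\mcB}\le C_4^q(r^{2m})^q$.
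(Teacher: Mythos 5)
Your overall strategy is the same as the paper's: reduce to $q=1$ via $\mathscr{L}_g^q-K^{(q)}_{g,m}=(\mathscr{L}_g-K_{g,m})^q$, bound $\|\mathscr{L}_g(\phi-E_m\phi)\|_\infty$ and $\var_k(\mathscr{L}_g(\phi-E_m\phi))$ through Lemmas \ref{lem:302} and \ref{lem:046}, and split the key estimate of $\var_{k+1}(\phi-E_m\phi)$ according to $k+1\ge m$ or $k+1<m$, using $\theta_m=Dr^m$ to extract $r^{2m}$. The small constant slips (e.g.\ $(Dr^{m+1})^2\le r^{2m}$ would need $Dr\le1$; in general one only gets $(Dr^{m+1})^2=D^2r^2\,r^{2m}$) are harmless, since $D,r,N$ are fixed data and may enter $C_4$, just as they enter the constants in the paper.

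However, the case $k+1\ge m$ as you wrote it fails. You bound $(Dr^{k+2})^{k+1}\le(Dr^{k+2})^k\cdot Dr^{k+2}\le D(Dr^{k+1})^k\,r^{k+2}$, thereby discarding a factor $r^k$, and you are left needing $r^{k+2}\le r^{2m}$, i.e.\ $k+2\ge 2m$. From $k+1\ge m$ you only get $k+2\ge m+1$, and your ``more careful'' claim $r^{k+2}\le r^{2(m-1)+2}=r^{2m}$ from $k\ge m-1$ is simply false when, say, $k+1=m$ and $m\ge2$ (it would require $m+1\ge 2m$). The repair is to keep both powers of $r^{k+1}$, which is exactly what the paper does: $\theta_{k+2}^{k+1}=(r\,\theta_{k+1})^{k+1}=D\,r^{2(k+1)}\,\theta_{k+1}^{k}\le D\,r^{2m}\,\theta_{k+1}^{k}$, since $2(k+1)\ge 2m$. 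Relatedly, in the case $k+1<m$ your conclusion is muddled: once you have bounded $(Dr^{m+1})^k$ by $C^k$ you cannot ``absorb the mismatch into $C_4$'', because $(C/(Dr^{k+1}))^k$ is $k$-dependent and unbounded; the correct move (and the one actually used in the final lines of Lemma \ref{lem:216} and in the paper's proof here) is never to pass to $C^k$ but to use monotonicity $\theta_{m+1}\le\theta_{k+1}$ directly: $\theta_{m+1}^m\le\theta_{m+1}^{m-k}\theta_{k+1}^k\le\theta_{m+1}^2\theta_{k+1}^k=D^2r^{2m+2}\theta_{k+1}^k$, using $m-k\ge2$ and $\theta_{m+1}\le1$. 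With these two repairs your argument coincides with the paper's proof.
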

\begin{proof}
The outline of the proof is the same as that of the proof of (\ref{ineq:094}). 
We may prove the inequality only for $q=1$. 
Notice that $C$ in (\ref{eq:224}) is equal to $Dr$.

Take $\phi\in\mcB$ so that $\|\phi\|\le1$. 
From Lemma \ref{lem:046} (i), $\|\mathscr{L}_g(\phi-E_m\phi)\|_{\infty}\le Nb_1\theta_{m+1}^m$.
Since $m\ge2$ and $\theta_{m+1}=Dr^{m+1}\le1$, we have $\theta_{m+1}^m\le\theta_{m+1}^2=D^2r^{2m+2}$, and hence, 
\begin{equation}
\label{ineq:252}
\|\mathscr{L}_g(\phi-E_m\phi)\|_{\infty}\le D^2Nb_1r^{2m+2}.
\end{equation}
Thus, we may show that
\[
\var_{k}(\mathscr{L}_g(\phi-E_m\phi))\le DNb_1(3D^2r^3b_2+\max(1,2Dr^2))r^{2m}\theta_{k+1}^k,\quad k\in\N\cup\{0\}.
\]

From (\ref{ineq:252}), $\var_0(\mathscr{L}_g(\phi-E_m\phi))\le2\|\mathscr{L}_g(\phi-E_m\phi)\|_{\infty}\le2D^2Nb_1r^{2m+2}$. Let $k\in\N$. 
From Lemmas \ref{lem:302} and \ref{lem:046} (i),
\[
\var_{k}(\mathscr{L}_g(\phi-E_m\phi))\le Nb_1\{3D^3r^3b_2r^{2m}\theta_{k+1}^k+\var_{k+1}(\phi-E_m\phi)\}.
\]
Thus, it is enough to show that
\begin{equation}
\label{ineq:733}
\var_{k+1}(\phi-E_m\phi)\le D\max(1,2Dr^2)r^{2m}\theta_{k+1}^k.
\end{equation}
We first assume $k+1\ge m$. 
From Lemma \ref{lem:046} (ii), $\var_{k+1}(\phi-E_m\phi)\le\theta_{k+2}^{k+1}$.
Moreover,
\[
\theta_{k+2}^{k+1}=\left(\frac{\theta_{k+2}}{\theta_{k+1}}\right)^{k+1}\theta_{k+1}\theta_{k+1}^k=Dr^{2(k+1)}\theta_{k+1}^k\le Dr^{2m}\theta_{k+1}^k.
\]
Thus, (\ref{ineq:733}) holds for $k+1\ge m$.
We next assume $k+1<m$. 
Then, from Lemma \ref{lem:046} (ii), $ \var_{k+1}(\phi-E_m\phi)\le2\theta_{m+1}^{m-k}\theta_{m}^k\le2\theta_{m+1}^2\theta_{k+1}^k=2D^2r^{2m+2}\theta_{k+1}^k$.
Thus, (\ref{ineq:733}) holds for $k+1<m$. 
\end{proof}

For $x\in\R$, $\lfloor x\rfloor$ denotes the largest integer less than or equal to $x$. 
Recall, from (\ref{def:785}) in Section \ref{sec:introduction}, the definition of approximation numbers of a bounded linear operator acting on a Banach space.
We estimate the approximation numbers of transfer operators acting on the Banach space $\mcB$.

\begin{lemma}
\label{lem:007}
Let $C_4$ be as in Lemma \ref{lem:807}. 
For $q\in\N$ and $R>e^{h_{\mathrm{top}}(\sigma_{{\bf A}})}$, there exists $M_1\in\N$, depending only on $q$ and $R$, such that the following inequality holds for $m\ge M_1$ and $g\in V$ satisfying (\ref{condi:029}): 
\[
a_{\lfloor R^m\rfloor+1}(\mathscr{L}_g^q)\le C_4^q(r^{2m})^q.
\]
\end{lemma}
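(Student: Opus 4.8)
The plan is to exhibit $\mathscr{L}_g^q$ as a norm-small perturbation of a finite-rank operator whose rank is controlled by the topological entropy of $\sigma_{\mathbf{A}}$, and then to read the bound on the approximation number directly off the definition (\ref{def:785}). The natural approximant is $K_{g,m}^{(q)}$: by Lemma \ref{lem:807} we already have the norm estimate $\|\mathscr{L}_g^q-K_{g,m}^{(q)}\|_{\mcB\to\mcB}\le C_4^q(r^{2m})^q$ as soon as $m\ge2$ and $Dr^{m+1}\le1$, and by Lemma \ref{lem:072} (iii) its rank is at most $q\rank E_m=q\dim L_m$. So, apart from choosing $m$ large enough that the hypotheses of Lemma \ref{lem:807} hold, the only thing to verify is that $q\dim L_m\le\lfloor R^m\rfloor$; once that holds, $K_{g,m}^{(q)}$ is an admissible competitor in the infimum (\ref{def:785}) defining $a_{\lfloor R^m\rfloor+1}(\mathscr{L}_g^q)$ and the conclusion is immediate.

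The step that carries the real content is the rank count. First I would observe that $\rank E_m=\dim L_m$ equals the number $Z_m$ of cylinders $[w]$ of length $m$, i.e.\ the number of $\mathbf{A}$-admissible words of length $m$ (each such word has nonempty cylinder because aperiodicity of $\mathbf{A}$ forbids a zero row, so every state has a successor). Writing $Z_m=\sum_{i,j}(\mathbf{A}^{m-1})_{ij}$ and using that $\mathbf{A}$ is aperiodic, hence primitive, Perron--Frobenius gives $Z_m^{1/m}\to\rho(\mathbf{A})=e^{h_{\mathrm{top}}(\sigma_{\mathbf{A}})}$. Given $R>e^{h_{\mathrm{top}}(\sigma_{\mathbf{A}})}$, I would fix $R'$ with $e^{h_{\mathrm{top}}(\sigma_{\mathbf{A}})}<R'<R$; then $qZ_m\le(R')^m$ for all large $m$, and since $R^m-(R')^m\to\infty$ we also have $(R')^m\le R^m-1<\lfloor R^m\rfloor+1$ for all large $m$. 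Together with the conditions $m\ge2$ and $Dr^{m+1}\le1$, which also hold for all large $m$ since $r<1$, this produces $M_1\in\N$, depending only on $q$ and $R$, such that for $m\ge M_1$ all of $m\ge2$, $Dr^{m+1}\le1$ and $q\dim L_m\le\lfloor R^m\rfloor$ hold simultaneously.

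To finish, for $m\ge M_1$ and $g\in V$ satisfying (\ref{condi:029}), Lemma \ref{lem:072} (applied with $b=b_2$) and Lemma \ref{lem:807} ensure that $K_{g,m}^{(q)}$ and $\mathscr{L}_g^q-K_{g,m}^{(q)}$ are bounded operators on $\mcB$, with
\[
\rank K_{g,m}^{(q)}\le q\dim L_m\le\lfloor R^m\rfloor<\lfloor R^m\rfloor+1,
\]
so that (\ref{def:785}) and Lemma \ref{lem:807} yield
\[
a_{\lfloor R^m\rfloor+1}(\mathscr{L}_g^q)\le\|\mathscr{L}_g^q-K_{g,m}^{(q)}\|_{\mcB\to\mcB}\le C_4^q(r^{2m})^q.
\]

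The main obstacle is essentially the one flagged above: the crude bound $\dim L_m\le N^m$ from the introduction is useless here, because $N$ may well exceed $e^{h_{\mathrm{top}}(\sigma_{\mathbf{A}})}$, and one must instead use that the word-complexity of the primitive subshift of finite type $\Sigma_{\mathbf{A}}^+$ grows precisely at the exponential rate $h_{\mathrm{top}}(\sigma_{\mathbf{A}})$. This is classical, so I anticipate no genuine difficulty, only the bookkeeping required to make $M_1$ depend on $q$ and $R$ alone.
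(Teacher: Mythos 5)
Your proposal is correct and follows essentially the same route as the paper: approximate $\mathscr{L}_g^q$ by $K_{g,m}^{(q)}$, bound its rank via Lemma \ref{lem:072} (iii) together with the fact that $(\rank E_m)^{1/m}\to e^{h_{\mathrm{top}}(\sigma_{{\bf A}})}$, and invoke Lemma \ref{lem:807} plus the definition (\ref{def:785}). The only difference is cosmetic — your interpolating constant $R'$ and explicit Perron--Frobenius justification replace the paper's direct choice of $M_1$ with $q\rank E_m\le R^m<\lfloor R^m\rfloor+1$.
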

\begin{proof}
The equation $\rank E_m=\#\{w:\text{${\bf A}$-admissible and $|w|=m$}\}$ implies that $(\rank E_m)^{1/m}\to e^{h_{\mathrm{top}}(\sigma_{{\bf A}})}$ as $m\to\infty$. 
Thus, there exists $M_1\ge2$ such that $q\rank E_m\le R^m$ and $Dr^{m+1}\le1$ for $m\ge M_1$. 
Let $m\ge M_1$. 
From Lemma \ref{lem:072} (iii), $\rank K_{g,m}^{(q)}\le R^m<\lfloor R^m\rfloor+1$. 
Hence, by Lemma \ref{lem:807}, we have $a_{\lfloor R^m\rfloor+1}(\mathscr{L}_g^q)\le\|\mathscr{L}_g^q-K^{(q)}_{g,m}\|_{\mcB\to\mcB}\le C_4^q(r^{2m})^q$, as desired.
\end{proof}

\begin{corollary}
\label{cor:517}
Let $C_4$ be as in Lemma \ref{lem:807}. 
For $q\in\N$ and $R>e^{h_{\mathrm{top}}(\sigma_{{\bf A}})}$, there exists $N_1\in\N$, depending only on $q$ and $R$, such that the following inequality holds for $n\ge N_1$ and $g\in V$ satisfying (\ref{condi:029}): 
\[
a_{n}(\mathscr{L}_g^q)\le\left(\frac{C_4}{r^2}\right)^q\left(\frac{1}{n-1}\right)^{\frac{2q}{-\log_rR}}.
\]
\end{corollary}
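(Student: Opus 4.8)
The plan is to deduce Corollary \ref{cor:517} from Lemma \ref{lem:007} by a purely arithmetic change of index, tracking carefully how the discretization at values $\lfloor R^m\rfloor+1$ extends to all sufficiently large $n$. First I would fix $q\in\N$ and $R>e^{h_{\mathrm{top}}(\sigma_{{\bf A}})}$, let $M_1$ be the integer provided by Lemma \ref{lem:007}, and recall that approximation numbers are non-increasing in the index, i.e. $a_{n+1}(T)\le a_n(T)$ for every bounded operator $T$ (this is immediate from the definition \eqref{def:785}, since the infimum is taken over a larger set of $A$). Given a large $n$, I would choose $m$ to be the largest integer with $\lfloor R^m\rfloor+1\le n$; then $m\ge M_1$ provided $n\ge N_1:=\lfloor R^{M_1}\rfloor+1$, and by monotonicity $a_n(\mathscr{L}_g^q)\le a_{\lfloor R^m\rfloor+1}(\mathscr{L}_g^q)\le C_4^q(r^{2m})^q$ by Lemma \ref{lem:007}.

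The remaining work is to bound $r^{2m}$ from above in terms of $n$. Since $m$ was chosen maximal, we have $\lfloor R^{m+1}\rfloor+1>n$, hence $R^{m+1}\ge\lfloor R^{m+1}\rfloor+1-1=\lfloor R^{m+1}\rfloor\ge n-1$ (for $n\ge2$), so $R^{m+1}\ge n-1$, i.e. $m+1\ge\log_R(n-1)$, i.e. $m\ge\log_R(n-1)-1$. Because $0<r<1$, raising $r$ to a larger power makes it smaller, so
\[
r^{2m}\le r^{2(\log_R(n-1)-1)}=r^{-2}\cdot r^{2\log_R(n-1)}=r^{-2}\cdot(n-1)^{2\log_R r}.
\]
Now $\log_R r=\dfrac{\log r}{\log R}=-\dfrac{1}{-\log_r R}$ (noting $\log_r R=\dfrac{\log R}{\log r}<0$ since $R>1>r$, so $-\log_r R>0$ and the exponent $2\log_R r=-\dfrac{2}{-\log_r R}$ is negative). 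Substituting, $r^{2m}\le r^{-2}(n-1)^{-\frac{2}{-\log_r R}}$, and therefore
\[
a_n(\mathscr{L}_g^q)\le C_4^q\bigl(r^{2m}\bigr)^q\le C_4^q\bigl(r^{-2}(n-1)^{-\frac{2}{-\log_r R}}\bigr)^q=\Bigl(\frac{C_4}{r^2}\Bigr)^q\Bigl(\frac{1}{n-1}\Bigr)^{\frac{2q}{-\log_r R}},
\]
which is exactly the claimed inequality, with $N_1=\lfloor R^{M_1}\rfloor+1$ (or any larger integer also ensuring $n\ge2$ and $m\ge M_1$).

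The only real subtlety — and the step I would be most careful about — is the manipulation of the logarithm base and the sign of $\log_r R$: since both $r$ and $R$ straddle $1$ from opposite sides, one must keep track of which exponents are positive and which are negative when using monotonicity of $t\mapsto r^t$, and verify that $-\log_r R>0$ so the exponent $\frac{2q}{-\log_r R}$ in the statement is a positive power of $1/(n-1)$, consistent with a decay estimate. Everything else is routine: monotonicity of approximation numbers, the maximality choice of $m$, and elementary estimates on $\lfloor\cdot\rfloor$. No genuinely hard analysis is involved; Lemma \ref{lem:007} already contains the substance, and this corollary merely repackages it in a continuous (in $n$) form suitable for summing $\sum_n a_n(\mathscr{L}_f^q)^p$ later.
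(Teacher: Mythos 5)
Your proof is correct and takes essentially the same route as the paper: both apply Lemma \ref{lem:007} at an $n$-dependent index (the paper sets $m=\lfloor\log_R(n-1)\rfloor$, you take the maximal $m$ with $\lfloor R^m\rfloor+1\le n$, which is equivalent), use monotonicity of approximation numbers, and convert $r^{2m}$ into the stated bound at the cost of the factor $r^{-2q}$ via $m+1\ge\log_R(n-1)$. Your sign bookkeeping for $\log_r R<0$ matches the paper's exponent $2/\log_r R$, so no gap remains.
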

\begin{proof}
Let $M_1$ be as in Lemma \ref{lem:007}. 
Take $N_1\in\N$ so that $\lfloor \log_R(n-1)\rfloor\ge M_1$ for $n\ge N_1$. Let $n\ge N_1$. 
Then, $n\ge\lfloor R^{\lfloor \log_R(n-1)\rfloor}\rfloor+1$ since $n>n-1\ge R^{\lfloor \log_R(n-1)\rfloor}$. 
Put $m=\lfloor \log_R(n-1)\rfloor$. 
From Lemma \ref{lem:007}, $a_n(\mathscr{L}_g^q)\le C_4^q(r^{2m})^q=(C_4/r^2)^q(r^{2(m+1)})^q$.
Since $m+1\ge\log_R(n-1)$, we have $r^{2(m+1)}\le r^{2\log_R(n-1)}=(n-1)^{2/\log_rR}$. 
Thus, the desired result follows. 
\end{proof}

Recall from Section \ref{sec:introduction} the definition of the operator ideal $\mathfrak{L}_p^{(a)}(E)$.
The following corollary plays a key role in the proof of Theorem \ref{thm:925}.

\begin{corollary}
\label{cor:029}
Let $g\in V$ satisfy (\ref{condi:029}). 
For $p>0$ with (\ref{cond:468}) and $m,q\in\N$ with $p\ge q$, the following two assertions hold (we write $g_m=E_mg$):
\begin{itemize}
\item[(i)]$\mathscr{L}_g,\mathscr{L}_{g_m}\in\mathfrak{L}_p^{(a)}(\mcB)$.
\item[(ii)]$\mathscr{L}_g^q,\mathscr{L}_{g_m}^q\in\mathfrak{L}_1^{(a)}(\mcB)$ and $\sum_{n=1}^{\infty}a_n(\mathscr{L}_g^q-\mathscr{L}_{g_m}^q)\to0$ as $m\to\infty$.
\end{itemize}
\end{corollary}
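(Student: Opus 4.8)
The plan is to combine the approximation-number bounds from Corollary \ref{cor:517} with the operator-norm estimate of Lemma \ref{lem:950}, keeping careful track of the fact that all constants appearing in those results depend only on $b_1$ and $b_2$. For part (i), I first observe that $g_m = E_m g$ also satisfies (\ref{condi:029}): this follows from Lemma \ref{lem:681} (i), (ii) exactly as in the proof of Lemma \ref{lem:950}. Hence Corollary \ref{cor:517} applies uniformly to both $g$ and $g_m$ with $q = 1$: there is $N_1$ (depending only on $R$) such that $a_n(\mathscr{L}_g), a_n(\mathscr{L}_{g_m}) \le (C_4/r^2)\,(n-1)^{-2/(-\log_r R)}$ for $n \ge N_1$. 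The hypothesis (\ref{cond:468}) says $r^{2p} e^{h_{\mathrm{top}}(\sigma_{\bf A})} < 1$; since $e^{h_{\mathrm{top}}}<R$ can be arranged by choosing $R$ close enough to $e^{h_{\mathrm{top}}(\sigma_{\bf A})}$, one gets $r^{2p}R < 1$, i.e.\ $p \cdot \bigl(2/(-\log_r R)\bigr) > 1$. Therefore $\sum_n a_n(\mathscr{L}_g)^p$ and $\sum_n a_n(\mathscr{L}_{g_m})^p$ converge (the tail is dominated by a convergent $p$-series in $n-1$), which is precisely $\mathscr{L}_g,\mathscr{L}_{g_m} \in \mathfrak{L}_p^{(a)}(\mcB)$.

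For part (ii), I apply the same reasoning with the exponent $q$ in place of $1$. Since $p \ge q$, condition (\ref{cond:468}) gives $r^{2q} e^{h_{\mathrm{top}}(\sigma_{\bf A})} \le r^{2p} e^{h_{\mathrm{top}}(\sigma_{\bf A})} < 1$, so again choosing $R > e^{h_{\mathrm{top}}(\sigma_{\bf A})}$ sufficiently close we may assume $r^{2q} R < 1$, i.e.\ $2q/(-\log_r R) > 1$. Then Corollary \ref{cor:517} (applied with this $q$) yields, for $n$ large, $a_n(\mathscr{L}_g^q) \le (C_4/r^2)^q (n-1)^{-2q/(-\log_r R)}$ and the same bound for $\mathscr{L}_{g_m}^q$; summing gives $\mathscr{L}_g^q, \mathscr{L}_{g_m}^q \in \mathfrak{L}_1^{(a)}(\mcB)$.

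It remains to prove $\sum_{n=1}^{\infty} a_n(\mathscr{L}_g^q - \mathscr{L}_{g_m}^q) \to 0$ as $m \to \infty$. Here I would split the sum at a threshold $N$ that is chosen first and kept fixed. For the tail $n > N$: using the algebraic identity $\mathscr{L}_g^q - \mathscr{L}_{g_m}^q = \sum_{j=0}^{q-1} \mathscr{L}_g^{q-1-j}(\mathscr{L}_g - \mathscr{L}_{g_m})\mathscr{L}_{g_m}^{j}$ together with the ideal property of $\mathfrak{L}_1^{(a)}(\mcB)$, one bounds $a_n(\mathscr{L}_g^q - \mathscr{L}_{g_m}^q)$ by a constant (independent of $m$, using $\|\mathscr{L}_{g_m}\|_{\mcB\to\mcB} \le C_2$ from Lemma \ref{lem:216} and $\|\mathscr{L}_g - \mathscr{L}_{g_m}\|_{\mcB\to\mcB} \le C_3$) times $a_n(\mathscr{L}_g) + a_n(\mathscr{L}_{g_m})$, whose $n > N$ tails are small uniformly in $m$ by part (ii)'s estimates (with $q=1$; note $2/(-\log_r R) \ge 2q/(-\log_r R) > 1$). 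For the head $1 \le n \le N$: here I use $a_n \le \|\cdot\|_{\mcB\to\mcB}$ and the same identity to get $\sum_{n \le N} a_n(\mathscr{L}_g^q - \mathscr{L}_{g_m}^q) \le N \cdot q \, C_2^{q-1} \|\mathscr{L}_g - \mathscr{L}_{g_m}\|_{\mcB\to\mcB} \le N q C_2^{q-1} C_3 \theta_m$, which tends to $0$ as $m \to \infty$ by Lemma \ref{lem:950} since $\theta_m = Dr^m \to 0$. Combining, given $\varepsilon > 0$ fix $N$ making the tail bound $< \varepsilon/2$ for all large $m$, then take $m$ large enough that the head is $< \varepsilon/2$. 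The main obstacle is organizing the tail estimate so that the bound on $a_n(\mathscr{L}_g^q - \mathscr{L}_{g_m}^q)$ is genuinely uniform in $m$; this is where one must be careful that the $R$ (hence the exponent and $N_1$) in Corollary \ref{cor:517} and the constants $C_2, C_3$ are all chosen once and for all depending only on $b_1, b_2$, not on $m$.
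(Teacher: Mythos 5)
Your part (i) and the memberships $\mathscr{L}_g^q,\mathscr{L}_{g_m}^q\in\mathfrak{L}_1^{(a)}(\mcB)$ follow the paper's route (Corollary \ref{cor:517} applied to both $g$ and $g_m$, the latter satisfying (\ref{condi:029}) by Lemma \ref{lem:681}), and your head estimate $\sum_{n\le N}a_n\le N\,q\,C_2^{q-1}C_3\theta_m$ is exactly the paper's use of Lemma \ref{lem:950}. One bookkeeping remark: the chain you write, $r^{2q}e^{h_{\mathrm{top}}(\sigma_{{\bf A}})}\le r^{2p}e^{h_{\mathrm{top}}(\sigma_{{\bf A}})}<1$, holds when $q\ge p$, not when $p\ge q$; this is the hypothesis the paper's own proof actually uses (its inequality (\ref{ineq:618})) and the one needed in Theorem \ref{thm:925}(ii), so the printed ``$p\ge q$'' should be read as $q\ge p$ --- but you cannot simultaneously assume $p\ge q$ and deduce $r^{2q}\le r^{2p}$.

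The genuine gap is in your tail estimate. You bound $a_n(\mathscr{L}_g^q-\mathscr{L}_{g_m}^q)$ by a constant times $a_n(\mathscr{L}_g)+a_n(\mathscr{L}_{g_m})$ and then invoke summability of these \emph{first-power} approximation numbers, justified by ``$2/(-\log_rR)\ge 2q/(-\log_rR)>1$''. That inequality is false for $q\ge2$, and when $p>1$ (exactly the case in which Theorem \ref{thm:925} needs $q\ge2$ and $k_0\ge1$) one only knows $2p/(-\log_rR)>1$; the exponent $2/(-\log_rR)$ may be $\le1$, so $\sum_n a_n(\mathscr{L}_g)$ can diverge and $\mathscr{L}_g\notin\mathfrak{L}_1^{(a)}(\mcB)$. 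Splitting the telescoping identity term by term does not repair this: the middle terms $\mathscr{L}_g^{q-1-j}(\mathscr{L}_g-\mathscr{L}_{g_m})\mathscr{L}_{g_m}^{j}$ leave at best roughly $(q-1)/2$ powers on one side of the difference, and $2\lceil(q-1)/2\rceil/(-\log_rR)>1$ does not follow from $2q/(-\log_rR)>1$. (There is also the smaller point that approximation numbers are not subadditive at a fixed index; one needs $a_{2l-1}(S+T)\le a_l(S)+a_l(T)$ together with an index-doubling of the tail.) The paper avoids all of this by estimating the tail with the $q$-th powers themselves: $\sum_{n\ge2n_0}a_n(\mathscr{L}_g^q-\mathscr{L}_{g_m}^q)\le2\sum_{l\ge n_0}\bigl(a_l(\mathscr{L}_g^q)+a_l(\mathscr{L}_{g_m}^q)\bigr)$, and both series are small beyond $n_0$ uniformly in $m$ by Corollary \ref{cor:517}, since its constants depend only on $b_1,b_2$ and $g_m$ satisfies (\ref{condi:029}). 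Replacing your tail step by this one (keeping your head estimate and your order of choices) gives a complete proof; as written, your argument only covers the case $p\le1$.
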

\begin{proof}
We prove (i) and the former part of (ii).
From (i) and (ii) in Lemma \ref{lem:681}, $g_m$ also satisfies (\ref{condi:029}). 
Thus, it is enough to prove the assertions only for $g$. 
Take $R>e^{h_{\mathrm{top}}(\sigma_{{\bf A}})}$ with $r^{2p}R<1$. 
Then,
\begin{equation}
\label{ineq:618}
-\frac{2q}{\log_rR}\ge-\frac{2p}{\log_rR}>1.
\end{equation}
Hence, $\mathscr{L}_g\in\mathfrak{L}_p^{(a)}(\mcB)$ and $\mathscr{L}_g^q\in\mathfrak{L}_1^{(a)}(\mcB)$ follow from Corollary \ref{cor:517}.

We prove the latter part of (ii).
Let  $N_1\in\N$ be as in Corollary \ref{cor:517}. 
Let $\epsilon>0$. 
By (\ref{ineq:618}), we can take $n_0\ge N_1$ so that $\sum_{n\ge n_0}(C_4/r^2)^q(n-1)^{2q/\log_rR}<\epsilon$.
Moreover, by Lemma \ref{lem:950}, we can take $m_0\in\N$ so that $\theta_{m+1}\le1$ and $n_0\|\mathscr{L}_g^q-\mathscr{L}_{g_m}^q\|_{\mcB\to\mcB}<\epsilon$ for $m\ge m_0$.

Let $m\ge m_0$. 
We show $\sum_{n=1}^{\infty}a_n(\mathscr{L}_g^q-\mathscr{L}_{g_m}^q)<6\epsilon$. 
We easily have
\[
\sum_{1\le n<2n_0}a_n(\mathscr{L}_g^q-\mathscr{L}_{g_m}^q)\le2n_0a_1(\mathscr{L}_g^q-\mathscr{L}_{g_m}^q)=2n_0\|\mathscr{L}_g^q-\mathscr{L}_{g_m}^q\|_{\mcB\to\mcB}<2\epsilon.
\]
On the other hand, we have
\[
\sum_{n\ge2n_0}a_n(\mathscr{L}_g^q-\mathscr{L}_{g_m}^q)\le\sum_{l\ge n_0}\sum_{n:2l\le n<2(l+1)}a_n(\mathscr{L}_g^q-\mathscr{L}_{g_m}^q)\le2\sum_{l\ge n_0}a_{2l}(\mathscr{L}_g^q-\mathscr{L}_{g_m}^q).
\]
By \cite[Theorem 2.3.3]{Pietsch}, $a_{2l}(\mathscr{L}_g^q-\mathscr{L}_{g_m}^q)\le a_{2l-1}(\mathscr{L}_g^q-\mathscr{L}_{g_m}^q)\le a_{l}(\mathscr{L}_g^q)+a_{l}(\mathscr{L}_{g_m}^q)$. 
Thus, by Corollary \ref{cor:517}, we have
\[
\sum_{n\ge2n_0}a_n(\mathscr{L}_g^q-\mathscr{L}_{g_m}^q)\le2\left\{\sum_{l\ge n_0}a_l(\mathscr{L}_g^q)+\sum_{l\ge n_0}a_l(\mathscr{L}_{g_m}^q)\right\}<4\epsilon,
\]
as desired.
\end{proof}

We are ready to prove Theorem \ref{thm:925}.

\begin{proof}[Proof of Theorem \ref{thm:925}]
(i) follows from Corollary \ref{cor:029} (i) and (I) in Section \ref{sec:introduction}.

(ii)\ Let $f_m=E_mf$. 
Then, $\sum_{\omega\in\Per_q(\sigma_{{\bf A}})}e^{S_qf(\omega)}=\lim_{m\to\infty}\sum_{\omega\in\Per_q(\sigma_{{\bf A}})}e^{S_qf_m(\omega)}$.
Since $f_m$ is locally constant, $\sum_{\omega\in\Per_q(\sigma_{{\bf A}})}e^{S_qf_m(\omega)}=\sum_{n=1}^{\infty}\lambda_n(f_m)^q$.
Moreover, 
$\lim_{m\to\infty}\sum_{n=1}^{\infty}\lambda_n(f_m)^q=\sum_{n=1}^{\infty}\lambda_n(f)^q$ follows from Corollary \ref{cor:029} (ii) and (II) in Section \ref{sec:introduction}. 
Thus, (\ref{eq:008}) holds.

(iii)\ From \cite[Theorem 2.6.5]{Boas}, the infinite product $P(z)=\prod_{n=1}^{\infty}E(z\lambda_n(f),k_0)$ converges uniformly on any compact set of $\C$.
Therefore, it is enough to show the following equality for $z\in\C$ with sufficiently small $|z|$:
\[
P(z)=\exp\left(-\sum_{q=k_0+1}^{\infty}\frac{z^q}{q}\sum_{\omega\in\Per_q(\sigma_{{\bf A}})}e^{S_qf(\omega)}\right).
\]
We denote by $\Log$ the principle branch of the complex logarithm. 
By $\Log(1-z)=-\sum_{n\ge1}z^n/n$ for $|z|<1$, we have $P(z)=\exp(-\sum_{n=1}^{\infty}\sum_{q=k_0+1}^{\infty}\frac{z^q}{q}\lambda_n(f)^q)$ for $z\in\C$ with sufficiently small $|z|$.
Since $\sum_{n=1}^{\infty}|\lambda_n(f)|^{k_0+1}<\infty$, we can exchange the order of the summation and we have $P(z)=\exp(-\sum_{q=k_0+1}^{\infty}\frac{z^q}{q}\sum_{n=1}^{\infty}\lambda_n(f)^q)=\exp(-\sum_{q=k_0+1}^{\infty}\frac{z^q}{q}\sum_{\omega\in\Per_q(\sigma_{{\bf A}})}e^{S_qf(\omega)})$, as desired.
\end{proof}

We show that, for each $r\in(0,1)$, there exists a non-locally constant $f\in V$ satisfying (\ref{cond:209}). 

\begin{example}
\label{exa:004}
\textrm{
Let ${\bf A}=\left(\begin{smallmatrix}
1&1\\
1&1
\end{smallmatrix}\right)$. 
Take two sequences $\{\theta_m^{(1)}\}_{m\in\N\cup\{0\}},\,\{\theta_m^{(2)}\}_{m\in\N\cup\{0\}}$ so that 
$\theta_m^{(1)}>\theta_m^{(2)}>0,\ m\in\N\cup\{0\}$, and 
\[
1>\theta_m^{(1)} \downarrow 0,\qquad 1>\theta_m^{(2)} \downarrow0,\qquad 
\limsup_{m\to\infty}\frac{\theta_m^{(2)}}{\theta_m^{(1)}}<1.
\]
We define $f:\Sigma_{{\bf A}}^+\to\R$ by 
$f(\omega)=\sum_{m\ge0}\{\theta_m^{(\omega_m)}\}^{1/m}$. 
Then, it is easy to see that $\var_m(f)^{1/m}\asymp\theta_m^{(1)}$, that is, there exists $C>0$ such that $C^{-1}\le\var_m(f)^{1/m}/\theta_m^{(1)}\le C$ for $m\in\N$.
Hence, for 
$r\in(0,1)$, $\theta_m^{(1)}=r^m$ and $\theta_m^{(2)}=(1/2)\theta_m^{(1)}$, 
$f$ is a real-valued, non-locally constant function in $V$ such that $\var_m(f)^{1/m}\asymp r^m$.
}
\end{example}

\appendix
\section{Transfer operators on $V$}
\label{appendix:transfer operator}

A metrizable topological vector space is said to be \emph{complete} if every Cauchy sequence converges. 
Note that our space $V$ is metrizable and complete since $V$ is a Fr\'{e}chet space (see Proposition \ref{Prop:715}).
We recall the following definition of a compact operator on a metrizable and complete topological vector space. 

\begin{definition}
Let $X$ be a metrizable and complete topological vector space and $T$ a continuous linear operator on $X$. 
We say that $T$ is a \emph{compact operator} if the closure $\overline{T(N)}$ of the image $T(N)$ is compact for some neighborhood $N$ of zero.
\end{definition}

In this appendix, we prove the following two theorems:

\begin{theorem}
\label{thm:904}
$\mathscr{L}_{f}:V\to V$ is not a compact operator for any $f\in V$.
\end{theorem}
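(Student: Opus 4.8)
The goal is to show that for every $f \in V$, the transfer operator $\mathscr{L}_f : V \to V$ fails to be compact, i.e., that there is \emph{no} neighborhood $N$ of zero in $V$ whose image $\mathscr{L}_f(N)$ has compact closure. A neighborhood of zero in the Fréchet space $V$ contains a basic set of the form $\{\phi : \|\phi\|_{1/(n+1)} \le \varepsilon\}$ for some $n$ and $\varepsilon > 0$, so it suffices to exhibit, for each such basic neighborhood, a sequence $(\phi_k)$ in it such that $(\mathscr{L}_f \phi_k)$ has no convergent subsequence in $V$. Equivalently — since convergence in $V$ implies convergence in every $\|\cdot\|_\theta$ — it is enough to find a bounded (in some $\|\cdot\|_\theta$) sequence whose images fail to be relatively compact in some $\|\cdot\|_{\theta'}$. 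The natural strategy is to build $\phi_k$ supported deep in the shift space, on cylinders $[w]$ for longer and longer admissible words $w$, so that $\mathscr{L}_f \phi_k$ are essentially ``bumps'' living on disjoint cylinders of a fixed short length; such bumps, if they have a uniform lower bound on their $\|\cdot\|_{\theta'}$-distance from one another, cannot have a convergent subsequence.

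\textbf{Construction.} Fix a basic neighborhood, so fix $\theta \in (0,1)$ and $\varepsilon > 0$; I want $\phi_k$ with $\|\phi_k\|_\theta \le \varepsilon$. Using aperiodicity of $\mathbf{A}$ (Lemma \ref{lem:203} gives a row with two ones, and aperiodicity gives connectivity), choose, for each large $k$, two distinct admissible words $u^{(k)}, v^{(k)}$ of length $k$ that agree in their first coordinate (so that $u^{(k)}\omega$ and $v^{(k)}\omega$ lie over a common point after one application of $\sigma_{\mathbf{A}}$) but differ somewhere, with the branch structure arranged so that the two preimages $u^{(k)}\omega, v^{(k)}\omega$ of a point $\omega$ under $\sigma_{\mathbf{A}}^k$ have controlled $e^{S_k f}$-weights. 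Define $\phi_k$ to be $d_\theta$-Lipschitz, bounded by $\varepsilon \theta^{k}$ (up to the Lipschitz constant), supported on $[u^{(k)}] \cup [v^{(k)}]$, taking opposite signs (or suitably chosen values) on the two cylinders. Then $\|\phi_k\|_\theta \le \varepsilon$ for $k$ large, since the sup-norm is $O(\varepsilon\theta^k)$ and the Lipschitz seminorm is controlled because the function only varies across cylinders of depth $\ge k$. Now $\mathscr{L}_f^{\,?}\phi_k$ — actually $\mathscr{L}_f \phi_k$ itself after accounting for the $k$-fold structure: more carefully, one arranges $\phi_k$ so that $(\mathscr{L}_f \phi_k)(\omega) = e^{f(u^{(k)}\omega)}\phi_k(u^{(k)}\omega) + e^{f(v^{(k)}\omega)}\phi_k(v^{(k)}\omega)$ for $\omega$ in a suitable cylinder and $0$ elsewhere; choosing the values of $\phi_k$ on $[u^{(k)}],[v^{(k)}]$ to exploit the difference $e^{f(u^{(k)}\omega)} - e^{f(v^{(k)}\omega)}$, and noting $\var_k(f)^{1/k} \to 0$ so this difference is $o(\text{anything polynomial in }\theta^k)$ but the \emph{sum} need not cancel, the image $\mathscr{L}_f \phi_k$ is a nonzero function whose sup-norm is bounded below, up to constants depending on $f$, by $\min_\omega e^{f(\omega)}$ times $\varepsilon\theta^k$ — which still tends to zero. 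This is the wrong scaling, so instead I rescale: set $\psi_k = \phi_k / (\varepsilon\theta^k)$-type normalization is also wrong since then $\psi_k \notin N$. The correct device: use not one word but a large family, and a \emph{fixed} target depth.

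\textbf{The right argument, and the main obstacle.} Rather than fight the scaling, I would argue by contradiction using the \emph{equicontinuity} characterization of compactness: if $\mathscr{L}_f$ were compact, then for a suitable zero-neighborhood $N$, $\overline{\mathscr{L}_f(N)}$ would be compact, hence bounded and — crucially — the family $\mathscr{L}_f(N)$ would be uniformly ``super-continuous with a uniform rate'', meaning $\sup_{\phi \in N}\var_m(\mathscr{L}_f\phi)^{1/m} \to 0$ uniformly, and moreover on each $F_\theta$-completion $V_\theta$ the image would be precompact in $\|\cdot\|_{\theta'}$ for $\theta' > \theta$. I would then test this against functions of the form $\phi = c_w \mathbf{1}_{[w]}$ smoothed to be Lipschitz, where $w$ ranges over admissible words of a \emph{single fixed length} $m_0$ large enough that $\dim L_{m_0}$ is large, normalized so that $\|\phi\|_\theta = \varepsilon$; since $L_{m_0} \subset V$ is finite-dimensional this family is a sphere in a finite-dimensional space and poses no obstruction by itself — so instead I let $m_0 \to \infty$ and observe that $\mathscr{L}_f$ restricted to the unit ball of $L_{m}$ (in $\|\cdot\|_\theta$) maps onto a set whose $\|\cdot\|_{\theta'}$-diameter does \emph{not} shrink as $m \to \infty$, because $\mathscr{L}_f$ is, up to a bounded perturbation, an isometry-like ``spreading'' map on high-frequency components: concretely, pick $\omega^{(1)} \ne \omega^{(2)}$ agreeing to depth $m-1$, let $\phi_m$ be the Lipschitz bump of height $\theta^{m-1}$ (so $\|\phi_m\|_\theta \asymp 1$) distinguishing exactly these two points, and compute that $\mathscr{L}_f\phi_m$ distinguishes their images $\sigma_{\mathbf{A}}\omega^{(1)}, \sigma_{\mathbf{A}}\omega^{(2)}$ (which agree only to depth $m-2$) by an amount $\gtrsim e^{-\|\Re f\|_\infty}\theta^{m-1}$, forcing $\|\mathscr{L}_f\phi_m\|_{\theta'} \gtrsim e^{-\|\Re f\|_\infty}(\theta/\theta')^{m-1}\cdot(\theta')^{-1}$ — wait, this again decays. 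The genuine content, and the real obstacle, is that compactness of $\mathscr{L}_f$ on the \emph{Fréchet} space $V$ is strictly weaker than compactness on any single Banach space $F_\theta$, so one must use the interplay of the full family of seminorms: I expect the proof to proceed by showing that for any zero-neighborhood $N$ (which constrains only finitely many seminorms $\|\cdot\|_{\theta_1}, \ldots, \|\cdot\|_{\theta_j}$), one can find $\phi_k \in N$ whose images $\mathscr{L}_f\phi_k$ stay bounded away from each other in a seminorm $\|\cdot\|_{\theta_0}$ with $\theta_0$ \emph{smaller} than all the $\theta_i$ — exploiting that $N$ imposes essentially no constraint on the behavior of $\phi_k$ on very deep cylinders when measured in a coarse (small-$\theta_0$) norm versus a fine (large-$\theta_i$) norm, because $\|\cdot\|_{\theta_0}$ on a bump of depth $m$ is larger than $\|\cdot\|_{\theta_i}$ by a factor $(\theta_0/\theta_i)^{-m} \to \infty$. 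Thus one scales $\phi_k$ to have $\|\phi_k\|_{\theta_i} \le \varepsilon$ for all $i$ while $\|\mathscr{L}_f\phi_k\|_{\theta_0} \ge 1$, and arranges (using disjoint supports at growing depths and the lower bound $\inf e^{\Re f} > 0$ from compactness of $\Sigma_{\mathbf{A}}^+$) that the $\mathscr{L}_f\phi_k$ are mutually $\|\cdot\|_{\theta_0}$-separated, precluding a convergent subsequence. The main obstacle is precisely this bookkeeping: verifying that the separation in the coarse seminorm survives application of $\mathscr{L}_f$ (which only mildly smooths, by Lemma \ref{lem:302}) while the fine-seminorm smallness is preserved on $N$ — this requires choosing the depths $m_k$ of the supports increasing fast enough that the cross-terms and the Lipschitz contributions in $\|\cdot\|_{\theta_0}$ do not swamp the main bump, and handling the dependence of $\mathscr{L}_f$ on $f$ uniformly via $\|\Re f\|_\infty < \infty$ and $\var_m(f) \to 0$.
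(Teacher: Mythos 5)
Your final paragraph does land on the same mechanism as the paper's proof: by monotonicity $\|\cdot\|_{\theta^{\prime}}\le\|\cdot\|_{\theta}$ for $\theta<\theta^{\prime}$, a zero-neighborhood effectively constrains a single ball $\{\|\phi\|_{\theta_0}<\epsilon\}$, which is unbounded in every stronger norm $\|\cdot\|_{\theta}$ with $\theta<\theta_0$ (witnessed by scaled indicators of deep cylinders), and one then pushes such deep functions through $\mathscr{L}_f$ and shows their size in the stronger seminorm survives, so the image of the neighborhood is not relatively compact. Your reduction to basic neighborhoods is legitimate (a closed subset of a compact set is compact), and it even spares you the paper's first case ($\sup_{\phi\in A}\|\phi\|_{\infty}=\infty$), which the paper must treat because it works with a general neighborhood satisfying (\ref{eq:111}) and (\ref{eq:222}).

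The genuine gap is that the decisive step --- a \emph{lower} bound of the form $\|\mathscr{L}_f\phi_k\|_{\theta_0}\ge1$ (or $\to\infty$) --- is asserted but never established. Your two attempted computations are abandoned (the second one in fact works if you measure the image in a norm with $\theta^{\prime}<\theta$, since $(\theta/\theta^{\prime})^{m-1}\to\infty$; your ``this again decays'' is a miscalculation), and the tool you invoke for the survival of largeness, Lemma \ref{lem:302}, is an \emph{upper} bound on $\var_k(\mathscr{L}_f\phi)$ and can never produce a lower bound; moreover, for general test functions the sum over inverse branches can cancel, which is exactly the worry you raise in your first construction and never dispose of. The paper closes this with one small device you are missing: replace $\phi$ by $(\phi/2)1_{[i]}$ (which stays in the neighborhood, by property (\ref{eq:111})), so that only the single inverse branch through the symbol $i$ contributes and, as in (\ref{eq:407}), $(\mathscr{L}_f\phi)(\sigma_{{\bf A}}\omega)=e^{f(\omega)}\phi(\omega)$ at the relevant points. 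Then $e^{\min\Re f}>0$ gives the lower bound, the cross term $\phi(\tilde\omega)\{e^{f(\omega)}-e^{f(\tilde\omega)}\}$ is controlled by the uniform sup bound on the neighborhood together with $\|e^{f}\|_{\theta}<\infty$, and one gets $\|\mathscr{L}_f\phi_n\|_{\theta}\to\infty$. Unboundedness of the image seminorms already precludes convergent subsequences, so the ``mutual $\|\cdot\|_{\theta_0}$-separation'' bookkeeping you plan (which itself needs care, since disjointness of supports does not by itself lower-bound a Lipschitz seminorm of a difference) is unnecessary. Until the single-branch localization (or some substitute giving the lower bound) is supplied, the proposal does not yet constitute a proof.
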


\begin{theorem}
\label{thm:205}
If ${\bf A}=\left(\begin{smallmatrix}
1&1\\
1&1
\end{smallmatrix}\right)$, then there exists a real-valued $f\in V$ such that $\sum_{n\ge1}|\lambda_{n}(f)|=\infty$. 
\end{theorem}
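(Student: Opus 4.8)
The plan is to realise $f$ on the full two-shift as a superposition of infinitely many locally constant ``building blocks'' placed at extremely remote, pairwise disjoint batches of coordinates, so that the transfer operator of $f$ picks up a whole cluster of eigenvalues from each block while $f$ itself remains super-continuous. Concretely, I would fix rapidly growing scales $n_1<n_2<\cdots$, widths $m_k$ and amplitudes $\epsilon_k>0$ with $\sum_k\epsilon_k<\infty$, let $h_k\in V$ be a small generic perturbation (of size $\sim\epsilon_k$ in the supremum norm) of the zero potential inside the algebra of functions depending only on the coordinates $\omega_{n_k},\dots,\omega_{n_k+m_k-1}$, and set $f=\sum_{k\ge1}h_k$. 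A workable choice of parameters is $m_k\sim k!$, $\epsilon_k=2^{-m_k}$ and $n_k=\sum_{j<k}m_j$.

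First I would check that $f\in V$. Because the blocks occupy pairwise disjoint coordinate batches, $\var_m(f)\le\sum_{k:\,n_k+m_k>m}\mathrm{osc}(h_k)\lesssim\sum_{k:\,n_k+m_k>m}\epsilon_k$, and with the choice above the smallest $k$ entering the sum has $k!\gtrsim m$, so the tail is bounded by roughly $2^{-k_0 m}$ with $k_0=k_0(m)\to\infty$; hence $\var_m(f)^{1/m}\to0$. This part uses only elementary variants of the estimates in Section \ref{sec:approximation}.

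The heart of the proof is to show that for each $k$ the operator $\mathscr{L}_f$ carries roughly $m_k$ eigenvalues of modulus bounded below by a constant multiple of $\epsilon_k^{1/m_k}$. The mechanism is the following: on the cylinder algebra of the coordinates $\omega_{n_k},\dots,\omega_{n_k+m_k-1}$ the ``undisturbed'' operator behaves, once the dynamics brings these coordinates down to the front, like the zero-potential transfer operator on a cylinder algebra, which carries a nilpotent Jordan chain of length of order $m_k$; a perturbation of size $\epsilon_k$ spreads the corresponding eigenvalues to modulus of order $\epsilon_k^{1/m_k}$. With $\epsilon_k=2^{-m_k}$ this is bounded below (of order $1/2$), so block $k$ contributes $\gtrsim m_k$ eigenvalues of modulus $\gtrsim 1/2$ and therefore $\sum_n|\lambda_n(f)|\gtrsim\sum_k m_k=\infty$. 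To run this rigorously I would pass to the Banach space $\mcB=\mcB(\{\theta_m(f)\})$ supplied by Theorem \ref{thm:919}, on which $\mathscr{L}_f$ is compact with $\Lambda_f=\sigma(\mathscr{L}_f\colon\mcB\to\mcB)\setminus\{0\}$, identify for each $k$ a finite-dimensional ``near-invariant'' subspace of $\mcB$ adapted to the $k$-th block, and apply standard eigenvalue-stability for compact operators.

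The hard part will be exactly this last decoupling: the transfer operator does not split over the summands of $f$, and the naive comparison of $\mathscr{L}_f$ with a single-block operator only controls eigenvalues up to $O(\sum_{k\ge1}\epsilon_k)$, which does not isolate the contributions of the distant blocks. The remedy I would pursue is a genuinely localized estimate: in coordinates adapted to the $k$-th block one shows, using $e^{f}=\prod_j e^{h_j}$ and the disjointness of the supports, that on the relevant subspace $\mathscr{L}_f$ is conjugate — up to an error that is super-exponentially small because both $n_k\to\infty$ and $\epsilon_k\to0$ so fast — to the isolated block operator, and then feeds this bound into a quantitative perturbation lemma on $\mcB$.
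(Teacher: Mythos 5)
Your central mechanism cannot work for any choice of parameters, because it contradicts facts you yourself invoke. You aim to produce, for infinitely many $k$, about $m_k$ eigenvalues of $\mathscr{L}_f$ of modulus $\gtrsim 1/2$, and to sum these. But for every $f\in V$ the set $\Lambda_f$ is a discrete subset of $\C\setminus\{0\}$ with finite multiplicities (\cite{Pollicott Invent 1986}, quoted in Section \ref{sec:introduction}), and by Theorem \ref{thm:919} $\mathscr{L}_f$ is compact on $\mcB(\{\theta_m(f)\})$ with $\Lambda_f=\sigma(\mathscr{L}_f)\setminus\{0\}$; hence only finitely many eigenvalues can exceed any fixed positive threshold. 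The error that hides this is in your super-continuity check: with contiguous blocks $n_k=\sum_{j<k}m_j$, $m_k\sim k!$ and $\epsilon_k=2^{-m_k}$, the tail at depth $m$ is of size $\epsilon_{k_0}=2^{-m_{k_0}}$ (not $2^{-k_0m}$), and for $m=n_{k_0}+m_{k_0}-1\approx m_{k_0}$ one gets $\var_m(f)^{1/m}\approx 1/2\not\to0$, so your $f$ is not in $V$ at all. Repairing this is self-defeating: if $h_k$ genuinely depends on its last coordinate, super-continuity forces $\epsilon_k^{1/(n_k+m_k)}\to0$, hence $\epsilon_k^{1/m_k}\to0$, so the per-block eigenvalue scale in your Jordan-block heuristic tends to $0$ and the uniform lower bound disappears. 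There is a further squeeze from Theorem \ref{thm:925}: if the repaired potential ends up with $\var_m(f)^{1/m}=O(r^m)$ for some $r<1/\sqrt2$ (so $r^2e^{h_{\mathrm{top}}(\sigma_{\bf A})}<1$ with $p=1$), then $\sum_n|\lambda_n(f)|<\infty$, the opposite of what you want. So any block construction would have to be tuned so that the eigenvalues decay to zero no faster than $1/n$ while $\var_m(f)^{1/m}\to0$ but not too quickly, and your proposal does not address this; the admitted ``decoupling'' difficulty is downstream of these obstructions.

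For comparison, the paper's proof is short and takes a completely different route: by \cite[Proposition 4.1]{Jezequel 2018} there is a real-valued $f\in V$ with $\zeta_f(z)^{-1}=1-2z-z(1-z)\sin z$; elementary calculus locates zeros $\eta_n\in(2n\pi,2n\pi+\tfrac{\pi}{2})$ of $\sin x=(1-2x)/\{x(1-x)\}$ for large $n$, and Theorem \ref{thm:415} converts these zeros of $\zeta_f^{-1}$ into eigenvalues $\lambda\approx 1/\eta_n$ of $\mathscr{L}_f$, whence $\sum_n|\lambda_n(f)|\ge\sum_n 1/(2n\pi+\tfrac{\pi}{2})=\infty$. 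Note that there the divergence comes from infinitely many eigenvalues of size comparable to $1/n$, exactly the regime your construction would have to reach but, as set up, cannot.
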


First, we prove Theorem \ref{thm:205}.

\begin{proof}[Proof of Theorem \ref{thm:205}]
\cite[Proposition 4.1]{Jezequel 2018} implies that there exists a real-valued $f\in V$ such that $\zeta_{f}(z)^{-1}=1-2z-z(1-z)\sin z$. 
For $x\in\R$, we write $F(x)=(1-2x)/\{x(1-x)\}$ and $G(x)=\sin x-F(x)$. 
Since $F(x)>0$ for $x>1$ and $\lim_{x\to\infty}F(x)=0$, we find that $G(2n\pi)<0$ and $G(2n\pi+(\pi/2))>0$ for sufficiently large $n\in\N$. 
Thus, for sufficiently large $n\in\N$, there exists $\eta_n\in(2n\pi,\ 2n\pi+\frac{\pi}{2})$ such that $G(\eta_n)=0$. 
Therefore, we have
\[
\sum_{n\ge1}|\lambda_n(f)|\ge\sum_{n:\,\text{large}}\frac{1}{\eta_n}\ge\sum_{n:\,\text{large}}\frac{1}{2n\pi+\frac{\pi}{2}}=\infty,
\]
as desired
\end{proof}

Next, we prove Theorem \ref{thm:904}.
For the sake of simplicity, we write
\[
\var_{m}^{\theta}(\phi)=\frac{\var_{m}(\phi)}{\theta^m}
\]
for $\theta\in(0,1),\phi\in F_{\theta}$ and $m\in\N\cup\{0\}$.

To prove Theorem \ref{thm:904}, we need the following lemma:

\begin{lemma}
Let $A\subset V$ be a neighborhood of zero. 
Then there exists an open neighborhood 
$G\subset A$ of zero satisfying the following two conditions:
\begin{align}
&\frac{\phi}{2}1_{[i]}\in G \quad \mbox{for}\ \phi\in G\ \mbox{and}\ i\in\{1,\dots,N\}, 
\label{eq:111}\\
&\mbox{there exists}\ \theta\in(0,1)\ \mbox{such that}\ \sup_{\phi\in G}\|\phi\|_{\theta}=\infty.
\label{eq:222}
\end{align}
\end{lemma}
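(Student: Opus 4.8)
The plan is to take for $G$ a single basic ball around the origin, namely $G=\{\phi\in V:\|\phi\|_{\theta_0}<\epsilon_0\}$ for suitable $\theta_0\in(0,1)$ and $\epsilon_0>0$ chosen so that $G\subset A$. Such a choice is possible: since $\|\cdot\|_{\theta'}\le\|\cdot\|_{\theta}$ whenever $\theta<\theta'$, any finite intersection $\bigcap_{j=1}^{k}\{\phi\in V:\|\phi\|_{\theta_j}<\epsilon_j\}$ contains the single ball with exponent $\theta_*=\min_j\theta_j$ and radius $\epsilon_*=\min_j\epsilon_j$, so the balls $\{\phi\in V:\|\phi\|_{\theta}<\epsilon\}$ with $\theta\in(0,1)$ and $\epsilon>0$ form a neighborhood basis of zero in $V$, and any neighborhood $A$ of zero contains one of them. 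Each such ball is open, so it remains only to verify (\ref{eq:111}) and (\ref{eq:222}) for this $G$.

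For (\ref{eq:111}) I would first prove the key estimate that $\phi\mapsto\frac{\phi}{2}1_{[i]}$ is non-expansive for every norm $\|\cdot\|_{\theta}$, i.e.\ $\|\frac{\phi}{2}1_{[i]}\|_{\theta}\le\|\phi\|_{\theta}$ for all $\theta\in(0,1)$, $\phi\in F_{\theta}$ and $i\in\{1,\dots,N\}$. This is a short computation based on the fact that $1_{[i]}$ equals $1$ on the cylinder $[i]$ and $0$ off it: if $\omega,\omega'$ agree on the first $m\ge1$ coordinates they belong to $[i]$, or to its complement, simultaneously, so $\var_m(\phi1_{[i]})\le\var_m(\phi)$ for $m\ge1$; and comparing a point of $[i]$ with one outside $[i]$ gives $\var_0(\phi1_{[i]})\le\max(\var_1(\phi),\|\phi\|_{\infty})\le\|\phi\|_{\theta}$. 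Hence $[\phi1_{[i]}]_{\theta}\le\|\phi\|_{\theta}$ and $\|\phi1_{[i]}\|_{\theta}\le\|\phi\|_{\infty}+\|\phi\|_{\theta}\le2\|\phi\|_{\theta}$, and dividing by $2$ finishes it. Applying this with $\theta=\theta_0$ yields $\frac{\phi}{2}1_{[i]}\in G$ whenever $\phi\in G$ (note also $\frac{\phi}{2}1_{[i]}\in V$, again because $\var_m(\phi1_{[i]})\le\var_m(\phi)$ for $m\ge1$).

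For (\ref{eq:222}) I would fix any $\theta\in(0,\theta_0)$ and exhibit elements of $G$ of arbitrarily large $\|\cdot\|_{\theta}$-norm. Using aperiodicity together with Lemma \ref{lem:203}, for all large $n$ one may choose an ${\bf A}$-admissible word $v$ with $|v|=n+1$ such that $v_n$ can be changed to another admissible value while the prefix $v_0\cdots v_{n-1}$ is kept fixed; put $\phi_n=\theta_0^{n}1_{[v]}\in L_{n+1}\subset V$. A direct estimate gives $\|\phi_n\|_{\theta_0}\le3$ (since $\var_m(\phi_n)\le2\theta_0^{n}$ for $m\le n$ and $\var_m(\phi_n)=0$ for $m>n$), whereas $\var_n(\phi_n)=\theta_0^{n}$, so $\|\phi_n\|_{\theta}\ge\var_n(\phi_n)/\theta^{n}=(\theta_0/\theta)^{n}$. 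Therefore $\frac{\epsilon_0}{4}\phi_n\in G$ while $\|\frac{\epsilon_0}{4}\phi_n\|_{\theta}\ge\frac{\epsilon_0}{4}(\theta_0/\theta)^{n}\to\infty$, whence $\sup_{\phi\in G}\|\phi\|_{\theta}=\infty$.

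The only step that requires real care is the non-expansiveness estimate behind (\ref{eq:111}), in particular the bound on $\var_0(\phi1_{[i]})$ at level $m=0$, where one compares a point of $[i]$ with a point outside $[i]$ and must absorb the resulting $\|\phi\|_{\infty}$ and $\var_1(\phi)$ into $\|\phi\|_{\theta}$; this is exactly where the factor $\frac12$ in the statement is spent. Everything else --- that the $\|\cdot\|_{\theta}$-balls form a neighborhood basis, the elementary bounds on $\phi_n$, and the construction of the words $v$ from aperiodicity --- is routine bookkeeping, so I anticipate no serious obstacle.
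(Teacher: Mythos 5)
Your proof is correct and takes essentially the same route as the paper: $G$ is a single $\|\cdot\|_{\theta_0}$-ball contained in $A$, condition (\ref{eq:111}) comes from the bound $\|\phi 1_{[i]}\|_{\theta_0}\le 2\|\phi\|_{\theta_0}$, and condition (\ref{eq:222}) from scaled indicator functions of deep cylinders ending at a symbol with two admissible successors (Lemma \ref{lem:203}). The only cosmetic difference is that the paper produces arbitrarily long admissible words ending at that symbol by repeating a loop word through it, whereas you appeal to aperiodicity directly; this does not change the argument.
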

\begin{proof}
The collection of the sets of the form $\{\phi\in V:\|\phi\|_{\theta}<\epsilon\},\ \theta\in(0,1),\ \epsilon>0$, is a fundamental system of neighborhoods of zero. 
Thus, there exist $\theta_{0}\in(0,1)$ and $\epsilon>0$ such that the set $G=\{\phi\in V:\|\phi\|_{\theta_{0}}<\epsilon\}$ is contained in $A$. 
We prove (\ref{eq:111}) and (\ref{eq:222}) for $G$.

Let $\phi\in G$ and $i\in\{1,...,N\}$. 
If $\omega_0=\omega_0^{\prime}$, then $\phi1_{[i]}(\omega)-\phi1_{[i]}(\omega^{\prime})=\phi(\omega)-\phi(\omega^{\prime})\ (\omega_{0}=i),\ =0\ (\omega_{0}\neq i)$, and hence, for $m\in\N$, we have $\var_m^{\theta_{0}}(\phi1_{[i]})\le[\phi]_{\theta_0}$. 
Also, if $\omega_{0}\neq\omega_{0}^{\prime}$, then $\phi1_{[i]}(\omega)-\phi1_{[i]}(\omega^{\prime})=\phi(\omega)\ (\omega_{0}=i),\ =0\ (\text{$\omega_{0}\neq i$ and $\omega_{0}^{\prime}\neq i$})$, and hence, we have $\var_{0}^{\theta_{0}}(\phi1_{[i]})\le\|\phi\|_{\infty}$. 
Combining, we have
\[
\|\phi1_{[i]}\|_{\theta_{0}}=\|\phi1_{[i]}\|_{\infty}+\sup_{m\ge0}\var_m^{\theta_{0}}(\phi1_{[i]})
\le\|\phi\|_{\infty}+\max(\|\phi\|_{\infty},\,[\phi]_{\theta_{0}})\le2\|\phi\|_{\theta_{0}}.
\]
Thus, $\|(\phi/2)1_{[i]}\|_{\theta_{0}}\le\|\phi\|_{\theta_{0}}<\epsilon$, and hence, $(\phi/2)1_{[i]}\in G$. 
Therefore, (\ref{eq:111}) follows.

Lemma \ref{lem:203} ensures that there exist $i,j_1,j_2\in\{1,\dots,N\}$ such that $j_{1}\neq j_{2}$ and ${\bf A}(ij_{1})={\bf A}(ij_{2})=1$. 
Moreover, there exists $j\in\{1,\dots,N\}$ such that ${\bf A}(ji)=1$. 
Take an ${\bf A}$-admissible word $w$ such that $w_0=i$ and $w_{|w|-1}=j$. For $n\in\N$, we define
\[
\phi_{n}=\frac{\theta_{0}^{n|w|+1}\epsilon}{2}1_{[\underbrace{w\cdots w}_{n}ij_{1}]}.
\]
For $\theta\in(0,1)$, we have $\var_{n|w|+1}^{\theta}(1_{[\underbrace{w\cdots w}_{n}ij_{1}]})=1/\theta^{n|w|+1}$, and hence, 
\[
\|\phi_{n}\|_{\theta}
=\frac{\theta_{0}^{n|w|+1}\epsilon}{2}\left(1+\frac{1}{\theta^{n|w|+1}}\right).
\]
In particular, we have $\|\phi_n\|_{\theta_0}<\epsilon$. 
Therefore, $\phi_n\in G$. On the other hand, for $\theta\in(0,\theta_0)$, we see that $\|\phi_{n}\|_{\theta}\ge(\epsilon/2)(\theta_0/\theta)^{n|w|+1}\to\infty$ as $n\to\infty$, hence $\sup_{\phi\in G}\|\phi\|_{\theta}\ge\sup_{n\in\N}\|\phi_n\|_{\theta}=\infty$. 
Thus, (\ref{eq:222}) follows.
\end{proof}

\begin{proof}[Proof of Theorem \ref{thm:904}]
Fix a neighborhood $A\subset V$ of zero. 
We show that the following assertion holds:
\begin{equation}
\begin{aligned}
\label{fact:037}
&\text{there exists $\{\phi_{n}\}_{n\in\N}\subset A$ such that $\{\mathscr{L}_{f}\phi_{n}\}_{n\in\N}$ has}\\
&\text{no convergent subsequence}.
\end{aligned}
\end{equation}
Without loss of generality, we may assume that $A$ satisfies both (\ref{eq:111}) and (\ref{eq:222}) with $G$ replaced by $A$. 

First, we consider the case in which $\sup_{\phi\in A}\|\phi\|_{\infty}=\infty$. 
The inequality $\|\phi\|_{\infty}\le\sum_{i=1}^{N}\|\phi1_{[i]}\|_{\infty}$ implies that there exists $i\in\{1,\dots,N\}$ such that $\sup_{\phi\in A}\|\phi1_{[i]}\|_{\infty}=\infty$. 
Thus, we can take $\{\psi_{n}\}_{n\in\N}\subset A$ so that $\lim_{n\to\infty}\|(\psi_{n}/2)1_{[i]}\|_{\infty}=\infty$. 

We write $\phi_{n}=(\psi_{n}/2)1_{[i]}$. 
It is clear that $\{\phi_{n}\}_{n\in\N}\subset A$. 
Take $\omega^{(n)}\in\Sigma_{{\bf A}}^{+}$ so that $\|\phi_{n}\|_{\infty}=|\phi_{n}(\omega^{(n)})|$. 
Since $\|\phi_{n}\|_{\infty}>0$ implies $\omega_{0}^{(n)}=i$, we have
\begin{equation}
\label{eq:407}
\begin{aligned}
(\mathscr{L}_{f}\phi_{n})(\sigma_{{\bf A}}\omega^{(n)})=&\sum_{j:\,{\bf A}(j\omega_{1}^{(n)})=1}e^{f(j(\sigma_{{\bf A}}\omega^{(n)}))}\phi_{n}(j(\sigma_{{\bf A}}\omega^{(n)}))1_{[i]}(j(\sigma_{{\bf A}}\omega^{(n)}))\\
=&e^{f(i(\sigma_{{\bf A}}\omega^{(n)}))}\phi_{n}(i(\sigma_{{\bf A}}\omega^{(n)}))=e^{f(\omega^{(n)})}\phi_{n}(\omega^{(n)})
\end{aligned}
\end{equation}
for sufficiently large $n\in\N$. 
Hence,
\[
\|\mathscr{L}_{f}\phi_{n}\|_{\infty}\ge|(\mathscr{L}_{f}\phi_{n})(\sigma_{{\bf A}}\omega^{(n)})|=e^{\Re f(\omega^{(n)})}|\phi_{n}(\omega^{(n)})|\ge e^{\min\Re f}\|\phi_{n}\|_{\infty}\to\infty
\]
as $n\to\infty$. 
Therefore, (\ref{fact:037}) holds.

Next, we consider the case in which $\sup_{\phi\in A}\|\phi\|_{\infty}<\infty$.
We see that there exists  $\theta\in(0,1)$ such that $\sup_{\phi\in A}\|\phi\|_{\theta}=\infty$. 
Since $\|\phi\|_{\theta}=\|\phi\|_{\infty}+[\phi]_{\theta}$ and $\sup_{\phi\in A}\|\phi\|_{\infty}<\infty$, we have $\sup_{\phi\in A}[\phi]_{\theta}=\infty$. 
Moreover, for $m\in\N$, we have
\begin{align*}
\infty=\sup_{\phi\in A}[\phi]_{\theta}\le&\left(\sup_{\phi\in A}\sup_{0\le k\le m}\var_{k}^{\theta}(\phi)\right)+\left(\sup_{\phi\in A}\sup_{k>m}\var_{k}^{\theta}(\phi)\right)\\
\le&\frac{2}{\theta^m}\sup_{\phi\in A}\|\phi\|_{\infty}+\left(\sup_{\phi\in A}\sup_{k>m}\var_{k}^{\theta}(\phi)\right),
\end{align*}
and hence, we have $\sup_{\phi\in A}\sup_{k>m}\var_{k}^{\theta}(\phi)=\infty$. 
Therefore, for $k\in\N$, there exist $\omega^{(k)},\,\tilde{\omega}^{(k)}\in\Sigma_{{\bf A}}^{+}$ and $\psi_{k}\in A$ such that
\[
\omega^{(k)}\neq\tilde{\omega}^{(k)},\quad\lim_{k\to\infty}d_{\theta}(\omega^{(k)},\,\tilde{\omega}^{(k)})=0,\quad\lim_{k\to\infty}\frac{|\psi_{k}(\omega^{(k)})-\psi_{k}(\tilde{\omega}^{(k)})|}{d_{\theta}(\omega^{(k)},\,\tilde{\omega}^{(k)})}=\infty.
\]
We can choose $i\in\{1,\dots,N\}$ so that 
\[
\mathscr{N}=\{k\in\N:\omega_{0}^{(k)}=\tilde{\omega}_{0}^{(k)}=i\}
\]
is an infinite set. 
We write $\mathscr{N}=\{k_{1}, k_{2},\dots\}$, where $k_n<k_{n+1}$ for $n\in\N$. 
Notice that, for $n\in\N$, $d_{\theta}(\sigma_{{\bf A}}\omega^{(k_{n})},\,\sigma_{{\bf A}}\tilde{\omega}^{(k_{n})})>0$, and hence, $d_{\theta}(\omega^{(k_{n})},\tilde{\omega}^{(k_{n})})=\theta d_{\theta}(\sigma_{{\bf A}}\omega^{(k_{n})},\sigma_{{\bf A}}\tilde{\omega}^{(k_{n})})$.

We write $\phi_{n}=(\psi_{k_{n}}/2)1_{[i]}$. 
It is clear that $\{\phi_{n}\}_{n\in\N}\subset A$. 
By a calculation similar to (\ref{eq:407}), we have
\begin{align*}
(\mathscr{L}_{f}\phi_{n})(\sigma_{{\bf A}}&\omega^{(k_{n})})-(\mathscr{L}_{f}\phi_{n})(\sigma_{{\bf A}}\tilde{\omega}^{(k_{n})})\\
=&e^{f(\omega^{(k_{n})})}\{\phi_{n}(\omega^{(k_{n})})-\phi_{n}(\tilde{\omega}^{(k_{n})})\}-\phi_{n}(\tilde{\omega}^{(k_{n})})\{e^{f(\tilde{\omega}^{(k_{n})})}-e^{f(\omega^{(k_{n})})}\},
\end{align*}
and thus, we have
\begin{align*}
&\|\mathscr{L}_{f}\phi_{n}\|_{\theta}\ge\frac{|(\mathscr{L}_{f}\phi_{n})(\sigma_{{\bf A}}\omega^{(k_{n})})-(\mathscr{L}_{f}\phi_{n})(\sigma_{{\bf A}}\tilde{\omega}^{(k_{n})})|}{d_{\theta}(\sigma_{{\bf A}}\omega^{(k_{n})},\,\sigma_{{\bf A}}\tilde{\omega}^{(k_{n})})}\\
&\ge\theta\left\{\frac{|e^{f(\omega^{(k_{n})})}\{\phi_{n}(\omega^{(k_{n})})-\phi_{n}(\tilde{\omega}^{(k_{n})})\}|}{d_{\theta}(\omega^{(k_{n})},\,\tilde{\omega}^{(k_{n})})}-\frac{|\phi_{n}(\tilde{\omega}^{(k_{n})})\{e^{f(\omega^{(k_{n})})}-e^{f(\tilde{\omega}^{(k_{n})})}\}|}{d_{\theta}(\omega^{(k_{n})},\,\tilde{\omega}^{(k_{n})})}\right\}\\
&\ge\frac{\theta e^{\min\Re f}}{2}\,\frac{|\psi_{k_n}(\omega^{(k_{n})})-\psi_{k_n}(\tilde{\omega}^{(k_{n})})|}{d_{\theta}(\omega^{(k_{n})},\,\tilde{\omega}^{(k_{n})})}-\theta\left(\sup_{\phi\in A}\|\phi\|_{\infty}\right)\|e^{f}\|_{\theta}\to\infty
\end{align*}
as $n\to\infty$. 
Therefore, (\ref{fact:037}) holds.
\end{proof}


\section{Some properties of $V$}
\label{appendix:TVS}

We recall the following definitions of a nuclear operator and a nuclear space.

\begin{definition}
Let $E, F$ be Banach spaces and $T:E\to F$ a bounded linear operator. 
We say that $T$ is a \emph{nuclear operator} if $T$ can be written in the form $Tx=\sum_{n=1}^{\infty}\lambda_n\langle x,x^{\prime}_n\rangle y_n$, where the sequence $\{\lambda_n\}_{n\in\N}\subset\C$ is summable and both of the two sequences $\{x_{n}\}_{n\in\N}\subset E^{\prime}$ and $\{y_{n}\}_{n\in\N}\subset F$ are bounded. ($E^{\prime}$ denotes the dual Banach space of $E$.)
\end{definition}

\begin{definition}
Let $X$ be a locally convex Hausdorff topological vector space. 
We say that $X$ is a \emph{nuclear space} if, for every continuous seminorm $p$ on $X$, there exists a continuous seminorm $q$ on $X$ such that $p\le q$ and the natural embedding $\widehat{X}_{q}\to\widehat{X}_{p}$ is a nuclear operator.
Here, $\widehat{X}_{p}$ denotes the completion of $X/\ker p$ by $p$. 
\end{definition}

Let $E,F$ be Banach spaces and $T:E\to F$ a bounded linear operator. 
We extend the definition (\ref{def:785}) of the approximation numbers of $T$ to the case $E\neq F$. 
For $n\in\N$, the $n$-th \emph{approximation number} $a_n(T:E\to F)$ of $T$ is defined by 
\begin{align*}
a_n(T:E&\to F)\\
=&\inf\{\|T-A\|:A:E\to F\ \mbox{is a finite-rank operator with}\ \rank A<n\}.
\end{align*}

In this appendix, we prove the following two theorems:

\begin{theorem}
\label{thm:988}
$V$ is a nuclear space.
\end{theorem}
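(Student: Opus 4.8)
The plan is to use the standard criterion for nuclearity via a countable generating family of seminorms together with summability of approximation numbers of the linking maps. Recall from Proposition \ref{Prop:715} (and the discussion before it) that the topology of $V$ is generated by the countable family of norms $\|\cdot\|_{1/(m+1)}$, $m\in\N$; equivalently, by the family $\{\|\cdot\|_{\theta}\}$ as $\theta$ ranges over a sequence $\theta^{(j)}\downarrow 0$, say $\theta^{(j)}=1/(j+1)$. A locally convex space whose topology is generated by a sequence of seminorms $p_1\le p_2\le\cdots$ is nuclear provided that for each $j$ there is $k>j$ such that the canonical map $\widehat{V}_{p_k}\to\widehat{V}_{p_j}$ is nuclear; and a convenient sufficient condition for a single bounded operator between Banach spaces to be nuclear is that its approximation numbers are summable, i.e. $\sum_{n}a_n(T:E\to F)<\infty$ (this is the elementary direction: summable approximation numbers give a nuclear representation by choosing near-optimal finite-rank approximants and telescoping). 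So it suffices to show: for every $\theta\in(0,1)$ there is $\theta'\in(0,\theta)$ such that the natural inclusion $\iota_{\theta',\theta}:V_{\theta'}\to V_{\theta}$ (where $V_{\theta}$ is the completion of $V$ in $\|\cdot\|_\theta$, as in the proof of Theorem \ref{thm:919}) has summable approximation numbers. Note $V_{\theta'}\subset F_{\theta'}\subset F_\theta$, and $\mcB$ is dense in each $V_\theta$, so the inclusions are well defined and the construction below stays inside $V$.

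The key step is the approximation-number estimate, and here I would reuse Lemma \ref{lem:681}(iv): for $\phi\in F_{\theta'}$ with $\theta'<\theta$ and $m\in\N$,
\[
\|\phi-E_m\phi\|_{\theta}\le 3V_m^{\theta'}(\phi)\left(\frac{\theta'}{\theta}\right)^m\le 3\|\phi\|_{\theta'}\left(\frac{\theta'}{\theta}\right)^m,
\]
since $V_m^{\theta'}(\phi)\le[\phi]_{\theta'}\le\|\phi\|_{\theta'}$. Thus $\|\iota_{\theta',\theta}-E_m\|_{V_{\theta'}\to V_{\theta}}\le 3(\theta'/\theta)^m$, and since $E_m$ has rank equal to the number of $\mathbf{A}$-admissible words of length $m$, which is $\le N^m$, we get
\[
a_{N^m+1}(\iota_{\theta',\theta}:V_{\theta'}\to V_{\theta})\le 3\left(\frac{\theta'}{\theta}\right)^m,\quad m\in\N.
\]
By the monotonicity of $a_n$ in $n$, for $N^{m-1}<n\le N^{m}$ one has $a_{n+1}\le a_{N^{m-1}+1}\le 3(\theta'/\theta)^{m-1}$, so
\[
\sum_{n=1}^{\infty}a_n(\iota_{\theta',\theta})\le a_1+\sum_{m\ge1}N^{m}\cdot 3\left(\frac{\theta'}{\theta}\right)^{m-1},
\]
which converges as soon as $N\theta'/\theta<1$, i.e. whenever we choose $\theta'<\theta/N$. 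Hence for each $\theta$ such a $\theta'$ exists, $\iota_{\theta',\theta}$ is nuclear, and nuclearity of $V$ follows.

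I do not expect a serious obstacle here; the only points requiring a little care are bookkeeping ones: (a) correctly invoking the Banach-space fact ``summable approximation numbers $\Rightarrow$ nuclear operator'' — one should cite a standard reference (e.g.\ Pietsch, as already used in the paper) or include the short telescoping argument; (b) checking that the seminorm criterion for nuclearity indeed only requires, for each member of a \emph{generating sequence} of seminorms, \emph{one} larger seminorm with nuclear linking map (this is the standard characterization for metrizable locally convex spaces, and $V$ is metrizable by Proposition \ref{Prop:715}); and (c) making sure the identification of $E_m$ restricted to $F_{\theta'}$ as a genuine finite-rank operator into $V_\theta$ is legitimate, which it is since $E_m$ maps into the finite-dimensional space $L_m\subset V\subset V_\theta$ and is bounded $F_{\theta'}\to F_\theta$ by Lemma \ref{lem:681}(ii),(iv). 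Everything else is a routine geometric-series estimate.
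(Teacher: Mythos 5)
Your proposal is correct and follows essentially the same route as the paper: the key estimate $a_{N^m+1}(\iota)\le 3(\theta'/\theta)^m$ via Lemma \ref{lem:681}(iv) and $\rank E_m\le N^m$, the geometric-series summation under $\theta'<\theta/N$, the Pietsch criterion that summable approximation numbers imply nuclearity, and the seminorm characterization of nuclear spaces are exactly the ingredients of the paper's Lemma \ref{lem:343} and proof of Theorem \ref{thm:988}. The only cosmetic difference is that you work directly with the completions $V_{\theta'}\to V_{\theta}$ for a generating sequence of norms, whereas the paper proves the estimate for $F_{\theta}\to F_{\theta'}$ and then handles an arbitrary continuous seminorm $p$ by factoring $V_{\theta}\to\widehat{V}_{p}$ through a chain of continuous embeddings and invoking the ideal property of nuclear maps.
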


\begin{theorem}
\label{thm:311}
Let $R$ be the set of $\phi\in V$ such that $\phi$ is not cohomologous with any locally constant function, that is,
\begin{equation*}
\begin{aligned}
R=\{\phi\in V:\ &\phi-\varphi\neq\psi\circ\sigma_{{\bf A}}-\psi\ \mbox{for any}\\
&\mbox{locally constant function}\ \varphi\ \mbox{and continuous function}\ \psi\}.
\end{aligned}
\end{equation*}
Then, $R$ is a residual subset of $V$. 
In other words, there exists a sequence $V_1,V_2,\dots$ of open and dense subsets of $V$ such that $R\supset\bigcap_{m\in\N}V_m$. 
\end{theorem}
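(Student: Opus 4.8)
The plan is to exhibit the open dense sets $V_m$ directly and then verify $R \supset \bigcap_m V_m$. For a periodic point $\omega$ of period $q$, the quantity $\frac{1}{q} S_q\phi(\omega)$ depends only on the periodic orbit, and cohomology is detected by periodic orbits: if $\phi - \varphi = \psi\circ\sigma_{\bf A} - \psi$ then $S_q\phi(\omega) = S_q\varphi(\omega)$ for every $\omega \in \Per_q(\sigma_{\bf A})$. So if $\varphi \in L_n$ is locally constant, then $S_q\varphi(\omega)$ is determined by the word $\omega|(n+q-1)$ (or even less), and in particular any two periodic orbits that "look the same on length-$n$ windows" must carry the same Birkhoff sum. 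Using the aperiodicity of $\bf A$ (via Lemma \ref{lem:203}, which gives a row with two ones), I would, for each $n$, construct two distinct periodic orbits $\omega, \omega'$ of equal period whose symbol sequences agree on all windows of length $n$ but are genuinely different orbits, so that for a generic $\phi$ the sums $S_q\phi(\omega)$ and $S_q\phi(\omega')$ differ. The set $V_m$ will be the set of $\phi$ for which some such pair of orbits (with the defining combinatorial data indexed up to $m$) gives distinct Birkhoff sums; openness is clear since $S_q\phi$ depends continuously on $\phi$ (indeed through a finite-dimensional evaluation), and $\phi \mapsto S_q\phi(\omega) - S_q\phi(\omega')$ is a continuous linear functional on $V$, so its non-vanishing set is open.

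The key steps, in order, are as follows. First, I would pin down the periodic-orbit obstruction to cohomology with a locally constant function: if $\phi$ is cohomologous to some $\varphi \in L_n$, then for every $q$ and every pair $\omega,\omega' \in \Per_q(\sigma_{\bf A})$ with $\omega|(n+q) = \omega'|(n+q)$ followed by the periodicity constraint, $S_q\phi(\omega) = S_q\phi(\omega')$; more usefully, I would phrase this as: there is a fixed finite family of linear relations $\Lambda_n$ on the values $\{S_q\phi(\omega)\}$ such that $\phi$ cohomologous to an element of $L_n$ forces all of $\Lambda_n$ to hold. Second, using Lemma \ref{lem:203} I would build, for each $n$, an explicit $\bf A$-admissible closed path that can be "spliced" at a branch vertex (a state $i$ with ${\bf A}(ij_1) = {\bf A}(ij_2) = 1$, $j_1 \neq j_2$) to produce two distinct periodic words of a common period $q = q(n)$ agreeing on all length-$n$ subwords — the standard trick is to take two long $n$-separated excursions and concatenate them in the two possible orders. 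Third, I would set $V_m = \bigcup \{\phi \in V : S_{q}\phi(\omega) \neq S_{q}\phi(\omega')\}$, the union taken over the finitely many orbit pairs produced for parameters $\le m$, note it is open, and show it is dense: given $\phi_0 \in V$ and a neighborhood, I can perturb by a locally constant function supported on one of the relevant cylinders to break the equality $S_q\phi(\omega) = S_q\phi(\omega')$ while moving $\phi_0$ an arbitrarily small amount in every $\|\cdot\|_\theta$; since such perturbations lie in $\bigcup_m L_m \subset V$ and $V$ is a Baire space (Fréchet, Proposition \ref{Prop:715}), density follows. Fourth, I would check $\bigcap_m V_m \subset R$: if $\phi$ lies in every $V_m$, then for each $n$ at least one relation in $\Lambda_n$ fails, so $\phi$ is not cohomologous to any element of $L_n$; since $n$ is arbitrary, $\phi \in R$.

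The main obstacle I expect is the combinatorial construction in the second step: producing, for every $n$, two genuinely distinct periodic orbits of the same period whose symbolic sequences are indistinguishable on all length-$n$ windows, while controlling the period well enough that the resulting linear functional $\phi \mapsto S_q\phi(\omega) - S_q\phi(\omega')$ is not identically zero on $V$ (it must be non-trivial as a functional — e.g. the two orbits must visit some cylinder a different number of times, or visit different cylinders, in a way that a super-continuous — indeed a suitable locally constant — function can detect). Aperiodicity and Lemma \ref{lem:203} are exactly what make this possible: the branch vertex lets one interleave the same two "blocks" in two orders, and since the two resulting infinite sequences $w^\ast$-type concatenations differ in positions far apart, they agree on short windows but disagree globally, and a locally constant bump on a long cylinder separates their Birkhoff sums. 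Once this construction is in hand, the openness, density (via perturbation by locally constant functions and Baire category), and the inclusion $\bigcap_m V_m \subset R$ are routine. A secondary point requiring care is making precise the claim that cohomology with a fixed $L_n$ is equivalent to finitely many linear constraints on finitely many periodic Birkhoff sums — here one uses that $L_n$ is finite-dimensional and that the coboundary $\psi \circ \sigma_{\bf A} - \psi$ contributes nothing to any periodic sum, so $\phi$ cohomologous to $L_n$ iff the periodic-orbit averages of $\phi$ match those of \emph{some} element of the finite-dimensional space $L_n$, which is a finite system of linear conditions.
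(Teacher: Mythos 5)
Your overall framework is the same as the paper's: for each $n$ you want a continuous linear functional on $V$, built from Birkhoff sums over finitely many periodic points, which vanishes whenever $\phi$ is cohomologous to an element of $L_n$ but is not identically zero; its non-vanishing set is the open set $V_m$, density is obtained by adding a small multiple of the indicator of a long cylinder (a locally constant bump whose $\|\cdot\|_\theta$-norms can be made small for every $\theta$), and $\bigcap_m V_m\subset R$ follows from the easy direction of the cohomology/periodic-data relation. All of that matches the paper. However, the crucial step -- actually producing the functional -- is exactly where your proposal has a genuine gap. The one concrete mechanism you offer, ``take two long excursions $P,Q$ through the branch vertex and concatenate them in the two possible orders,'' does not work: $(PQ)^*$ and $(QP)^*$ are the \emph{same} periodic orbit (one is a shift of the other), and since $S_q\phi(\sigma_{{\bf A}}\omega)=S_q\phi(\omega)$ for every $\omega\in\Per_q(\sigma_{{\bf A}})$ and every function $\phi$, the functional $\phi\mapsto S_q\phi((PQ)^*)-S_q\phi((QP)^*)$ is identically zero on $V$, so the resulting ``$V_m$'' would be empty. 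You flag this construction as the main expected obstacle, but you do not resolve it, and the fix is precisely the content of the paper's Lemmas \ref{lem:414}, \ref{lem:711} and \ref{lem:611}: one compares concatenations with \emph{different junction patterns}, e.g.\ the additivity defect $\phi\mapsto S_{2|w|+|v|}\phi((wwv)^*)-S_{|w|}\phi(w^*)-S_{|w|+|v|}\phi((wv)^*)$ (equivalently, in your two-orbit language, something like $PPQQ$ versus $PQPQ$ for loops $P,Q$ at the branch vertex sharing a common prefix of length at least $n$), where $w=w^{(m)}$ is a long power of a loop $\tilde w$ and $v$ is a self-avoiding loop whose last symbol does not occur in $\tilde w$. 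These combinatorial conditions (Lemma \ref{lem:414}(ii), and $|w|\ge|v|$ as in (\ref{eq:331})) are not decorative: they are what guarantee that the perturbing indicator $1_{[wwv]}$ contributes to the long-orbit sum but not to the two short-orbit sums, which is what makes the density argument close.

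A secondary, more minor point: your claim that ``$\phi$ is cohomologous to an element of $L_n$ iff a finite system of linear conditions on finitely many periodic Birkhoff sums holds'' is both stronger than needed and not justified as stated -- the ``if'' direction is a Liv\v{s}ic-type statement, and matching the periodic data of \emph{some} element of the finite-dimensional space $L_n$ is a condition involving all periodic orbits, not finitely many. For the theorem only the easy implication is required (cohomologous $\Rightarrow$ the chosen relation holds), and this is all the paper uses, so you should phrase the first step that way and drop the ``iff.''
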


To prove Theorem \ref{thm:988}, we need the following lemma:

\begin{lemma}
\label{lem:343}
Let $\theta,\theta^{\prime}\in(0,1)$. If $\theta<\theta^{\prime}/N$, then the natural embedding $\iota:F_{\theta}\to F_{\theta^{\prime}}$ is nuclear.
\end{lemma}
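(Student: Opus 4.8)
The plan is to exhibit an explicit nuclear representation of the embedding $\iota\colon F_\theta\to F_{\theta^\prime}$ using the finite-rank operators $E_m$ from \eqref{def:228} as a telescoping approximation. First I would fix a point $\omega^{(w)}\in[w]$ for each admissible word $w$, so that $E_m\phi$ is a finite linear combination of the indicator functions $1_{[w]}$, $|w|=m$, with coefficients that are (averaged) point evaluations of $\phi$; in particular each $E_m$, viewed as an operator $F_\theta\to F_{\theta^\prime}$, is a finite-rank operator of the announced form $\phi\mapsto\sum_w \langle\phi,x_w^\prime\rangle\,1_{[w]}$ with $x_w^\prime\in F_\theta^\prime$ given by the normalized integral over $[w]$. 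Since $\phi=E_0\phi+\sum_{m\ge 0}(E_{m+1}\phi-E_m\phi)$ with convergence in $F_{\theta^\prime}$ (this is exactly the content of Lemma \ref{lem:681}(iv), which gives $\|\phi-E_m\phi\|_{\theta^\prime}\to0$ when $\theta<\theta^\prime$), it suffices to write each increment $E_{m+1}-E_m$ as a finite sum of rank-one operators and to control the resulting coefficients in $\ell^1$.

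The key computation is to bound $\|E_{m+1}\phi-E_m\phi\|_{\theta^\prime}$ and, more importantly, to express it through rank-one pieces with summable norms. Writing $E_{m+1}-E_m=(E_{m+1}-E_m)(\mathrm{id}-E_m)$ and using Lemma \ref{lem:681}(iv) on $\mathrm{id}-E_m$ together with Lemma \ref{lem:681}(ii) to see $E_{m+1}-E_m$ is norm-bounded on each piece, one gets $\|(E_{m+1}-E_m)\phi\|_{\theta^\prime}\le C\,(\theta/\theta^\prime)^m\,[\phi]_\theta$ for a constant $C$ independent of $m$. Now $E_{m+1}-E_m$ has rank at most $\rank E_{m+1}\le N^{m+1}$, so decomposing it into that many rank-one operators, each of operator norm at most $C(\theta/\theta^\prime)^m$, the total $\ell^1$-mass of the coefficients of the increment at level $m$ is at most $N^{m+1}\cdot C(\theta/\theta^\prime)^m = CN\,(N\theta/\theta^\prime)^m$. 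The hypothesis $\theta<\theta^\prime/N$ means $N\theta/\theta^\prime<1$, so summing over $m\ge0$ gives a finite geometric series; concatenating the rank-one representations of $E_0$ and of all the increments yields a nuclear representation $\iota\phi=\sum_{n}\lambda_n\langle\phi,x_n^\prime\rangle y_n$ with $\{\lambda_n\}$ summable, $\{x_n^\prime\}\subset F_\theta^\prime$ bounded (normalized integrals against a probability measure) and $\{y_n\}\subset F_{\theta^\prime}$ bounded (suitably normalized indicator functions $1_{[w]}$, noting $\|1_{[w]}\|_{\theta^\prime}\le 1+(\theta^\prime)^{-|w|}$, whose growth is absorbed into the $\lambda_n$).

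The main obstacle I anticipate is bookkeeping rather than conceptual: one must be careful that the $F_{\theta^\prime}$-norms of the chosen basis functions $1_{[w]}$ grow only like $(\theta^\prime)^{-|w|}$, and that this growth, multiplied by the number $N^{m+1}$ of words at level $m$ and by the decay $(\theta/\theta^\prime)^m$ coming from Lemma \ref{lem:681}(iv), still produces a convergent series — which is precisely why the sharp condition $\theta<\theta^\prime/N$ (rather than merely $\theta<\theta^\prime$) is needed. One resolves this by pushing the factor $(\theta^\prime)^{-|w|}$ into the summable scalars $\lambda_n$: the level-$m$ contribution to $\sum_n|\lambda_n|$ becomes $\lesssim N^{m+1}(\theta/\theta^\prime)^m(\theta^\prime)^{-(m+1)}\cdot(\theta^\prime)^{m+1}$ after renormalizing $y_n$ to unit-order norm, and a clean accounting shows the decisive ratio is indeed $N\theta/\theta^\prime<1$. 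Once the geometric estimate is in place, verifying that the three sequences meet the definition of a nuclear operator is routine.
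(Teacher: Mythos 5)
Your proposal is correct, but it proves nuclearity by a different route than the paper. The paper does not build a nuclear representation at all: it invokes the criterion (Pietsch, Proposition 2.3.11) that $\sum_{n\ge1}a_n(\iota:F_{\theta}\to F_{\theta^{\prime}})<\infty$ already implies nuclearity, and then gets $a_{N^m+1}(\iota)\le 3(\theta/\theta^{\prime})^m$ in one line from Lemma \ref{lem:681} (iv) together with $\rank E_m\le N^m$, so that the block sum over $N^m<n\le N^{m+1}$ is controlled by $(N\theta/\theta^{\prime})^m$. You instead verify the definition directly, telescoping $\phi=E_0\phi+\sum_{m\ge0}(E_{m+1}\phi-E_m\phi)$ in $F_{\theta^{\prime}}$ and splitting each increment into rank-one pieces; the quantitative inputs (the $(\theta/\theta^{\prime})^m$ decay from Lemma \ref{lem:681} (iv), the count $N^{m+1}$ of words, and the threshold $N\theta<\theta^{\prime}$) are the same, so the two arguments are morally parallel, with the paper's being shorter because the $\ell^1$-of-approximation-numbers criterion absorbs all the bookkeeping you do by hand, while yours is more self-contained in that it needs no operator-ideal machinery beyond the definition of a nuclear map. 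One step of yours deserves tightening: the assertion that a rank-$k$ operator splits into $k$ rank-one operators each of norm comparable to the whole is not automatic in general (it needs, e.g., an Auerbach basis of the range); here it is cleanest to bypass both that and the factorization $(E_{m+1}-E_m)(\mathrm{id}-E_m)$ by using the explicit formula $(E_{m+1}\phi-E_m\phi)=\sum_{|w|=m+1}c_w(\phi)1_{[w]}$, where $c_w(\phi)$ is the difference of the normalized integrals over $[w]$ and over its length-$m$ truncation, so that $|c_w(\phi)|\le\var_m(\phi)\le\theta^m[\phi]_{\theta}$ and $\|1_{[w]}\|_{\theta^{\prime}}\le1+(\theta^{\prime})^{-m}$ give each rank-one piece norm $O((\theta/\theta^{\prime})^m)$ directly; summing the $N^{m+1}$ pieces per level then yields the geometric series in $N\theta/\theta^{\prime}<1$ exactly as you indicate.
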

\begin{proof}
By \cite[Proposition 2.3.11]{Pietsch}, it is enough to show that $\sum_{n\ge1}a_{n}(\iota:F_{\theta}\to F_{\theta^{\prime}})$ converges. 
For $m\in\N$, Lemma \ref{lem:681} (iv) and the inequality $\rank E_m\le N^m$ imply that $a_{N^m+1}(\iota:F_{\theta}\to F_{\theta^{\prime}})\le3(\theta/\theta^{\prime})^m$. 
Thus, we have
\begin{align*}
\sum_{n>N}a_{n}(\iota:F_{\theta}\to F_{\theta^{\prime}})
\le&\sum_{m\ge1}\,\sum_{N^m<n\le N^{m+1}}a_{N^m+1}(\iota:F_{\theta}\to F_{\theta^{\prime}})\\
\le&3(N-1)\sum_{m\ge1}\left(\frac{N\theta}{\theta^{\prime}}\right)^m<\infty,
\end{align*}
as desired.
\end{proof}

\begin{proof}[Proof of Theorem \ref{thm:988}]
For $\theta\in(0,1)$, $V_{\theta}$ denotes the completion of $V$ by the norm $\|\cdot\|_{\theta}$. 
If $0<\theta<\theta^{\prime}<1$, then 
\begin{equation}
\label{eq:143}
F_{\theta}\subset V_{\theta^{\prime}}\subset F_{\theta^{\prime}}.
\end{equation}
Indeed, $V_{\theta^{\prime}}\subset F_{\theta^{\prime}}$ is obvious and $F_{\theta}\subset V_{\theta^{\prime}}$ follows from Lemma \ref{lem:681} (iv).

Let $p$ be a continuous seminorm on $V$ and $\widehat{V}_{p}$ the completion of $V/\ker p$ by $p$. 
There exist $\theta^{\prime\prime}\in(0,1)$ and $C>0$ such that $p(\cdot)\le C\|\cdot\|_{\theta^{\prime\prime}}$. 
We set $\theta^{\prime}=\theta^{\prime\prime}/2,\,\theta=\theta^{\prime}/(N+1)$. 
By (\ref{eq:143}), the natural embedding $\iota:V_{\theta}\to\widehat{V}_{p}$ can be decomposed as follows:
\[
\iota:V_{\theta}\xrightarrow{\iota_1}(F_{\theta},\|\cdot\|_{\theta})\xrightarrow{\iota_2}(F_{\theta^{\prime}},\|\cdot\|_{\theta^{\prime}})\xrightarrow{\iota_3}(F_{\theta^{\prime}},\|\cdot\|_{\theta^{\prime\prime}})\xrightarrow{\iota_4}\widehat{V}_{p}.
\]
Here, all $\iota_1,\iota_2,\iota_3,\iota_4$ are the natural embeddings. 
Note that $\iota_2$ is nuclear from Lemma \ref{lem:343} and $\iota_1,\iota_3,\iota_4$ are continuous. 
Thus, $\iota:V_{\theta}\to\widehat{V}_{p}$ is nuclear from \cite[Proposition III.47.1]{Traves}, and hence, $V$ is a nuclear space.
\end{proof}

Recall from Section \ref{sec:introduction} that, for $q\in\N,\ \phi:\Sigma_{{\bf A}}^+\to\C$ and $\omega\in\Sigma_{{\bf A}}^+$, we write $S_q\phi(\omega)=\sum_{k=0}^{q-1}\phi(\sigma_{{\bf A}}^k\omega)$.

To prove Theorem \ref{thm:311}, we need some lemmas.

\begin{lemma}
\label{lem:711}
Let $m\in\N$. 
Let ${\bf A}$-admissible words $w,v$ satisfy $|w|\ge m,\,w_{0}=v_{0}$ and ${\bf A}(w_{|w|-1}w_{0})={\bf A}(v_{|v|-1}v_{0})=1$. 
Then, for $\varphi\in L_m$, we have
\[
S_{2|w|+|v|}\varphi((wwv)^*)=S_{|w|}\varphi(w^*)+S_{|w|+|v|}\varphi((wv)^*).
\]
\end{lemma}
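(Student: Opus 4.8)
The plan is to expand both sides of the identity by the definition $S_{q}\varphi(\xi)=\sum_{k=0}^{q-1}\varphi(\sigma_{{\bf A}}^{k}\xi)$ and to match the $2|w|+|v|$ summands on the left, one by one, with the $|w|$ summands of $S_{|w|}\varphi(w^{*})$ and the $|w|+|v|$ summands of $S_{|w|+|v|}\varphi((wv)^{*})$. The only property of $\varphi$ that will be used is that $\var_{m}(\varphi)=0$, that is, $\varphi(\xi)$ depends only on $\xi_{0},\dots,\xi_{m-1}$; so for each relevant index it is enough to exhibit two points of $\Sigma_{{\bf A}}^{+}$ that agree on their first $m$ coordinates. (That $w^{*},(wv)^{*},(wwv)^{*}$ indeed lie in $\Sigma_{{\bf A}}^{+}$ follows from $w_{0}=v_{0}$ and ${\bf A}(w_{|w|-1}w_{0})={\bf A}(v_{|v|-1}v_{0})=1$ together with the ${\bf A}$-admissibility of $w$ and $v$.)

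For the indices $0\le k\le|w|-1$: both $(wwv)^{*}$ and $w^{*}$ begin with the word $ww$, so $\sigma_{{\bf A}}^{k}((wwv)^{*})$ and $\sigma_{{\bf A}}^{k}(w^{*})$ both begin with the word obtained from $ww$ by deleting its first $k$ letters, which has length $2|w|-k\ge|w|+1>m$. Hence these two points agree on their first $m$ coordinates and $\varphi(\sigma_{{\bf A}}^{k}((wwv)^{*}))=\varphi(\sigma_{{\bf A}}^{k}(w^{*}))$; summing over $0\le k\le|w|-1$ produces $S_{|w|}\varphi(w^{*})$.

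For the indices $|w|\le k\le 2|w|+|v|-1$, write $k=|w|+j$ with $0\le j\le|w|+|v|-1$, so that $\sigma_{{\bf A}}^{k}((wwv)^{*})=\sigma_{{\bf A}}^{j}(\sigma_{{\bf A}}^{|w|}((wwv)^{*}))$. A direct inspection of the periodic expansion of $(wwv)^{*}$ shows that $\sigma_{{\bf A}}^{|w|}((wwv)^{*})$ begins with the word $w_{0}\cdots w_{|w|-1}v_{0}\cdots v_{|v|-1}w_{0}\cdots w_{|w|-1}$, that is, with the word $wvw$; and $(wv)^{*}$ plainly begins with $wvw$ as well. Consequently $\sigma_{{\bf A}}^{j}(\sigma_{{\bf A}}^{|w|}((wwv)^{*}))$ and $\sigma_{{\bf A}}^{j}((wv)^{*})$ both begin with the word obtained from $wvw$ by deleting its first $j$ letters, of length $2|w|+|v|-j\ge|w|+1>m$; so they agree on their first $m$ coordinates and $\varphi(\sigma_{{\bf A}}^{k}((wwv)^{*}))=\varphi(\sigma_{{\bf A}}^{j}((wv)^{*}))$. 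Summing over $0\le j\le|w|+|v|-1$ produces $S_{|w|+|v|}\varphi((wv)^{*})$, and adding the two contributions gives the claim.

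There is no serious obstacle; the argument is a bookkeeping of word lengths. The point to get right is that the hypothesis $|w|\ge m$ is exactly what guarantees that the shared initial words ($ww$ truncated by at most $|w|-1$ letters, and $wvw$ truncated by at most $|w|+|v|-1$ letters) stay longer than $m$, so a function depending only on the first $m$ coordinates cannot tell the compared points apart; and that $\sigma_{{\bf A}}^{|w|}((wwv)^{*})$ is correctly identified, for which the short explicit computation indicated above suffices.
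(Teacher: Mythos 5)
Your proof is correct and follows essentially the same route as the paper: both arguments reduce the identity to the fact that $\varphi\in L_m$ cannot distinguish points agreeing in their first $m$ coordinates, and both exploit $|w|\ge m$ to guarantee the shared prefixes ($ww$, resp.\ $wvw$) remain longer than $m$ after the relevant shifts. The only difference is organizational: the paper splits $S_{2|w|+|v|}$ into three blocks via the cocycle identity and recombines the last two, while you match the terms directly in two blocks; this is not a substantive difference.
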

\begin{proof}
By $S_{p+q}\varphi(\omega)=S_p\varphi(\omega)+S_q\varphi(\sigma_{{\bf A}}^p\omega)$, we have
\[
S_{2|w|+|v|}\varphi((wwv)^*)=S_{|w|}\varphi((wwv)^*)+S_{|w|}\varphi(wv(wwv)^*)+S_{|v|}\varphi(v(wwv)^*).
\]
Notice that $S_{|w|}\varphi((wwv)^*)=\sum_{k=0}^{|w|-1}\varphi(w_{k}\cdots w_{|w|-1}wv(wwv)^*)$. 
Since $\varphi\in L_m$ and $(w_{k}\cdots w_{|w|-1}wv(wwv)^*)|m=(w_{k}\cdots w_{|w|-1}w^*)|m$ for $0\le k\le|w|-1$, we have $\varphi(w_{k}\cdots w_{|w|-1}wv(wwv)^*)=\varphi(w_{k}\cdots w_{|w|-1}w^*)$ for $0\le k\le|w|-1$. Hence, $S_{|w|}\varphi((wwv)^*)=S_{|w|}\varphi(w^*)$. 
Similarly, $S_{|w|}\varphi(wv(wwv)^*)=S_{|w|}\varphi((wv)^*)$ and $S_{|v|}\varphi(v(wwv)^*)=S_{|v|}\varphi(\sigma_{{\bf A}}^{|w|}(wv)^*)$.
Therefore, we have 
\begin{align*}
S_{2|w|+|v|}\varphi((wwv)^*)=&S_{|w|}\varphi(w^*)+S_{|w|}\varphi((wv)^*)+S_{|v|}\varphi(\sigma_{{\bf A}}^{|w|}(wv)^*)\\
=&S_{|w|}\varphi(w^*)+S_{|w|+|v|}\varphi((wv)^*),
\end{align*}
as desired.
\end{proof}

Here is a key lemma.

\begin{lemma}
\label{lem:414}
There exist ${\bf A}$-admissible words $\tilde{w},v$ such that the following two assertions hold:
\begin{itemize}
\item[(i)]$\tilde{w}_0=v_0,\ {\bf A}(\tilde{w}_{|\tilde{w}|-1}\tilde{w}_0)={\bf A}(v_{|v|-1}v_0)=1$ and $|\tilde{w}|\ge|v|$.
\item[(ii)]$v$ is self-avoiding and $v_{|v|-1}\notin\{\tilde{w}_k:0\le k\le|\tilde{w}|-1\}$.
\end{itemize}
\end{lemma}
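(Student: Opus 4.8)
The plan is to phrase everything in terms of the directed graph $G$ on the vertex set $\{1,\dots,N\}$ with an edge $p\to q$ exactly when ${\bf A}(pq)=1$. Then an ${\bf A}$-admissible word is a walk of length $\ge 2$ in $G$, a self-avoiding word is a simple (vertex-distinct) path, and the closure conditions ${\bf A}(\tilde w_{|\tilde w|-1}\tilde w_0)={\bf A}(v_{|v|-1}v_0)=1$ say that, writing $i:=\tilde w_0=v_0$, the word $\tilde w$ is a closed walk based at $i$ while $v$ is a path from $i$ ending at an in-neighbour $j:=v_{|v|-1}$ of $i$. So the task reduces to exhibiting a vertex $i$, an in-neighbour $j\neq i$ of $i$, a simple cycle $\Gamma$ through $i$ none of whose letters equals $j$, and a simple path $v$ from $i$ to $j$; one then lets $\tilde w$ be $\Gamma$ traversed $n$ times with $n$ large enough that $|\tilde w|\ge|v|$, and assertions (i), (ii) follow at once.

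Two elementary consequences of aperiodicity enter. First, $G$ is strongly connected (a power of ${\bf A}$ is strictly positive), so any two vertices are joined by a simple path. Second, every vertex $i$ has an in-neighbour other than $i$: otherwise column $i$ of ${\bf A}$ equals $e_i$, hence $({\bf A}^k)_{\ell i}={\bf A}_{\ell i}=0$ for all $\ell\neq i$ and all $k$, contradicting aperiodicity since $N\ge 2$. Moreover, by Lemma \ref{lem:203} some column of ${\bf A}$ has at least two entries equal to $1$, i.e.\ some vertex has in-degree $\ge 2$.

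I would then split into two cases. If some vertex $i$ carries a self-loop ${\bf A}(ii)=1$, choose an in-neighbour $j\neq i$ of $i$, take $\Gamma$ to be that self-loop (so $\tilde w=i\cdots i$) and $v$ a simple path from $i$ to $j$; the only letter of $\tilde w$ is $i$, which differs from $j$, so (ii) holds. If $G$ has no self-loop, pick a vertex $i$ of in-degree $\ge 2$ and let $\Gamma$ be a \emph{shortest} closed walk through $i$; minimality forces $\Gamma$ to be a simple cycle, of length $k\ge 2$, entering $i$ along some edge $\ell\to i$ with $\ell\neq i$. Since $i$ has at least two in-neighbours (all of them $\neq i$, as there are no self-loops), pick an in-neighbour $j$ of $i$ with $j\neq\ell$. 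The crucial point is that $j$ is not a letter of $\Gamma$: if $\Gamma$ were $i=w_0\to w_1\to\cdots\to w_{k-1}=\ell\to w_0=i$ with $j=w_p$ for some $1\le p\le k-1$, then $j\neq\ell$ gives $p\le k-2$, and $w_0\to\cdots\to w_p\to w_0$ — a legitimate closed walk through $i$, since ${\bf A}(ji)=1$ — has length $p+1\le k-1<k$, contradicting minimality. Taking $v$ a simple path from $i$ to $j$ and $\tilde w=\Gamma$ repeated $n$ times with $nk\ge|v|$, all clauses of (i) and (ii) are then direct to check.

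The one genuine difficulty is arranging that a cycle through $i$ can dodge a prescribed in-neighbour $j$ of $i$; the shortest-cycle argument makes this automatic once a vertex of in-degree $\ge 2$ is available, and that is exactly what Lemma \ref{lem:203} supplies via aperiodicity. The self-loop situation must be peeled off separately only because there a ``cycle through $i$'' can have length $1$, so the shortcut step degenerates.
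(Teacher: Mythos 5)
Your proposal is correct and takes essentially the same route as the paper: choose a vertex $i$ of in-degree at least two via Lemma \ref{lem:203}, take a shortest closed walk through $i$, use minimality (the shortcut through the edge $j\to i$) to show that a second in-neighbour $j$ of $i$ is not a letter of that cycle, join $i$ to $j$ by a self-avoiding path $v$, and let $\tilde w$ be the cycle repeated often enough that $|\tilde w|\ge|v|$. The only difference is organizational: you treat the self-loop case separately, while the paper folds it into the same minimality argument when showing $j_2\neq\bar w_0$.
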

\begin{proof}
From Lemma \ref{lem:203}, there exists $i\in\{1,\dots,N\}$ such that the $i$-th column has more than two entries which are equal to one.
Take an ${\bf A}$-admissible word $\bar{w}$ so that $\bar{w}_0=\bar{w}_{|\bar{w}|-1}=i$ and
\begin{equation}
\label{eq:039}
|\bar{w}|=\min\{|w|:w\ \mbox{is}\ {\bf A}\mbox{-admissible},\ w_0=w_{|w|-1}=i\}.
\end{equation}

Let $j_1=\bar{w}_{|\bar{w}|-2}$. 
There exists $j_2\in\{1,\dots,N\}$ such that $j_2\neq j_1$ and ${\bf A}(j_2i)=1$. 
We prove the following assertion:
\begin{equation}
\label{eq:167}
j_2\notin\{\bar{w}_k:0\le k\le|\bar{w}|-2\}.
\end{equation}
First, $j_2\neq\bar{w}_{|\bar{w}|-2}$ since $\bar{w}_{|\bar{w}|-2}=j_1$.
Next, we show that $j_2\neq\bar{w}_0$. 
Recall $\bar{w}_0=i$. 
We assume $j_2=i$. 
Then, ${\bf A}(ii)=1$, and hence, from (\ref{eq:039}), $\bar{w}=ii$. 
Therefore, we have the contradiction $j_1=i=j_2$.
Finally, we show that $j_2\neq\bar{w}_k$ for any $k\in\{1,\dots,|\bar{w}|-3\}$.
We assume $ j_2=\bar{w}_k$ for some $k\in\{1,\dots,|\bar{w}|-3\}$.
Then, the word $w=\bar{w}_0\bar{w}_{1}\cdots\bar{w}_k\,i$ is ${\bf A}$-admissible and satisfies $w_0=w_{|w|-1}=i$. 
Moreover, $|w|=k+2<|\bar{w}|$. 
Thus, by (\ref{eq:039}), we have a contradiction, and (\ref{eq:167}) follows.

We prove the lemma.
Take an ${\bf A}$-admissible word $v$ so that $v$ is self-avoiding and $v_0=i,\ v_{|v|-1}=j_2$.
Moreover, take $N\in\N$ so that $N(|\bar{w}|-1)\ge|v|$ and set
\[
\tilde{w}=\overbrace{(\bar{w}_0\cdots\bar{w}_{|\bar{w}|-2})(\bar{w}_0\cdots\bar{w}_{|\bar{w}|-2})\cdots(\bar{w}_0\cdots\bar{w}_{|\bar{w}|-2})}^{\text{$N$ times}}.
\]
Then, these $\tilde{w},v$ satisfy the desired properties.
\end{proof}

Let $\tilde{w},v$ be as in Lemma \ref{lem:414}.
For $m\in\N$, we write
\[
w^{(m)}=\overbrace{\tilde{w}\,\tilde{w}\cdots\tilde{w}}^{\text{$m$ times}}.
\]

\begin{lemma}
\label{lem:611}
Let $m\in\N$ and let $\phi:\Sigma_{{\bf A}}^+\to\C$ be cohomologous with an $m$-locally constant function. 
Then, we have
\begin{equation}
\label{eq:446}
S_{2|w^{(m)}|+|v|}\phi((w^{(m)}w^{(m)}v)^*)=S_{|w^{(m)}|}\phi((w^{(m)})^*)+S_{|w^{(m)}|+|v|}\phi((w^{(m)}v)^*).
\end{equation}
\end{lemma}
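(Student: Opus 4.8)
The plan is to reduce the identity to the locally constant case already handled in Lemma \ref{lem:711}, exploiting the fact that a coboundary has vanishing Birkhoff sums along every periodic orbit. By the definition of ``cohomologous with an $m$-locally constant function'' we may write $\phi=\varphi+\psi\circ\sigma_{{\bf A}}-\psi$ for some $\varphi\in L_m$ and some continuous $\psi:\Sigma_{{\bf A}}^+\to\C$. For any $q\in\N$ and any $\omega\in\Sigma_{{\bf A}}^+$ with $\sigma_{{\bf A}}^q\omega=\omega$ one has $S_q(\psi\circ\sigma_{{\bf A}}-\psi)(\omega)=\sum_{k=0}^{q-1}\bigl(\psi(\sigma_{{\bf A}}^{k+1}\omega)-\psi(\sigma_{{\bf A}}^{k}\omega)\bigr)=\psi(\sigma_{{\bf A}}^q\omega)-\psi(\omega)=0$.

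First I would check that the three points $(w^{(m)}w^{(m)}v)^*$, $(w^{(m)})^*$ and $(w^{(m)}v)^*$ are legitimate elements of $\Sigma_{{\bf A}}^+$ and are periodic with periods dividing $2|w^{(m)}|+|v|$, $|w^{(m)}|$ and $|w^{(m)}|+|v|$, respectively. This is immediate from the construction in Lemma \ref{lem:414}: $w^{(m)}=\tilde{w}\cdots\tilde{w}$ is $\bf A$-admissible because $\tilde w$ is $\bf A$-admissible and ${\bf A}(\tilde w_{|\tilde w|-1}\tilde w_0)=1$; moreover $w^{(m)}_0=\tilde w_0=v_0$ and ${\bf A}(v_{|v|-1}v_0)=1$, so all the required concatenations close up. Combining this with the vanishing observed above, each of the three Birkhoff sums appearing in (\ref{eq:446}) is unchanged when $\phi$ is replaced by $\varphi$. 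Hence it suffices to establish (\ref{eq:446}) with $\varphi$ in place of $\phi$.

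For the latter I would apply Lemma \ref{lem:711} with $w=w^{(m)}$ and with the word $v$ of Lemma \ref{lem:414}. Its hypotheses hold: $w^{(m)}$ and $v$ are $\bf A$-admissible; $|w^{(m)}|=m|\tilde w|\ge m$; $w^{(m)}_0=\tilde w_0=v_0$; and ${\bf A}(w^{(m)}_{|w^{(m)}|-1}\,w^{(m)}_0)={\bf A}(\tilde w_{|\tilde w|-1}\tilde w_0)=1$ together with ${\bf A}(v_{|v|-1}v_0)=1$, all by Lemma \ref{lem:414}(i). Lemma \ref{lem:711} then yields precisely the identity $S_{2|w^{(m)}|+|v|}\varphi((w^{(m)}w^{(m)}v)^*)=S_{|w^{(m)}|}\varphi((w^{(m)})^*)+S_{|w^{(m)}|+|v|}\varphi((w^{(m)}v)^*)$, which, by the reduction above, is (\ref{eq:446}). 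There is no genuine obstacle here; the only point requiring care is the routine verification that the three iterated words define points of $\Sigma_{{\bf A}}^+$ with the stated periods and that the hypotheses of Lemma \ref{lem:711} are met, both of which follow directly from the properties of $\tilde w$ and $v$ recorded in Lemma \ref{lem:414}.
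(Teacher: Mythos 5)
Your proof is correct and takes essentially the same route as the paper's: reduce (\ref{eq:446}) to the $m$-locally constant function $\varphi$ by observing that the coboundary $\psi\circ\sigma_{{\bf A}}-\psi$ has vanishing Birkhoff sums over the three periodic points, then apply Lemma \ref{lem:711} with $w=w^{(m)}$ and the $v$ of Lemma \ref{lem:414}. The only cosmetic difference is that the paper invokes Lemma \ref{lem:414}(ii) for the exact period of $(w^{(m)}w^{(m)}v)^*$, whereas your telescoping argument needs only $\sigma_{{\bf A}}^q\omega=\omega$, which is slightly more direct.
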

\begin{proof}
We write $w=w^{(m)}$ for the sake of simplicity.
Let $\phi$ be cohomologous with $\varphi\in L_m$. 
Lemma \ref{lem:414} (ii) implies that the periodic point $(wwv)^*$ has period $2|w|+|v|$. 
Thus, we have $S_{2|w|+|v|}\phi((wwv)^*)=S_{2|w|+|v|}\varphi((wwv)^*)$. 
Similarly, we have $S_{|w|}\phi(w^*)=S_{|w|}\varphi(w^*)$ and $S_{|w|+|v|}\phi((wv)^*)=S_{|w|+|v|}\varphi((wv)^*)$. 
Thus, we obtain (\ref{eq:446}) by Lemma \ref{lem:711}.
\end{proof}

We are ready to prove Theorem \ref{thm:311}.

\begin{proof}[Proof of Theorem \ref{thm:311}]
For $m\in\N$, we set
\[
V_m=\{\phi\in V\>|\>\phi\ \mbox{does not satisfy (\ref{eq:446})}\}.
\]
Lemma \ref{lem:611} implies that $R\supset\bigcap_{m\in\N}V_m$. 
Thus, it is enough to show that, for $m\in\N$, $V_m$ is an open and dense subset of $V$. 

We write $w=w^{(m)}$. 
Then, the two maps $\phi\mapsto S_{2|w|+|v|}\phi((wwv)^*)$ and $\phi\mapsto S_{|w|}\phi(w^*)+S_{|w|+|v|}\phi((wv)^*)$
from $V$ to $\C$ are continuous, and hence, $V_m$ is open.

We prove the denseness. 
Let $\phi\in V\setminus V_m$. 
We write $\epsilon_n=(1/n)1_{[wwv]}$ for $n\in\N$. 
Then, for $\theta\in(0,1)$, we have $\|\epsilon_n\|_{\theta}\le(1/n)(1+2/\theta^{2|w|+|v|-1})$, and hence, $\|(\phi+\epsilon_n)-\phi\|_{\theta}\to0$ as $n\to\infty$.
Thus, $\phi+\epsilon_n\to\phi\>\text{in}\>V$ as $n\to\infty$. 
Therefore, it is enough to prove
\begin{equation}
\label{eq:415}
\phi+\epsilon_n\in V_m,\quad n\in\N.
\end{equation} 
We easily have $S_{|w|}\epsilon_{n}(w^*)=0$.
We show that $S_{|w|+|v|}\epsilon_{n}((wv)^*)=0$. 
It is enough to show that 
\begin{equation}
\label{eq:331}
\sigma_{{\bf A}}^k(wv)^*|(2|w|-1)\neq wwv,\quad0\le k\le|w|+|v|-1. 
\end{equation}
Let $0\le k\le|w|+|v|-1$. 
Then, $(\sigma_{{\bf A}}^{k}(wv)^*)_{|w|+|v|-1-k}=(wv)^*_{|w|+|v|-1}=v_{|v|-1}$.
On the other hand, $|w|+|v|-1-k\le2|w|-1$
from $|w|\ge|\tilde{w}|\ge|v|$, and hence, $(wwv)_{|w|+|v|-1-k}\neq v_{|v|-1}$.
Thus, (\ref{eq:331}) holds.

We prove (\ref{eq:415}). 
We have $S_{2|w|+|v|}(\phi+\epsilon_n)((wwv)^*)\neq S_{2|w|+|v|}\phi((wwv)^*)$ since $S_{2|w|+|v|}\epsilon_n((wwv)^*)\ge1/n>0$.
Lemma \ref{lem:611} and $S_{|w|}\epsilon_{n}(w^*)=S_{|w|+|v|}\epsilon_{n}((wv)^*)=0$ imply $S_{2|w|+|v|}\phi((wwv)^*)=S_{|w|}(\phi+\epsilon_n)(w^*)+S_{|w|+|v|}(\phi+\epsilon_n)((wv)^*)$.
Thus, $\phi+\epsilon_n$ does not satisfy (\ref{eq:446}), and we obtain (\ref{eq:415}).
\end{proof}


\section{Asymptotic behavior of the eigenvalues}
\label{appendix:eigenvalues}

In this appendix, we give the following asymptotic behavior of eigenvalues of transfer operators:

\begin{theorem}
\label{thm:885}
Let $\{\theta_m\}$ satisfy (\ref{cond:227}) and assume that $\theta_m>0$ for any $m\in\N$.
Let $C_2$ be as in Lemma \ref{lem:216}. 
Let $\alpha>0$ and $R>e^{h_{\mathrm{top}}(\sigma_{{\bf A}})}$. 
Then, there exist $C_5>0$ and $M\in\N$, depending only on $b_1,b_2,\alpha$ and $R$, such that the following inequality holds for $m\ge M$ and $g\in V$ satisfying (\ref{condi:029}):
\[
\#\{n\in\N:|\lambda_n(g)|>(C_2+1)\theta_{m}\}\le C_5\theta_{m}^{-\alpha}R^{m-1}.
\]
\end{theorem}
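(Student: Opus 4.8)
The plan is to control the approximation numbers of $\mathscr{L}_g$ and its powers on $\mcB$ by means of Lemmas~\ref{lem:216} and \ref{lem:072}, and then to convert these estimates into a bound on the number of large eigenvalues via the Weyl inequality in Banach spaces. Put $d_j=\rank E_j$; recall from the proof of Lemma~\ref{lem:007} that $d_j^{1/j}\to e^{h_{\mathrm{top}}(\sigma_{{\bf A}})}$, so $d_{m-1}\le R^{m-1}$ once $m$ is large enough in terms of $R$. Fix $g\in V$ satisfying (\ref{condi:029}). For $m$ large we have $\theta_m\le1$, the operator $\mathscr{L}_g:\mcB\to\mcB$ is compact (by (\ref{ineq:094}), since $\|\mathscr{L}_g-K_{g,m}\|_{\mcB\to\mcB}\to0$), and $\bigcup_{k\ge0}L_k\subset\mcB$ by Lemma~\ref{lem:293}~(ii) since all $\theta_k>0$. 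Arguing exactly as in the proof of Theorem~\ref{thm:919}~(ii) (which uses only the compactness of $\mathscr{L}_g$ on $\mcB$, the density of $\bigcup_kL_k$ in each $V_\theta$, and Pollicott's bound $r_{\mathrm{ess}}(\mathscr{L}_g:V_\theta\to V_\theta)\le\theta e^{P(\Re g)}$), the nonzero eigenvalues of $\mathscr{L}_g$ on $V$ coincide, with multiplicities, with those of $\mathscr{L}_g$ on $\mcB$; hence it suffices to bound $\#\{n:|\lambda_n(g)|>(C_2+1)\theta_m\}$, where $\lambda_n(g)$ now denotes the $n$-th eigenvalue of the compact operator $\mathscr{L}_g:\mcB\to\mcB$.

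Fix $q\in\N$. Applying (\ref{ineq:094}) with $m$ replaced by $m-1$, together with Lemma~\ref{lem:072}~(iii) (so $\rank K^{(q)}_{g,m-1}\le q\,d_{m-1}$), yields $a_n(\mathscr{L}_g^q)\le\|\mathscr{L}_g^q-K^{(q)}_{g,m-1}\|_{\mcB\to\mcB}\le C_2^q\theta_m^q$ for $n>q\,d_{m-1}$, whereas $a_n(\mathscr{L}_g^q)\le\|\mathscr{L}_g^q\|_{\mcB\to\mcB}\le C_2^q$ for all $n$ by (\ref{ineq:278}). Feeding these into the Weyl inequality \cite[Theorem~2.a.6]{Konig}, $\prod_{k=1}^n|\lambda_k(T)|\le c_W^{\,n}\prod_{k=1}^na_k(T)$ (a universal constant $c_W$, every compact $T$), applied to $T=\mathscr{L}_g^q$ with the product split at $k=q\,d_{m-1}$, gives for $n>q\,d_{m-1}$
\[
|\lambda_n(g)|^{qn}=|\lambda_n(\mathscr{L}_g^q)|^n\le c_W^{\,n}\,(C_2^q)^{q\,d_{m-1}}\,(C_2^q\theta_m^q)^{\,n-q\,d_{m-1}},
\]
hence $|\lambda_n(g)|\le c_W^{1/q}\,C_2\,\theta_m^{\,1-q\,d_{m-1}/n}$, so that $|\lambda_n(g)|\le(C_2+1)\theta_m$ whenever $\theta_m^{-q\,d_{m-1}/n}\le(1+1/C_2)\,c_W^{-1/q}$.

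The crux is that the gain factor $(1+1/C_2)\,c_W^{-1/q}$ must exceed $1$: the gap between the target level $(C_2+1)\theta_m$ and the tail level $C_2\theta_m$ is only a factor $1+1/C_2$, which is in general smaller than the Weyl constant $c_W$. This is the main obstacle, and it is resolved by raising $\mathscr{L}_g$ to a power. Let $q_0$ be the least positive integer with $(1+1/C_2)^{q_0}>c_W$ (it exists since $1+1/C_2>1$), and put $\gamma:=(1+1/C_2)\,c_W^{-1/q_0}>1$; both $q_0$ and $\gamma$ depend only on $C_2$, hence only on $b_1,b_2$. Taking $q=q_0$, the condition above reads $\theta_m^{-q_0\,d_{m-1}/n}\le\gamma$, i.e.\ $n\ge q_0\,d_{m-1}\log(1/\theta_m)/\log\gamma$; together with $n>q_0\,d_{m-1}$ and $d_{m-1}\ge1$ this gives, for $m$ large,
\[
\#\{n:|\lambda_n(g)|>(C_2+1)\theta_m\}\le q_0\,d_{m-1}\Bigl(1+\frac{\log(1/\theta_m)}{\log\gamma}\Bigr).
\]

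To finish I would invoke two elementary facts: the map $t\mapsto t^{\alpha}\bigl(1+\log(1/t)/\log\gamma\bigr)$ extends continuously to $[0,1]$ with value $0$ at $t=0$, hence is bounded on $(0,1]$ by a constant $C'$ depending only on $\alpha$ and $\gamma$ (thus on $\alpha,b_1,b_2$), giving $1+\log(1/\theta_m)/\log\gamma\le C'\theta_m^{-\alpha}$; and $d_{m-1}\le R^{m-1}$ for $m\ge M$ with $M$ depending only on $R$. These yield the assertion with $C_5=q_0C'$ and this $M$ (enlarged if needed so that $\theta_m\le1/\gamma$ and $m\ge2$ whenever $m\ge M$). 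The remaining work is routine: the two boundedness statements and the bookkeeping of the ``$m$ large'' thresholds.
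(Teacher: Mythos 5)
Your proof is correct, but it takes a genuinely different route from the paper. The paper performs the same reduction to $\mcB$ (by citing Theorem \ref{thm:919}(ii)) and then invokes the eigenvalue-counting estimate of Demuth, Hanauska, Hansmann and Katriel \cite[Corollary 4.3]{Demuth et al JFA 2015} applied to the pair $(\mathscr{L}_g,K_{g,m-1})$: since $(C_2+1)\theta_m-\|\mathscr{L}_g-K_{g,m-1}\|_{\mcB\to\mcB}\ge\theta_m$ by (\ref{ineq:094}), that result bounds the number of eigenvalues of modulus greater than $(C_2+1)\theta_m$ by $C(\alpha)(C_2+1)\theta_m^{-\alpha}\sum_{k}a_k(K_{g,m-1})^{\alpha}$, and this sum is at most $R^{m-1}(4C_2)^{\alpha}$ because $\rank K_{g,m-1}\le\rank E_{m-1}\le R^{m-1}$ and $\|K_{g,m-1}\|_{\mcB\to\mcB}\le4C_2$ by (\ref{ineq:278}) and Lemma \ref{lem:046}(iii). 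You avoid this black box entirely and use only the K\"{o}nig--Weyl inequality applied to a fixed power $\mathscr{L}_g^{q_0}$, with $q_0$ chosen so that $(1+1/C_2)^{q_0}$ beats the universal constant; this is a clean way around the genuine obstacle you identify, namely that the gap between the tail level $C_2\theta_m$ and the target level $(C_2+1)\theta_m$ is too small for $q=1$. What each approach buys: the paper's proof is shorter given the imported result; yours is more self-contained (only the classical inequality the paper already cites in the introduction) and your intermediate bound $q_0\rank E_{m-1}\bigl(1+\log(1/\theta_m)/\log\gamma\bigr)$ is actually sharper in its $\theta_m$-dependence (logarithmic rather than $\theta_m^{-\alpha}$) before you weaken it to the stated form. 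Two small points to tidy: \cite[Theorem 2.a.6]{Konig} is usually stated in the form $|\lambda_n(T)|\le\sqrt{2e}\,\bigl(\prod_{k=1}^{n}a_k(T)\bigr)^{1/n}$ rather than the product form you quote, but that is precisely the inequality your displayed step uses (with $c_W=\sqrt{2e}$), so nothing breaks; and you should justify in a line that $|\lambda_n(\mathscr{L}_g^{q})|=|\lambda_n(g)|^{q}$ with matching multiplicities (spectral mapping for compact operators). Your remark that the proof of Theorem \ref{thm:919}(ii) applies verbatim to $g$ satisfying (\ref{condi:029}) with $b_2$ possibly larger than $1$ is welcome extra care; the paper glosses over this by simply citing that theorem.
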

\begin{proof}
We write $\mcB=\mcB(\{\theta_m\})$.
Take $M\in\N$ so that $\theta_{m}\le1$ and $\rank E_{m-1}\le R^{m-1}$ for $m\ge M$. 
For $s>0$, we denote by $N_m(s)$ the number of eigenvalues $\lambda$ of $\mathscr{L}_{g}:\mcB\to\mcB$ with $|\lambda|>s$, where each $\lambda$ is counted according to its multiplicity. 
Then, by Theorem \ref{thm:919} (ii), $\#\{n\in\N:|\lambda_n(f)|>(C_2+1)\theta_{m}\}=N_m((C_2+1)\theta_{m})$. 
We see by (\ref{ineq:094}) that $(C_2+1)\theta_{m}-\|\mathscr{L}_{g}-K_{g,m-1}\|_{\mcB\to\mcB}\ge\theta_m$. 
Thus, by \cite[Corollary 4.3]{Demuth et al JFA 2015}, there exists $C(\alpha)>0$ such that the following inequality holds:
\[
N_m((C_2+1)\theta_m)\le
C(\alpha)(C_2+1)\theta_{m}^{-\alpha}\sum_{k=1}^{\infty}a_{k}(K_{g,m-1})^{\alpha}.
\]
Since $a_k(K_{g,m-1})=0$ for $k>\rank K_{g,m-1}$ and $\rank K_{g,m-1}\le\rank E_{m-1}$, we see that $\sum_{k=1}^{\infty}a_{k}(K_{g,m-1})^{\alpha}\le R^{m-1}\|K_{g,m-1}\|_{\mcB\to\mcB}^{\alpha}$.
From (\ref{ineq:278}) and Lemma \ref{lem:046} (iii), $\|K_{g,m-1}\|_{\mcB\to\mcB}\le4C_2$. 
Thus, the assertion holds for $C_5=(C_2+1)(4C_2)^{\alpha}C(\alpha)$.
\end{proof}


\section*{Acknowledgments}
The author would like to thank Akihiko Inoue (Hiroshima University) and Yushi Nakano (Tokai University) for helpful advices and comments. 
The author is also grateful to the anonymous reviewer for his/her careful reading and many insightful comments and suggestions.
The construction of the Banach space $\mcB$ on which the transfer operator is compact is based on the reviewer's idea.
The author is supported by FY2019 Hiroshima University Grant-in-Aid for Exploratory Research (The researcher support of young Scientists).


\end{document}